\newtheorem{theorem}{Theorem}[section]
\newtheorem{proposition}[theorem]{Proposition}
\newtheorem{lemma}[theorem]{Lemma}
\newtheorem{corollary}[theorem]{Corollary}
\theoremstyle{definition}
\newtheorem{definition}{Definition}
\newtheorem*{notation}{Notation}
\theoremstyle{remark}
\newtheorem*{remark}{Remark}
\newtheorem*{example}{Example}
\theoremstyle{plain}
\newtheorem{ax}{Axiom}
\newtheorem*{ax*}{Axiom}
\newtheorem*{conj}{Conjecture}
\newcommand{\N}{\mathbb{N}}
\newcommand{\Z}{\mathbb{Z}}
\newcommand{\C}{\mathbb{C}}
\DeclareMathOperator{\Fix}{Fix}
\DeclareMathOperator{\id}{id}
\newcommand{\q}[1]{\mathscr{Q}_{#1}}
\newcommand{\Q}{\mathscr{Q}}
\newcommand{\jac}{\mathcal{J}}
\newcommand{\tensor}{\otimes}
\newcommand{\FJRW}{\mathscr{H}}
\newcommand{\h}[2]{\FJRW_{#1, #2}}
\DeclareMathOperator{\hess}{hess}
\newcommand{\lb}{l}
\newcommand{\1}{\mathbf{1}}
\newcommand{\set}[1]{\left\{ #1 \right\}}
\newcommand{\br}[1]{\left\langle #1 \right\rangle}
\newcommand{\ket}[1]{\left|\,#1\right>}
\newcommand{\bs}[1]{\boldsymbol{#1}}
\newcommand{\parlengths}{\setlength{\parindent}{0pt}}
\begin{document}

\date{\today}
\title{FJRW-rings and Landau-Ginzburg Mirror Symmetry}

\author{Marc Krawitz}
\thanks{Partially Supported by the National Research Foundation of South Africa}

\address{Department of Mathematics, University of Michigan, Ann Arbor, MI, USA}
\email{mkrawitz@umich.edu}

\begin{abstract}
{In this article, we study the Berglund--H\"ubsch transpose  construction $W^T$ for invertible quasihomogeneous potential $W$. We introduce the
dual group $G^T$ and establish the state space isomorphism between the Fan--Jarvis--Ruan--Witten $A$-model of $W/G$ and the orbifold Milnor ring $B$-model of $W^T/G^T$. Furthermore, we prove a mirror symmetry theorem at the level of Frobenius algebra structure
 for $G^\text{max}$. Then, we interpret Arnol'd strange duality of exceptional singularities $W$ as mirror symmetry between $W/\br{J}$ and its strange dual $W^{SD}$.}
 \end{abstract}
\maketitle\parlengths
\tableofcontents
\section{Introduction}\label{sec:Introduction}

During the last twenty years, mirror symmetry has been a driving force for some of the developments in geometry and physics.  In this article, we add to this development a version of mirror symmetry purely in the Landau-Ginzburg / singularity setting, i.e. we produce a mirror LG theory to a given LG theory.  This version of mirror symmetry is inspired by an early proposal of Berglund--H\"ubsch \cite{BH} for invertible singularities and the recent development of quantum singularity theory / LG topological string of Fan--Jarvis--Ruan--Witten \cite{FJR1}.  Compared to the other forms of mirror symmetry such as Calabi-Yau via Calabi-Yau and toric via LG, our version has the benefit of not having any poorly behaved exceptional cases.  These exceptional cases hindered the formulation of mathematical conjectures in the other forms of mirror symmetry.  This makes the present (LG via LG) model of mirror symmetry attractive for future study.  Historically, this version has been used in physics to verify geometric mirror symmetry (CY via CY) through the conjectured LG / CY correspondence.  LG via LG is certainly more general, since the LG-orbifold theories under consideration do not have to correspond to Calabi-Yau manifolds.  Even if they do correspond to Calabi-Yau manifolds (orbifolds), they are not necessarily the Gorenstein orbifolds where a mathematical proof was established by Batyrev \cite{Batyrev}.  The author has been informed that this generality, combined with a proof of LG / CY correspondence, has been exploited by Chiodo-Ruan \cite{ChiodoRuanPrivate} to generalize Batyrev's theorem in Calabi-Yau hypersurface of Gorenstein weight projective spaces.

LG via LG mirror symmetry is not a new idea.  As indicated, it was an important physical tool to verify Calabi-Yau mirror symmetry in the early investigations of this phenomenon.  Throughout the literature, a striking construction was Berglund--H\"ubsch's trasposed potential.  Let me briefly review this construction, which restricts consideration to so-called \emph{invertible} singularities.  A quasi-homogeneous polynomial $W=\sum_{i=1}^s c_i\prod_{j=1}^N z_j^{a_{ij}}$ is invertible if $s=N$ (\emph{i.e.} if the number of monomials equals the number of variables).  Then the matrix $A_W=(a_{ij})$ of exponents is square, and the matrix $A_W^T$ defines another quasi-homogeneous invertible potential -- the \emph{Berglund--H\"ubsch dual} -- which we denote by $W^T=\sum_{i=1}^s c_i\prod_{j=1}^N z_j^{a_{ji}}$.  Berglund--H\"ubsch proposed almost twenty years ago \cite{BH} that $(W,W^T)$ forms a mirror pair.  It was known that one must consider orbifold LG-models $(W/G, W^T/G^T)$ for this proposition to have any chance to be correct.  In the literature, the construction of the dual group $G^T$ is known in many cases (e.g. for the Fermat Quintic), and we present a general construction in Section \ref{sec:Duality of Groups}.  We should emphasize that the subject of LG via LG mirror symmetry was never fully developed in physics because (i) a construction of the $A$-model was absent, and (ii) although the orbifold $B$-model state space was given by Intriligator--Vafa \cite{IV}, the ring structure was still lacking.  The first problem was solved recently by Fan--Jarvis--Ruan--Witten \cite{FJR1}-\cite{FJR3} with the establishment of quantum singularity theory / LG-topological string.  As for the second problem, Kaufmann wrote down the multiplication in many cases and proposed a general recipe \cite{Ka1}-\cite{Ka3}.  Guided by his recipe, we wrote down a multiplication for non-degenerate invertible singularities $W$ and $G\subset SL$.  Our definition of multiplication has an important restriction not present in Kaufmann's recipe, namely that the $B$-model orbifold group should be a subgroup of $SL_N\C$.  This is dual to the fact that in Fan--Jarvis--Ruan--Witten's construction, every admissible $A$-model orbifold group must contain the exponential grading operator $J$.

With both the $A$-model and the $B$-model established, we can study the LG via LG mirror symmetry.  Our main theorems are:

\begin{theorem}\label{thm:state space isomorphism intro}
	Let $W$ be a non-degenerate invertible potential and $G$ a group of diagonal symmetries of $W$.
  There is a bi-graded isomorphism of vector spaces
  \[\h WG \cong \Q_{W^T, G^T},\]
  where $\h WG$ is the FJRW $A$-model of $(W,G)$ and $\Q_{W^T, G^T}$ is the orbifold $B$-model of $(W^T, G^T)$.
\end{theorem}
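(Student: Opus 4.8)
The plan is to reduce the stated isomorphism to an explicit, grading-preserving bijection between monomial bases of the two state spaces, and then to verify that this bijection intertwines the relevant invariance conditions by means of the group duality of Section~\ref{sec:Duality of Groups}. First I would put both sides into a common shape. Using the definitions recalled above, I write the $A$-model as a sum over sectors
\[
\h{W}{G}\;=\;\bigoplus_{g\in G}\bigl(\jac(W|_{\Fix(g)})\cdot\omega_g\bigr)^{G},
\]
where $W|_{\Fix(g)}$ is the restriction of $W$ to the fixed subspace $\Fix(g)$, $\omega_g$ is the canonical volume form on $\Fix(g)$, and the superscript denotes the $G$-invariant part; a monomial basis of the sector $g$ is given by those monomials $\prod_{j\in I_g}z_j^{\ell_j}$ (with $I_g=\setp{j}{g\ \text{fixes}\ z_j}$) representing a basis of the Milnor ring whose product with $\omega_g$ is $G$-invariant. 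Writing the $B$-model
\[
\Q_{W^T,G^T}\;=\;\bigoplus_{h\in G^T}\bigl(\jac(W^T|_{\Fix(h)})\cdot\omega_h\bigr)^{G^T}
\]
in the same way, the theorem reduces to a bijection between the two indexing sets of admissible pairs (group element, monomial) that matches the bigradings. A preliminary lemma I would establish is that only sectors with $W|_{\Fix(g)}$ non-degenerate contribute, and that such a restriction of a non-degenerate invertible polynomial is again invertible, with exponent matrix the corresponding square submatrix of $A_W$; dually for $W^T$.

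Second comes the mirror map itself. The group duality provides a perfect pairing $G_W^{\max}\times G_{W^T}^{\max}\to\mathbb{Q}/\Z$ under which $G^T$ is the annihilator of $G$, and in coordinates this pairing is built from $A_W^{-1}$ and its transpose $(A_W^T)^{-1}=(A_W^{-1})^T$. I would use this transpose relation to define a map on the indexing data that exchanges the roles of monomial exponents and group twists. Roughly, the directions fixed by $g$ (which carry the monomial) become the twisted directions of the image $h\in G_{W^T}^{\max}$, while the directions twisted by $g$ become the directions in which $h$ is fixed and therefore carry the image monomial, the numerical data being transported through $(A_W^{-1})^T$. The content of the preliminary lemma is exactly what guarantees that this exchange carries the Milnor ring of $W|_{\Fix(g)}$ to that of $W^T|_{\Fix(h)}$, so that $h$ lands in $G_{W^T}^{\max}$ and the output is again an admissible monomial.

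Third I would check the two structural compatibilities. For the bigrading, a direct computation of the degree-shifting numbers (ages) of $g$ and $h$ and of the weighted degrees of the two monomials, using the charges $q^{W}_j$, $q^{W^T}_j$ together with $(A_W^T)^{-1}=(A_W^{-1})^T$, shows that the left/right $A$-charges of a sector-$g$ element equal the right/left $B$-charges of its image; the map thus respects both gradings, once one accounts for the reflection of one grading that is characteristic of mirror symmetry. For invariance, the key point is conceptual rather than computational: $G$-invariance of a monomial times $\omega_g$ is precisely a system of conditions of the form $\langle g',\cdot\rangle\in\Z$ for $g'\in G$, and under the mirror map these transform, through the perfect pairing, into exactly the conditions defining $G^T$-invariance. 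Hence admissibility is preserved in both directions, and since $(G^T)^T=G$ the map is a bijection.

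The step I expect to be the main obstacle is the second one: making the mirror map well defined and bijective on the nose. One must show that the exchange of exponents and twists descends to Milnor ring classes, i.e. is independent of the choice of monomial representative modulo the Jacobian ideal, and that it interacts correctly with the combinatorics of restriction to fixed loci --- in particular that for dual sectors $g\leftrightarrow h$ the restricted polynomial $W^T|_{\Fix(h)}$ really is the Berglund--H\"ubsch transpose of $W|_{\Fix(g)}$. This is where the invertibility of the restrictions and the precise description of $G^T$ must be used together; once it is in place, the grading and invariance checks above are comparatively routine bookkeeping.
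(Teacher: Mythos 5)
Your overall architecture coincides with the paper's: define an exchange-of-exponents-and-twists map on the full \emph{unprojected} state spaces, check that it preserves the bigrading, and then deduce the theorem by restricting to invariants, where $G$-invariance on the $A$-side corresponds to $G^T$-invariance on the $B$-side through the $\Z$-valued pairing built from $A_W^{-1}$ (this is exactly Lemma \ref{lem:dual group corresponds to invariant monomials} combined with Definition \ref{def:Dual group definition} and Lemma \ref{lem:(G^T)^T = G}). However, the step you defer as ``the main obstacle'' is the entire mathematical content of the theorem, and the route you sketch for it would fail. Under the mirror map there is no sector-to-sector correspondence at all: an element is a pair (sector, monomial), and distinct monomials in a single $B$-sector are sent into \emph{distinct} $A$-sectors. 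Consequently your key claim --- that for dual sectors $g\leftrightarrow h$ the restriction $W^T|_{\Fix(h)}$ is the Berglund--H\"ubsch transpose of $W|_{\Fix(g)}$ --- is false in general. Take the chain $W=X_1^{a_1}X_2+X_2^{a_2}$ with $a_1\neq a_2$, so $W^T=Y_1^{a_1}+Y_1Y_2^{a_2}$. For $g=(\zeta,1)$ with $\zeta^{a_1}=1$, $\zeta\neq 1$, one has $\Fix(g)=\{X_2\}$ and $W|_{\Fix(g)}=X_2^{a_2}$; the mirror map sends the monomials $X_2^{r_2}dX_2\ket{g}$, $0\le r_2\le a_2-2$, to elements of the pairwise distinct $B$-sectors $h=\overline\rho_2^{\,r_2+1}$, each of which has $\Fix(h)=\{Y_1\}$ and $W^T|_{\Fix(h)}=Y_1^{a_1}$. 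The two restricted polynomials are not transposes of one another, and for longer chains even the dimensions of the fixed loci disagree: the $A$-side fixes a tail $\{X_s,\dotsc,X_N\}$ while the $B$-side fixes an initial segment $\{Y_1,\dotsc,Y_t\}$, with $t$ unrelated to $N-s+1$.

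What replaces your sector-wise lemma in the paper is a global counting argument that is genuinely type-specific. One first reduces to atomic potentials (Fermat, loop, chain) using the decomposition of $W$ into decoupled summands, the corresponding product decomposition of $G^\text{max}_W$, and the additivity of the bigrading over atomic summands. Then, for each atomic type, Kreuzer's combinatorial lemmas (Lemmas \ref{lem:Atomic potentials -- Milnor Ring} and \ref{lem:Atomic potentials -- symmetries}: the explicit monomial basis of the chain Milnor ring, the uniqueness of the presentation $g=\prod_i\rho_i^{\alpha_i}$ with $0\le\alpha_i<a_i$, and the fact that $\prod_i Y_i^{\alpha_i}\mapsto\prod_i\rho_i^{\alpha_i}J$ carries the Milnor-ring basis bijectively onto the symmetries with even-dimensional fixed locus) are what make the exchange of pairs (sector, monomial) $\leftrightarrow$ (monomial, sector) well defined and bijective; the bigrading is then verified by direct computation separately in the Fermat, loop, and chain cases. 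Your worry about independence of the representative modulo the Jacobian ideal is resolved the same way: the map is defined on a canonical monomial basis, not on arbitrary representatives. Without the reduction to atomic types and these combinatorial inputs, your outline does not yet contain a proof of bijectivity, which is the heart of the theorem.
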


\begin{theorem}\label{thm:frobenius algebra isomorphism intro}
	Let $W$ be a non-degenerate invertible potential and $G^\text{max}$ its maximal group of diagonal symmetries.  There is an isomorphism of Frobenius algebras
  \[\h {W}{G^\text{max}} \cong \Q_{W^T},\]
  where $\Q_{W^T}$ is the unorbifolded $B$-model of $W^T$.
\end{theorem}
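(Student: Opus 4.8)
The plan is to bootstrap from Theorem~\ref{thm:state space isomorphism intro}. First I would specialize that theorem to $G = G^\text{max}$ and observe that the group duality construction of Section~\ref{sec:Duality of Groups} sends the maximal symmetry group to the trivial group, $(G^\text{max})^T = \set{\id}$. Hence the orbifold $B$-model collapses to the ordinary Milnor ring, $\Q_{W^T, (G^\text{max})^T} = \Q_{W^T, \set{\id}} = \Q_{W^T}$, so Theorem~\ref{thm:state space isomorphism intro} already furnishes a bi-graded vector space isomorphism $\phi \colon \h{W}{G^\text{max}} \xrightarrow{\sim} \Q_{W^T}$. What remains is to promote $\phi$ to an isomorphism of Frobenius algebras, i.e. to check that $\phi$ carries the unit to the unit, intertwines the two pairings, and is multiplicative.

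The unit and the pairing are the tractable half. The FJRW unit is the distinguished generator of the bidegree-$(0,0)$ summand, and the string/unit axiom forces its product with any class to act as the identity; under $\phi$ this must land on $1 \in \Q_{W^T}$, which I would pin down using the normalization already fixed in the state-space isomorphism. For the pairing, the $A$-model carries the FJRW Poincar\'e pairing, which couples the sector indexed by $g$ with the sector indexed by $g^{-1}$ through the two-point correlators, while the $B$-model carries the residue (Grothendieck) pairing, which couples a monomial with its complement relative to the Hessian top class. I would match these directly on the monomial basis produced by $\phi$: the involution $g \leftrightarrow g^{-1}$ on sectors corresponds precisely to the complementary-monomial duality of the residue pairing, so the two pairings agree up to the fixed normalization.

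The multiplication is the heart of the theorem, and here I would combine a reconstruction argument with the Thom--Sebastiani decomposition. The Milnor ring $\Q_{W^T} = \C[z_1, \dots, z_N]/(\partial_{z_1}W^T, \dots, \partial_{z_N}W^T)$ is generated as an algebra by the variables $z_1, \dots, z_N$, so it suffices to identify their $\phi$-preimages $\xi_1, \dots, \xi_N \in \h{W}{G^\text{max}}$ via the state-space dictionary and to verify $\phi(\xi_i \star \beta) = z_i \cdot \phi(\beta)$ for every basis class $\beta$; an induction on bidegree then yields full multiplicativity. Dualizing against the pairing, each such identity amounts to evaluating a genus-zero three-point correlator $\br{\xi_i, \beta, \gamma}_{0,3}$. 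Since $\overline{\M}_{0,3}$ is a point, I would compute these through the FJRW concavity and dimension axioms: when the insertions are narrow the relevant line bundles of the $W$-structure have no holomorphic sections, the virtual class reduces to the Euler class of the obstruction $H^1$ (equal to $1$ in the balanced case $H^1 = 0$), and the group-element and degree selection rules leave exactly the correlators dual to the monomial products $z_i \cdot (\text{monomial})$ in $\Q_{W^T}$.

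The main obstacle is making this correlator computation uniform across all invertible $W$. For a fixed $W$ the selection rules and concavity do the job, but confirming that each surviving three-point number equals the precise constant demanded by the residue pairing --- not merely that it is nonzero --- and doing so whether the generator $\xi_i$ lands in a narrow or a broad sector, is delicate. The cleanest route I foresee is to exploit the Kreuzer--Skarke classification, which writes every invertible $W$ as a Thom--Sebastiani sum of atomic pieces of Fermat, loop, and chain type. I would establish the Frobenius-algebra isomorphism on each atomic factor by direct correlator computation, where the concavity analysis is completely explicit, and then show that both the FJRW product and the Milnor-ring product are multiplicative under the tensor decomposition $\Q_{(W_1 \oplus W_2)^T} \cong \Q_{W_1^T} \tensor \Q_{W_2^T}$ and its $A$-model counterpart, so that the general case assembles from the atomic ones.
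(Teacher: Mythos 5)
Your overall skeleton --- reduce to Fermat/loop/chain atoms via Kreuzer--Skarke and the sums-of-singularities axiom, then reconstruct the product from the generators dual to $Y_1,\dotsc,Y_N$ --- is the same as the paper's, but two of your load-bearing steps have genuine problems. First, you anchor everything to the state-space isomorphism $\phi$ of Theorem \ref{thm:state space isomorphism intro} and propose to \emph{verify} that this particular map is multiplicative. The paper explicitly disclaims this: the mirror map for loop potentials involves a choice of parity, and the authors state they were unable to determine whether that choice is compatible with the FJRW product, so the Frobenius algebra isomorphism of Theorem \ref{thm:Mirror Theorem for G_max} ``may in general differ'' from the state-space isomorphism. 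Your verification step $\phi(\xi_i\star\beta)=z_i\cdot\phi(\beta)$ may therefore simply be false for the $\phi$ you fixed. The paper avoids this by constructing a fresh ring homomorphism $\C[Y_1,\dotsc,Y_N]\to\h{W}{G^\text{max}}$, $Y_i\mapsto 1_{\rho_i J}$, extended multiplicatively, and then showing (i) its kernel is exactly the Jacobian ideal of $W^T$ and (ii) surjectivity, via associativity to avoid Ramond sectors together with the dimension count of Lemma \ref{lem:Atomic potentials -- Gmax invariants in twisted sector}. No a priori identification with the state-space map is needed or claimed.

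Second, your correlator computation rests on concavity plus a vague ``Euler class of the obstruction, equal to $1$ in the balanced case'' --- but this conflates two different axioms, and the balanced correlators are precisely the ones \emph{not} equal to $1$. Concavity (all $\lb_j<0$) only yields the relations $1_{\bs{\rho^\alpha}J}\star 1_{\bs{\rho^\beta}J}=1_{\bs{\rho^{\alpha+\beta}}J}$. The actual Jacobian relations $1_{\rho_k J}^{\star a_k}=-a_{k-1}\,1_{\rho_{k-2}\rho_{k-1}^{a_{k-1}-1}J}$ come from non-concave correlators with line bundle degrees $(l_{k-1},l_k)=(0,-2)$, which must be evaluated by the index-zero axiom as the degree of the Witten map, producing the specific constant $-a_{k-1}$; without this axiom you cannot get the constants ``demanded by the residue pairing.'' Moreover, even within a single atomic piece the broad-sector problem you flag as ``delicate'' is unavoidable: for two-variable loops with an exponent equal to $2$ or $3$, and for the chain relation at $k=N$, the sectors entering the three-point computation are Ramond, and the paper must pass to four-point correlators via the composition axiom (and, in one case, a direct pairing-axiom argument on $\FJRW_{\id}$). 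Your atomic decomposition does not dissolve this difficulty --- it is exactly where the hypotheses $q_j<\tfrac12$ and (for chains) $a_N>2$ enter, which your proposal never invokes.
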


Invertible singularities include, for example, Arnol'd's list of simple, unimodal and bimodal singularities \cite{AGV}.  Theorem \ref{thm:frobenius algebra isomorphism intro} has already been proven for the simple and parabolic singularities \cite{FJR1} and the unimodal and bimodal singularities \cite{KP+}.  The 14-families of exceptional (unimodal) singularities exhibit the famous \emph{Arnol'd strange duality}.  There have been attempts to explain strange duality by relating exceptional singularities to K3-surfaces.  It is possibly more natural to consider it from the LG via LG mirror symmetry perspective.  For example, we apply Theorem \ref{thm:frobenius algebra isomorphism intro} to show that strange duality indeed agrees with LG via LG mirror symmetry.

\begin{corollary}\label{cor:strange duality intro}
Let $W$ be one of Arnol'd's 14 exceptional singularities with strange dual $W^\text{SD}$, and $J$ its exponential grading operator.  Then
\[\h{W}{\br{J}}\cong \Q_{W^\text{SD}}.\]
i.e. the LG $A$-model for $W$ orbifolded by $J$ is isomorphic (as a Frobenius algebra) to the unorbifolded LG $B$-model of $W^\text{SD}$.
\end{corollary}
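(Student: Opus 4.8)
The plan is to deduce the Corollary from Theorem \ref{thm:frobenius algebra isomorphism intro}. The key point is that for the $14$ exceptional singularities the maximal group of diagonal symmetries is almost always cyclic, and therefore generated by $J$: writing each $W$ in an invertible normal form and reading off its weight system, a direct computation shows that $G^\text{max}$ is cyclic for every one of the $14$ families except $U_{12}$ (a Fermat summand $z^a$ contributes a factor $\Z/a$, while chain and loop summands contribute cyclic factors of order the relevant sub-determinant of $A_W$). For each cyclic case we have $\br{J} = G^\text{max}$, so Theorem \ref{thm:frobenius algebra isomorphism intro} immediately gives an isomorphism of Frobenius algebras $\h{W}{\br{J}} = \h{W}{G^\text{max}} \cong \Q_{W^T}$, and the Corollary reduces to the assertion $\Q_{W^T} \cong \Q_{W^\text{SD}}$ --- that the transpose is right-equivalent to the strange dual.

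To identify $W^T$ with $W^\text{SD}$ I would proceed through Arnol'd's table of the $14$ singularities, computing the transposed exponent matrix $A_W^T$ line by line and matching the result against the list. The conceptual reason the two constructions coincide is that Berglund--H\"ubsch transposition exchanges the two triples of invariants underlying strange duality: the Gabrielov numbers of $W^T$ equal the Dolgachev numbers of $W$ and conversely. Since $W^\text{SD}$ is by definition the member of the list carrying the Dolgachev and Gabrielov triples of $W$ in the opposite order, this exchange places $W^T$ and $W^\text{SD}$ at the same point of the list; as right equivalence does not change the Milnor ring, $\Q_{W^T} \cong \Q_{W^\text{SD}}$ follows and disposes of all the cyclic cases.

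The remaining obstacle is the unique singularity with non-cyclic symmetry group, the Fermat form $U_{12} = x^3 + y^3 + z^4$, for which $G^\text{max} \cong \Z/3 \times \Z/3 \times \Z/4$ and $\br{J} \subsetneq G^\text{max}$, so Theorem \ref{thm:frobenius algebra isomorphism intro} no longer applies directly. Here I would fall back on the state-space isomorphism of Theorem \ref{thm:state space isomorphism intro}, which gives $\h{U_{12}}{\br{J}} \cong \Q_{W^T, \br{J}^T}$ as bigraded vector spaces; using the group duality of Section \ref{sec:Duality of Groups} one identifies $\br{J}^T$ with the Calabi--Yau subgroup $SL_{W^T}$, and since $U_{12}$ is self-transpose and self-strange-dual the target is the $SL$-orbifolded ring $\Q_{U_{12}, SL}$.

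The main difficulty --- and the real content of this last case --- is then to promote the vector-space isomorphism to one of Frobenius algebras and to verify that this orbifold $B$-model collapses onto the unorbifolded ring $\Q_{U_{12}}$, matching unit, product, and pairing sector by sector. I expect the bulk of the work to lie precisely in this direct comparison of the two multiplication tables, where one must check that the invariant untwisted sector together with the contributions of the group elements fixing coordinate subspaces reproduces exactly the Milnor ring of $U_{12}$; once this is carried out, the cyclic cases of the previous paragraphs and this remaining computation together establish $\h{W}{\br{J}} \cong \Q_{W^\text{SD}}$ for all $14$ exceptional singularities.
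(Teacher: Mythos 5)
Your dichotomy --- ``$G^\text{max}$ is cyclic, hence generated by $J$, for all cases except $U_{12}$'' --- fails on both counts, and this breaks the proof for five cases beyond $U_{12}$. First, cyclic factors do not combine into a cyclic group unless their orders are coprime: in Arnol'd's standard forms, $E_{14}=x^2+y^3+z^8$ has $G^\text{max}\cong\Z/2\times\Z/3\times\Z/8$, and likewise $Z_{13}$, $W_{12}$, $W_{13}$ have non-cyclic maximal symmetry groups. Second, and more fundamentally, cyclicity of $G^\text{max}$ does not imply that $J$ generates it: for $Q_{12}=x^2z+y^3+z^5$ the charges are $(2/5,1/3,1/5)$, so $J$ has order $15$, while $G^\text{max}\cong\Z/10\times\Z/3\cong\Z/30$ is cyclic of order $30$. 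In total six of the fourteen standard forms ($E_{14}$, $Z_{13}$, $Q_{12}$, $W_{13}$, $W_{12}$, $U_{12}$) have $\br{J}\subsetneq G^\text{max}$, so Theorem \ref{thm:frobenius algebra isomorphism intro} does not apply to them as you invoke it; your argument covers only the other eight. A related representative-dependence problem infects the identification $\Q_{W^T}\cong\Q_{W^\text{SD}}$ as well: the Fermat form of $E_{14}$ is self-transpose, yet its strange dual is $Q_{10}$ (Milnor numbers $14\neq 10$), so computing $A_W^T$ ``line by line'' on standard forms would contradict, rather than confirm, the Gabrielov--Dolgachev exchange; that exchange is realized by transposition only for suitably chosen invertible representatives.

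For $U_{12}$ your fallback has a genuine gap: Theorem \ref{thm:state space isomorphism intro} is only a bigraded vector-space isomorphism, and ``promoting'' it to a Frobenius algebra isomorphism requires knowing the FJRW product on $\h{U_{12}}{\br{J}}$, which you have no means to compute --- the relevant correlators land in Ramond sectors, beyond the reach of the concavity and index-zero axioms, and neither Theorem \ref{thm:Mirror Theorem for G_max} nor the $SL$ Calabi--Yau loop theorem applies, since $U_{12}=x^3+y^3+z^4$ is a sum of Fermats with $\sum_i q_i = 11/12\neq 1$. Indeed, the paper's own $U_{12}$ example performs exactly the $B$-side computation you propose, but it can only conclude because the $A$-side ring $\C[x,y,z]/\br{x^2,y^2,z^3}$ is already known from the proposition that the example illustrates. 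The missing idea, which handles all six bad cases uniformly, is a change of representative: FJRW theory depends only on the charges and the orbifold group, not on the polynomial presentation of the singularity, so one may replace $W$ by an invertible $W'$ with the same Milnor ring and the same charges, chosen so that $\br{J}=G^\text{max}_{W'}$ and so that $(W')^T$ represents the strange-dual class (e.g.\ $E_{14}\rightsquigarrow x^2+y^3+xz^4$, $W_{12}\rightsquigarrow x^2+xy^2+z^5$, $U_{12}\rightsquigarrow x^2y+xy^2+z^4$). Then $\h{W}{\br{J}}\cong\h{W'}{\br{J}}=\h{W'}{G^\text{max}_{W'}}\cong\Q_{(W')^T}\cong\Q_{W^\text{SD}}$, which is precisely how the paper proves Proposition \ref{prop:Stange Duality}.
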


   We should mention that Theorem \ref{thm:state space isomorphism intro} specializes in the case of $G=G^\text{max}$ to the main result of Kreuzer in \cite{Kreuzer}.  That work considers only a single grading, and appeals to physically motivated `twist selection rules' to argue that the mirror map is degree-preserving.
   We clarify the physical picture, and establish our theorems in the most general context (bi-grading, dual group, Frobenius algebra structure) in order to set the stage for the future applications of LG-mirror symmetry. As we mentioned previously, one important application is the CY via CY mirror symmetry of CY-hypersurfaces of non-Goreinstein weighted projective space which is exploited by Chiodo-Ruan.

   Another important application is the integrable hierarchies problem.  Recall that there is a semi-simple Frobenius manifold theory for the unorbifolded $B$-model of $W^T$ due to Saito and the high-genus theory by Givental. Theorem \ref{thm:frobenius algebra isomorphism intro} naturally suggests the following conjecture
    \begin{conj}\label{conj:Frobenius Manifolds}
    Let $W$ be a non-degenerate invertible potential and $G^\text{max}$ be its maximal group of diagonal symmetries. Then the full FJRW-theory of $W/G$ is isomorphic to Saito--Givental theory of $W^T$.
    \end{conj}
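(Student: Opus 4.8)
The plan is to promote Theorem~\ref{thm:frobenius algebra isomorphism intro} from an isomorphism of Frobenius \emph{algebras} to an isomorphism of the full genus-zero \emph{Frobenius manifolds}, and then to bootstrap that to an identification of the entire higher-genus theories by invoking the Givental--Teleman classification of semisimple cohomological field theories. The underlying principle is that the FJRW theory of $(W, G^\text{max})$ is a cohomological field theory (CohFT) with flat identity, so it carries a total descendant potential built by Givental-style quantization from its genus-zero Frobenius manifold; Saito--Givental theory of $W^T$ is, by construction, the analogous quantization of the Saito Frobenius manifold attached to a primitive form of $W^T$. Thus it suffices to match the two constructions at each stage of the quantization.

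First I would establish that the genus-zero FJRW theory of $(W,G^\text{max})$ and the Saito Frobenius manifold of $W^T$ are isomorphic as \emph{conformal} Frobenius manifolds, i.e.\ compatibly with their Euler fields. Theorem~\ref{thm:frobenius algebra isomorphism intro} already supplies the isomorphism of the tangent space at the origin, with its pairing and product. To extend this over the whole deformation space I would use reconstruction: the WDVV equations, together with the string and dilaton equations and the homogeneity imposed by the bi-grading of Theorem~\ref{thm:state space isomorphism intro}, determine the genus-zero potential from a finite package of primary correlators. On the $B$-side these flat structures, the residue pairing, and the Euler field are explicit from Saito's theory. The task reduces to checking that the initial correlators and the two Euler fields agree; here the bi-grading should force the gradings to match and the residue pairing should match the FJRW pairing.

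Second I would verify generic semisimplicity. For the Saito Frobenius manifold of an isolated singularity this is classical: on a dense open subset of the base of the miniversal deformation the operator of multiplication by the Euler field has distinct eigenvalues. Once the genus-zero isomorphism of the previous step is in place, semisimplicity transfers to the FJRW side. With semisimplicity established, Teleman's theorem says that a semisimple CohFT is reconstructed from its genus-zero restriction by the action of a canonically determined $R$-matrix, and that this $R$-matrix is itself fixed by the quantum connection of the Frobenius manifold together with its conformal structure. Because the two theories share the same genus-zero conformal Frobenius manifold, their $R$-matrices coincide, and hence so do their total descendant potentials --- which is precisely the assertion of the conjecture.

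I expect the genus-zero reconstruction to be the main obstacle. For a general invertible $W$ the FJRW correlators beyond the three-point functions are not directly computable, so supplying enough initial data to pin down the whole Frobenius manifold is the crux; one must either prove a reconstruction theorem reducing everything to three- and four-point functions (via WDVV and the grading) or compute the needed correlators by concavity and index arguments. A secondary subtlety is the ambiguity in the choice of primitive form for $W^T$: one must show that the grading fixes Saito's primitive form up to scale, so that no free parameter survives to spoil the match with the FJRW calibration.
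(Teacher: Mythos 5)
You should first note that the statement you were asked to prove is a \emph{conjecture} in the paper: the author supplies no proof, and in fact remarks elsewhere that even the prerequisite structures are incomplete (e.g.\ the Ramond-sector contributions to the $A$-model multiplication are ``not currently available,'' and an orbifold Frobenius manifold for the $B$-side is ``currently lacking''). So there is no proof in the paper to compare against; the question is whether your argument stands on its own. It does not, although the strategy you outline --- match the genus-zero conformal Frobenius manifolds, establish generic semisimplicity, then invoke the Givental--Teleman classification to identify the higher-genus theories --- is exactly the route by which such statements were eventually proved in the later literature, and your identification of the two main obstacles is accurate.

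The genuine gap is that the crucial step is deferred rather than carried out. Theorem \ref{thm:frobenius algebra isomorphism intro} gives an isomorphism of Frobenius \emph{algebras}, i.e.\ an identification of a single fiber (the tangent space at the origin) with its pairing and product; it says nothing about the Frobenius \emph{manifold}, which is encoded in the full genus-zero primary potential. Your reconstruction step --- ``the WDVV equations, together with string, dilaton and homogeneity, determine the genus-zero potential from a finite package of primary correlators'' --- is precisely what must be proved, and for a general invertible $W$ it requires controlling four-point (and in some cases higher) FJRW correlators, including those with Ramond-sector insertions, which is exactly the difficulty the paper flags as out of reach. Without that step, the semisimplicity transfer collapses: the Frobenius algebra at the origin that Theorem \ref{thm:frobenius algebra isomorphism intro} controls is a local (nilpotent) algebra, never semisimple, so semisimplicity can only be extracted from the behavior of the theory at generic points of the deformation space --- data you have not constructed on the $A$-side. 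The primitive-form ambiguity you mention is a second unresolved point of the same kind. In short, your proposal is a correct research plan, with the hard analytic/combinatorial content (genus-zero reconstruction and the correlator computations feeding it) acknowledged but not supplied, so it cannot be accepted as a proof of the conjecture.
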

    In many cases,  the Saito--Givental theory of $W^T$ is expected to satisfy certain integrable hierarchies. The study of these examples leads to a generalization of Witten's famous ADE integrable hierarchies conjecture solved by Fan--Jarvis--Ruan \cite{FJR1}. We refer the interested readers to
    \cite{RuanSurvey} for the details.

The paper is organized as follows.

\subsection{Organization of paper}\label{sec:Organization of paper}
We present some basic notions regarding invertible potentials in Section \ref{sec:Kreuzer-Skarke Classification}, including Kreuzer--Skarke's classification of invertible potentials.

In Section \ref{sec:The A and B models} we review the construction of the FJRW $A$-model Frobenius algebra, as well as the orbifold $B$-model state space of Intriligator--Vafa.  We introduce a multiplication on the orbifold $B$-model and show that this multiplication respects a suitably shifted version of the bi-grading of Intriligator--Vafa.

In Section \ref{sec:Mirror Symmetry} we prove an LG-via-LG mirror symmetry result, after introducing a suitable notion of duality between the symmetry groups of Berglund--H\"ubsch dual potentials.

In Section \ref{sec:Mirror Symmetry for Frobenius Algebras}, we show that there is an isomorphism of Frobenius algebras between the maximally orbifolded $A$-model of a singularity and the unorbifolded $B$-model of the Berglund--H\"ubsch dual. Finally, we demonstrate a relation between Arnol'd's strange duality and the LG-via-LG mirror symmetry discussed in this paper.
\subsection{Acknowledgements}\label{sec:Acknowledgements}
Thanks are due to several people, without whom this work would not be appearing in its present form.  I benefited greatly from an invitation of Tyler Jarvis to visit Brigham Young University and present an early incarnation of this work.  While there, I met several students working on similar material, with whom I collaborated to produce \cite{KP+}.  They have kindly permitted me to present in Section \ref{sec:A-model} a very slightly amended version of the `Review of Construction' section of that paper.  I enjoyed fruitful discussions about this work with Huijin Fan, Takashi Kimura, and Ralph Kaufmann, and am grateful for the extended contact I have had with Alessandro Chiodo, whose combination of enthusiasm and expertise I can only aspire to.

I owe an inestimable debt to Yongbin Ruan.  From help navigating the physics literature to imparting tips and tricks for executing messy computations, his advice and support have been unwavering.
\subsection{Preliminaries on Invertible Potantials}\label{sec:Kreuzer-Skarke Classification}
Consider an invertible non-degenerate quasihomogeneous potential
  \[W = \sum_{i=1}^N c_i \prod_{j=1}^N X_j^{a_{ij}}.\]

The exponent matrix $A = (a_{ij})$ encodes the singularity, modulo the coefficients $c_i$ of the monomials.

The charges (or weights) $q_i$ are determined by the condition that
\begin{equation}
A
\begin{bmatrix}
q_1\\
\vdots\\
q_N
\end{bmatrix}
=
\begin{bmatrix}
1\\
\vdots\\
1
\end{bmatrix}.
\label{eq:q relations matrix form}
\end{equation}
\begin{remark}
Since $A$ is invertible, the $c_i$ may be absorbed by rescaling the variables.  In what follows, we will take $c_i=1$ without loss of generality.
\end{remark}
If the matrix $A_W$ is square, its transpose $A_W^T$ will also correspond to a quasi-homogeneous polynomial, which we denote by $W^T$.

Non-degeneracy of $W$ requires the charges to be uniquely determined, so $\det A \neq 0$.

We write
\begin{equation} \label{eq:A inverse columns = rho}
A^{-1} =
\Biggl(\begin{array}{c|c|c|c}
\rho_k & \rho_2 & \cdots & \rho_N
\end{array}\Biggr),
\phantom{XX} \text{ with column vectors }
\phantom{XX} \rho_k=
\begin{bmatrix}
\varphi^{(k)}_1\\
\vdots\\
\varphi^{(k)}_N
\end{bmatrix}
\end{equation}
Then each $\rho_k$ defines a symmetry of $W$ via
\[\rho_k X_j = \exp(2\pi i \varphi^{(k)}_j) X_j.\]  We will abuse notation and use the same symbol to denote the symmetry and the column vector.
\begin{remark}
Suppose $g\subset (\C^*)^N$ is a diagonal symmetry of $W$, with $g X_k = \exp(2\pi i g_k)X_k$.  $g$ preserving $W$ is equivalent to
\[A\begin{bmatrix}
g_1\\
\vdots\\
g_N
\end{bmatrix} \in \Z^N,\]
So the phase vector $(g_1,\dotsc,g_N)^T$ is a linear combination of the columns of $A^{-1}$.  This implies that the $\rho_k$ generate the group $G_\text{max}$ of diagonal symmetries of $W$. For any $g\in G^\text{max}$, we can write $g=\prod_{i=1}^N \rho_i^{\alpha_i}$.
\end{remark}
\begin{remark}
The group $G^\text{max}$ is non-trivial, as it contains the \emph{exponential grading operator} $J$, which acts on $X_k$ with phase $q_k$.

Multiplying Equation \eqref{eq:q relations matrix form} by $A^{-1}$, we see that $J$ is given by
\begin{equation}\label{eq: J = product of rho_i}
  J = \prod_{i=1}^N \rho_i
\phantom{XX}\text{\textit {i.e.}}\phantom{XX}
|J|=\begin{bmatrix}
1\\
\vdots\\
1
\end{bmatrix}
\end{equation}
\end{remark}
In \cite{KS}, Kreuzer and Skarke prove that an invertible potential is non-degenerate if and only if it can be written as a sum of (decoupled) invertible potentials of one of the following three types, which we will refer to as \emph{atomic types}:
\begin{itemize}
\item[] \[W_\text{Fermat} = X^a.\]
\item[] \[W_\text{loop}= X_1^{a_1}X_2+X_2^{a_2}X_3+\dotsb +X_{N-1}^{a_{N-1}}X_N+X_N^{a_N}X_1.\]
\item[] \[W_\text{chain}= X_1^{a_1}X_2+X_2^{a_2}X_3+\dotsb +X_{N-1}^{a_{N-1}}X_N+X_N^{a_N}.\]
\end{itemize}
Although this classification allows for terms $X_kX_k+1$ (i.e. $a_k=1$), we will only consider the case $a_i\geq 2$ so that the charges satisfy $q_i\leq \tfrac{1}{2}$, as this condition is necessary for the construction of the FJRW $A$-model.
\begin{remark}
The proof of Theorem \ref{thm:Mirror Theorem for G_max} Chain potentials is valid only if $a_N > 2$, so that all charges are strictly less than $\tfrac{1}{2}$.
\end{remark}
It is clear that the transpose construction $W^T$ preserves the above types.
Our arguments will rely heavily on an understanding of these `atomic' potentials and their symmetry groups, and we recall without proof some elementary facts from \cite{Kreuzer} below.  Because the Fermat potential is particularly straightforward, our discussion focuses on Loops and Chains.
\begin{notation}
We use the Dirac delta: \[\delta_{\alpha,\beta}:=\begin{cases}1& \text{ if } \alpha=\beta,\\
0&\text{ else.}\end{cases}\]
Also
\[\delta_{i}^\text{even}:=\begin{cases}1& \text{ if $i$ is even,}\\
0&\text{ else,}\end{cases}\]
with $\delta_i^\text{odd}$ defined similarly.
\end{notation}
We now recall without proof some facts from \cite{Kreuzer} which will be useful in the sequel.

The following lemma facilitates computation of the phase of a given symmetry on a variable $X_j$.
\begin{lemma}\label{lem:Atomic potentials - phase equals algebraic sum}
  Let $W\in\C[X_1,\dotsc,X_N]$ be a non-degenerate invertible potential of atomic type, with exponent matrix $A_W$ and generators of $G^\text{max}$ given by $\rho_1,\dotsc,\rho_N$ corresponding to the columns of $A_W^{-1}$.  Let $g=\left(\prod_{i=1}^N \rho_i^{\alpha_i}\right)$, with $0 \leq \alpha_i < a_i$.
  For $j\in\{1,\dotsc,N\}$ with $X_j$ not fixed by $gJ$,
  \[\Theta_j^{gJ}= \sum_{i=1}^N (\alpha_i+1)\varphi_j^{(i)}.\]
  \emph{i.e} The phase of $gJ$ on $X_j$ is given by the algebraic sum of the phases of the $\rho_i$ on $X_j$, without the need to reduce this sum modulo $1$.

If $X_j$ is fixed by $gJ$, $\Theta_j^{gJ}=0$ although the algebraic sum of phases may equal either 0 or 1 (and these cases can be explicitly identified).
\end{lemma}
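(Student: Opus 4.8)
The plan is to recast the statement as an elementary bound on a single real number. Writing $\beta_i := \alpha_i + 1$, so that $1 \le \beta_i \le a_i$, and recalling $gJ = \prod_i \rho_i^{\alpha_i+1}$, the raw algebraic sum in question is $\Phi_j := \sum_{i=1}^N (\alpha_i+1)\varphi_j^{(i)}$. Since $\varphi_j^{(i)}$ is the $(j,i)$-entry of $A_W^{-1}$, the vector $\Phi = (\Phi_1,\dotsc,\Phi_N)^T$ is exactly $A_W^{-1}\beta$, i.e.\ it is the unique solution of $A_W\Phi = \beta$. The phase $\Theta_j^{gJ}$ is by definition the representative of $\Phi_j$ in $[0,1)$, and $X_j$ is fixed by $gJ$ precisely when $\Phi_j \in \Z$. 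Thus the entire lemma follows once I show the key inequality $0 \le \Phi_j \le 1$ for every $j$: when $X_j$ is not fixed this forces $\Phi_j \in (0,1)$, so no reduction modulo $1$ is needed and $\Theta_j^{gJ} = \Phi_j$; when $X_j$ is fixed, $\Phi_j$ is an integer in $[0,1]$, hence $0$ or $1$, which is exactly the dichotomy asserted. The Fermat case $A_W = (a)$ is immediate, since there $\Phi_1 = \beta_1/a \in (0,1]$.

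For a chain, $A_W$ is upper bidiagonal, so $A_W\Phi = \beta$ reads $a_i\Phi_i + \Phi_{i+1} = \beta_i$ for $i < N$ and $a_N\Phi_N = \beta_N$. This gives the backward recursion $\Phi_N = \beta_N/a_N$ and $\Phi_i = (\beta_i - \Phi_{i+1})/a_i$. I would then prove $0 \le \Phi_i \le 1$ by downward induction: the base case holds because $1/a_N \le \beta_N/a_N \le 1$, and if $0 \le \Phi_{i+1} \le 1$ then $\beta_i - \Phi_{i+1} \in [\beta_i - 1,\,\beta_i] \subseteq [0, a_i]$ (using $1 \le \beta_i \le a_i$), so $\Phi_i \in [0,1]$. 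The same recursion identifies the boundary cases: $\Phi_i = 0$ forces $\beta_i = 1$ and $\Phi_{i+1} = 1$, while $\Phi_i = 1$ forces $\beta_i = a_i$ and $\Phi_{i+1} = 0$, which is precisely the promised explicit description of the fixed variables.

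For a loop the system $a_i\Phi_i + \Phi_{i+1} = \beta_i$ becomes cyclic (indices modulo $N$), so the recursion no longer terminates; this is the main obstacle. I would unroll the recursion around the loop and solve the resulting closure condition, obtaining the closed form $\Phi_1 = T/D$, where $T = \sum_{l=1}^N (-1)^{l-1}\beta_l\, a_{l+1}\cdots a_N$ and $D = \det A_W = a_1\cdots a_N - (-1)^N$. Setting $P = a_1\cdots a_N$, one checks that $T/P$ coincides with the chain expression for $\Phi_1$ analyzed above, so the chain bound already yields $0 \le T \le P$. Since $D > 0$, this settles the case $N$ odd immediately, as then $D = P+1 > P \ge T \ge 0$. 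The delicate point is $N$ even, where $D = P-1 < P$ and I must improve the upper estimate to $T \le D = P-1$. Here I would use that $T$ is an integer together with a parity analysis of the chain recursion: tracing the conditions under which the chain expression equals $1$ shows that $T = P$ would force an alternating pattern of boundary equalities that is inconsistent at the final index when $N$ is even, so $T < P$, and integrality of $T$ upgrades this to $T \le P-1 = D$. Hence $0 \le \Phi_1 \le 1$, and by the cyclic symmetry of the loop (relabel so that the chosen variable becomes the first) the same bound holds for every $\Phi_j$.

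I would finish with the bookkeeping: the boundary values $\Phi_j \in \{0,1\}$ are exactly the fixed variables, and the recursion (for chains) or the equality cases in the bound (for loops) make explicit which of $0$ or $1$ the algebraic sum equals, completing the final clause. The only genuinely non-routine step is the loop's even-$N$ refinement, which is what forces the mildly surprising appearance of integrality and parity rather than a purely continuous estimate.
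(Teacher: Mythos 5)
There is nothing internal to compare your argument against: the paper states this lemma among the ``elementary facts recalled without proof'' from Kreuzer \cite{Kreuzer}, so it offers no proof of its own. Judged on its own merits, your proof is correct, and it supplies exactly the missing elementary argument. The reduction of the whole statement to the bound $0\le\Phi_j\le 1$ for the solution of $A_W\Phi=\beta$, with $\beta_i=\alpha_i+1\in[1,a_i]$, is the right move, since $\Theta_j^{gJ}$ is the mod-$1$ representative of $\Phi_j$ and $X_j$ is fixed precisely when $\Phi_j\in\Z$. The Fermat case is immediate; the chain case follows from your backward induction, and its equality analysis ($\Phi_i=1$ forces $\beta_i=a_i$ and $\Phi_{i+1}=0$, while $\Phi_i=0$ forces $\beta_i=1$ and $\Phi_{i+1}=1$) recovers the suffix/alternating description of fixed variables that the paper records in Lemma \ref{lem:Atomic potentials -- Gmax invariants in twisted sector}. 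For the loop, the closed form $\Phi_1=T/D$ with $D=\det A_W=a_1\cdots a_N-(-1)^N$ is correct, the identification of $T/P$ (where $P=a_1\cdots a_N$) with the corresponding chain value legitimately imports the chain bound $0\le T\le P$, and your even-$N$ refinement is sound: $T=P$ would force the alternating equality pattern $\Phi_1=1,\Phi_2=0,\dotsc$ in the chain recursion, terminating at $\Phi_N=0$ when $N$ is even, contradicting $\Phi_N=\beta_N/a_N>0$; integrality of $T$ then upgrades $T<P$ to $T\le P-1=D$. The cyclic relabeling extends the bound to every $j$. Two remarks confirming internal consistency: for $N$ odd your analysis shows $\Phi_j$ can never equal $0$ or $1$, which is compatible with the lemma's hypothesis because, by Lemma \ref{lem:Atomic potentials -- symmetries}, $J^{-1}$ admits no presentation $\prod_i\rho_i^{\alpha_i}$ with $0\le\alpha_i<a_i$ when $N$ is odd (so $gJ=\id$ never arises under the stated hypothesis); and for $N$ even your equality cases are exactly the two alternating presentations of $J^{-1}$, i.e.\ $gJ=\id$, matching Lemma \ref{lem:Atomic potentials -- Gmax invariants in twisted sector}.
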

The following lemma gives explicit generators over $\C$ for the Milnor ring of a loop or chain potential.
\begin{lemma}\label{lem:Atomic potentials -- Milnor Ring}
$\,$

  \begin{itemize}
  \item The Milnor ring $\Q_{W_\text{loop}}$ for a loop potential is generated over $\C$ by $\{\prod_{i=1}^N X_i^{\alpha_i}\,|\,0\leq \alpha_i < a_i\}$, and has dimension $\prod_{i=1}^N a_i$.
  \item The Milnor ring $\Q_{W_\text{chain}}$ for a chain potential is generated over $\C$ by $\{\prod_{i=1}^N X_i^{\alpha_i}\,|\,0\leq \alpha_i < a_i\}$ subject to the condition that the largest set $\{1,\dotsc,s\}$ of consecutive indices for which $\alpha_i=\delta_{i}^\text{odd}(a_i-1)$ has an even number of elements (possibly zero).  Its dimension is $\sum_{i =\text{odd}}(a_i-1)\prod_{j=i+1}^N a_j$,
  where we interpret the empty product as equal to $1$.
  \end{itemize}
\end{lemma}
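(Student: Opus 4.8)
The plan is to present $\Q_W$ as the quotient $\C[X_1,\dots,X_N]/\jac_W$, where $\jac_W=(\partial_{X_1}W,\dots,\partial_{X_N}W)$, to exhibit the listed monomials as a spanning set by an explicit reduction, and then to compute $\dim_\C\Q_W$ independently from the classical Milnor number formula $\dim_\C\Q_W=\prod_{i=1}^N(q_i^{-1}-1)$, valid for a quasihomogeneous isolated singularity. Matching the size of the spanning set against this number forces it to be a basis. Throughout I would use that each $\partial_{X_j}W$ is quasihomogeneous of charge $1-q_j$, so every reduction step stays inside a single graded piece; since $\Q_W$ is finite dimensional only finitely many charges occur, and this is what ultimately guarantees termination.

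For the loop, $\partial_{X_j}W_\text{loop}=X_{j-1}^{a_{j-1}}+a_jX_j^{a_j-1}X_{j+1}$ with indices read cyclically, so in $\Q_{W_\text{loop}}$ one has $X_{j-1}^{a_{j-1}}\equiv -a_jX_j^{a_j-1}X_{j+1}$, equivalently $X_j^{a_j}\equiv -a_{j+1}X_{j+1}^{a_{j+1}-1}X_{j+2}$. Whenever a monomial carries an exponent $\alpha_j\ge a_j$ I would apply the latter rule to trade it downwards; fixing a monomial order refining the charge grading makes the leading monomial strictly decrease, so the process halts at a combination of the monomials $\{\prod_i X_i^{\alpha_i}:0\le\alpha_i<a_i\}$, of which there are $\prod_i a_i$. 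On the dimension side, the charge relations $a_jq_j+q_{j+1}=1$ give $1-q_j=a_{j-1}q_{j-1}$, so $\prod_j(q_j^{-1}-1)=\prod_j a_{j-1}q_{j-1}/q_j=\prod_j a_j$ once the $q$'s telescope cyclically. Hence the spanning set has exactly $\dim_\C\Q_{W_\text{loop}}$ elements and is a basis.

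For the chain the same kind of reduction shows that $\{\prod_i X_i^{\alpha_i}:0\le\alpha_i<a_i\}$ still spans, with $\partial_{X_N}W_\text{chain}=X_{N-1}^{a_{N-1}}+a_NX_N^{a_N-1}$ now supplying the terminating rule $X_N^{a_N-1}\equiv -a_N^{-1}X_{N-1}^{a_{N-1}}$. The genuinely new ingredient is the relation from the free end, $\partial_{X_1}W_\text{chain}=a_1X_1^{a_1-1}X_2$, i.e.\ $X_1^{a_1-1}X_2\equiv 0$. I would show that a standard monomial vanishes exactly when the parity condition of the statement fails, the decisive half being: if the largest initial block of indices on which the exponents saturate the prescribed extremal pattern (taking the value $a_i-1$ on alternate slots and $0$ on the others) has odd length $s$, then the monomial is zero. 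The mechanism is a cascade. At the top of the block the saturated factor $X_s^{a_s-1}$---paired with the $X_{s+1}$ that is present because the block stops at $s$, or handled directly by $\partial_{X_N}W$ when $s=N$---is converted through $\partial_{X_s}W$ into $X_{s-1}^{a_{s-1}}$, which over-saturates the neighbouring slot; iterating pushes this excitation two slots at a time down the chain until it manufactures a factor $X_1^{a_1-1}X_2$ and the monomial dies. Counting the surviving monomials and comparing with the chain charge computation ($q_N=a_N^{-1}$ and $q_j=(1-q_{j+1})/a_j$ for $j<N$) yields agreement with $\dim_\C\Q_{W_\text{chain}}$, so spanning together with the dimension count again gives a basis.

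The main obstacle is precisely this propagation argument for the chain: proving that the odd-block monomials are \emph{all} zero while the even-block monomials remain linearly independent, with nothing over- or under-counted. I would organize it either as an induction on the chain length $N$, peeling off the end term $X_N^{a_N}$ and treating $X_2^{a_2}X_3+\dots+X_N^{a_N}$ as a shorter chain, or by completing $\{\partial_{X_j}W\}$ to a Gröbner basis and reading off the leading-term ideal. The delicate point is that the naive generators are \emph{not} a Gröbner basis---forming S-polynomials produces genuinely new relations (for example a top-power relation in $X_N$ that the displayed rules only reach circularly)---so some care with the monomial order and with termination is required, and here the quasihomogeneous grading is exactly the device that keeps the bookkeeping finite.
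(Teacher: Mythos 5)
The paper itself contains no proof to compare against: this lemma sits in the list of facts explicitly ``recalled without proof'' from \cite{Kreuzer}, so your argument supplies something the text omits. Your overall architecture is the natural one --- present $\Q_{W}=\C[X_1,\dotsc,X_N]/\jac$, show the listed monomials span by reduction modulo the Jacobian ideal, compute $\dim_\C \Q_W=\prod_{i}(q_i^{-1}-1)$ independently from the charges, and let the count force linear independence --- and your telescoping computation $\prod_i(q_i^{-1}-1)=\prod_i a_i$ for the loop is correct.

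The genuine gap is your termination claim for the loop. The rewriting rule $X_j^{a_j}\mapsto -a_{j+1}X_{j+1}^{a_{j+1}-1}X_{j+2}$ has \emph{forced cycles}, so no monomial order can strictly decrease along every step. Concretely, for $N=3$ take $m=X_1^{a_1}X_2X_3$: the only reducible exponent is the first, and the successive forced reductions give $m\equiv -a_2\,X_2^{a_2}X_3^{2}\equiv a_2a_3\,X_1X_3^{a_3+1}\equiv -a_1a_2a_3\,m$, returning to the monomial you started with; for $N=2$ one has $X_1^{a_1}X_2\equiv -a_2\,X_1X_2^{a_2}\equiv a_1a_2\,X_1^{a_1}X_2$. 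The repair is to use the cycle rather than avoid it: a reduction sequence stays inside one finite-dimensional graded piece, so it either reaches the set $\left\{\prod_iX_i^{\alpha_i}\mid 0\le\alpha_i<a_i\right\}$ or revisits a monomial, and in the latter case the accumulated scalar is $(-1)^k$ times a nonempty product of the $a_i\ge 2$, hence $\neq 1$, which forces that monomial (and the one you started from) to vanish in $\Q_{W_\text{loop}}$; either way the monomial lies in the span. Note that your closing caveat about non-Gr\"obner generators is aimed only at the chain, whereas the loop is where the stated descent actually fails; conversely, for the chain this scalar trick is unavailable at the crucial spot, since the cycle $X_N^{a_N}\equiv -a_N^{-1}X_{N-1}^{a_{N-1}}X_N\equiv X_N^{a_N}$ has scalar exactly $1$, so your leftward cascade through $X_i^{a_i-1}X_{i+1}\equiv -a_i^{-1}X_{i-1}^{a_{i-1}}$ is genuinely needed there, as you anticipate. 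Finally, if you carry out the chain count you will find that the number of parity-admissible monomials equals the Milnor number but exceeds the lemma's displayed formula by $1$ whenever $N$ is even: the full-length pattern is a nonzero class (e.g.\ $X_1^{a_1-1}$ for $X_1^{a_1}X_2+X_2^{a_2}$, whose weighted degree is below that of every generator of $\jac$) that the printed sum omits. So the ``agreement'' you invoke must be checked against the monomial description, which is the correct statement, rather than against the printed expression.
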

For $g\in G^\text{max}$, the next lemma identifies the $G^\text{max}$-invariants in $\Q_{\Fix(gJ)}$.
The lemma after next one identifies the symmetry groups of atomic invertible potentials.
\begin{lemma}\label{lem:Atomic potentials -- symmetries}$\,$\setlength{\parindent}{0pt}

\begin{itemize}
\item Let $W_\text{loop}$ be a loop potential.  Then $G_W^\text{max}$ has order $\prod_{i=1}^N a_i - (-1)^N$.

If $N$ is even, any symmetry $g$ of $W_\text{loop}$ may be written
$$g=\prod_{i=1}^N\rho_i^{\alpha_i} \phantom{XX}\text{ with }\phantom{XX} 0\leq\alpha_i < a_i.$$
This presentation is unique, except in the case of $J^{-1}$
$$J^{-1}=\prod_{i \text{ even}} \rho_i^{a_i-1} \phantom{XX}\text{ and }\phantom{XX} J^{-1}=\prod_{i \text{ odd}} \rho_i^{a_i-1}$$
If $N$ is odd, any symmetry $g\neq J^{-1}$ of $W_\text{loop}$ may be written uniquely as
$$g=\prod_{i=1}^N\rho_i^{\alpha_i} \phantom{XX}\text{ with }\phantom{XX} 0\leq\alpha_i < a_i.$$
  \item Let $W_\text{chain}$ be a chain potential.  Then $G_W^\text{max}$ has order $\prod_{i=1}^N a_i$, and any $g   \in G_W^\text{max}$ may be written uniquely as
  $$g=\prod_{i=1}^N\rho_i^{\alpha_i} \phantom{XX}\text{ with }\phantom{XX} 0\leq\alpha_i < a_i.$$
\end{itemize}
\end{lemma}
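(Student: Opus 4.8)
The plan is to reduce the entire lemma to the arithmetic of the lattice quotient $\Z^N/A_W\Z^N$. By the remarks preceding Equation \eqref{eq: J = product of rho_i}, a diagonal symmetry of $W$ is determined by its phase vector, which must lie in $A_W^{-1}\Z^N \bmod \Z^N$, and $\rho_k$ is the class of the column $A_W^{-1}e_k$. Hence $\alpha=(\alpha_1,\dots,\alpha_N)\mapsto\prod_{i=1}^N\rho_i^{\alpha_i}$ has phase vector $A_W^{-1}\alpha$ and defines a surjective homomorphism $\Z^N\to G^\text{max}$ whose kernel is exactly $A_W\Z^N$; it therefore descends to an isomorphism $\Z^N/A_W\Z^N\xrightarrow{\sim}G^\text{max}$ sending $[e_i]$ to $\rho_i$. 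In particular $|G^\text{max}|=|\det A_W|$, while by Equation \eqref{eq: J = product of rho_i} the class of $J$ is $[\mathbf{1}]$ and that of $J^{-1}$ is $[-\mathbf{1}]$. A determinant computation then gives the two orders: $A_\text{chain}$ is upper bidiagonal, so $\det A_\text{chain}=\prod_i a_i$, whereas $A_\text{loop}$ is the same bidiagonal matrix with an extra $1$ in the $(N,1)$ corner, and a cofactor expansion along its first column gives $\det A_\text{loop}=\prod_i a_i+(-1)^{N+1}=\prod_i a_i-(-1)^N$. It remains to prove the stated existence and uniqueness of box representatives $\prod\rho_i^{\alpha_i}$ with $0\le\alpha_i<a_i$.

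For the chain, the box $B=\{\alpha:0\le\alpha_i<a_i\}$ satisfies $|B|=\prod a_i=|G^\text{max}|$, so it is enough to prove injectivity of the box map. If $\alpha,\beta\in B$ with $\alpha\equiv\beta$, then $\delta:=\alpha-\beta=A_\text{chain}v$ for some $v\in\Z^N$, with $|\delta_i|\le a_i-1$. Reading the relations $\delta_i=a_iv_i+v_{i+1}$ ($i<N$) and $\delta_N=a_Nv_N$ from the bottom upward, $|\delta_N|<a_N$ forces $v_N=0$, then $|\delta_{N-1}|<a_{N-1}$ forces $v_{N-1}=0$, and so on, so $v=0$ and $\alpha=\beta$. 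Injectivity together with $|B|=|G^\text{max}|$ yields existence and uniqueness simultaneously.

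For the loop the relations are cyclic, $\delta_i=a_iv_i+v_{i+1}$ with indices modulo $N$, and the heart of the proof is to classify the integer solutions of $A_\text{loop}v=\delta$ with $|\delta_i|\le a_i-1$. First, if some $|v_i|\ge 2$ then $|v_{i+1}|\ge a_i|v_i|-(a_i-1)>|v_i|$, and iterating around the cycle yields $|v_i|>|v_i|$; hence $v_i\in\{-1,0,1\}$. Second, $v_i\ne 0$ forces $|v_{i+1}|\ge a_i-(a_i-1)=1$, so if any entry is nonzero then all are, i.e. $v\in\{\pm1\}^N$. Third, the bound $|\delta_i|\le a_i-1$ excludes equal consecutive signs (these would give $|\delta_i|=a_i+1$), so a nonzero $v$ must alternate in sign around the cycle, which closes up if and only if $N$ is even. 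When $N$ is even the two alternating patterns produce precisely the identification $\prod_{i\text{ odd}}\rho_i^{a_i-1}\equiv\prod_{i\text{ even}}\rho_i^{a_i-1}$, and solving $A_\text{loop}v=\mathbf{1}+\sum_{i\text{ even}}(a_i-1)e_i$ with $v=(0,1,0,1,\dots)$ (and the odd analogue) shows their common class is $[-\mathbf{1}]=J^{-1}$. Counting then finishes both parities: for $N$ even, $|B|=\prod a_i=|G^\text{max}|+1$ and the box map has a single two-element fibre, so it is onto and every $g$ has a box representative, unique except for $J^{-1}$; for $N$ odd there is no nonzero $v$, so the box map is injective with image of size $|G^\text{max}|-1$, and the one missing class is $J^{-1}$, since $A_\text{loop}v=\alpha+\mathbf{1}$ with $1\le(\alpha+\mathbf{1})_i\le a_i$ has, by the same bounding technique, only solutions alternating between $0$ and $1$, impossible for odd $N$.

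The main obstacle is exactly this loop analysis: because of the cyclic coupling one cannot back-substitute as in the chain, and the entire dichotomy --- unique-except-$J^{-1}$ for even $N$ versus $J^{-1}$-missing for odd $N$ --- is controlled by whether an alternating pattern can close up around the $N$-cycle. Once the magnitude bound $|v_i|\le 1$ and the all-or-nothing vanishing are established, the parity dichotomy and the distinguished role of $J^{-1}$ follow, and the determinant and chain computations are routine.
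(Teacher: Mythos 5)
Your proposal cannot take ``the same approach as the paper,'' because the paper offers no proof at all: this lemma is one of the facts explicitly ``recalled without proof'' from Kreuzer's paper \cite{Kreuzer}. Judged on its own, your argument is correct and self-contained, which makes it a genuine addition. The reduction to the lattice quotient $\Z^N/A_W\Z^N$ is legitimate: by the paper's remark preceding Equation \eqref{eq: J = product of rho_i}, the phase vector of any diagonal symmetry lies in $A_W^{-1}\Z^N$ modulo $\Z^N$, so $\alpha\mapsto\prod_i\rho_i^{\alpha_i}$ is surjective with kernel exactly $A_W\Z^N$, and $|G^\text{max}|=|\det A_W|$; your determinant computations ($\prod_i a_i$ for the chain, $\prod_i a_i-(-1)^N$ for the loop via the corner cofactor) are right, the chain back-substitution is right, and the loop analysis --- $|v_i|\le 1$ by the strict-growth contradiction around the cycle, the all-or-nothing vanishing, and sign alternation forced by $|\delta_i|\le a_i-1$ --- correctly isolates the two alternating vectors in $\{\pm1\}^N$ as the only nonzero solutions, identifies the unique double fibre as $\bigl\{\prod_{i\text{ odd}}\rho_i^{a_i-1},\ \prod_{i\text{ even}}\rho_i^{a_i-1}\bigr\}$, and verifies by explicit solution that it lies over $J^{-1}$. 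The only compressed step is the odd-$N$ identification of the missing class, where you invoke ``the same bounding technique'' for $A_\text{loop}v=\alpha+\mathbf{1}$: here the bound is $1\le(\alpha+\mathbf{1})_i\le a_i$ rather than $|\delta_i|\le a_i-1$, so the details change slightly --- an entry $v_i\ge 2$ forces $v_{i+1}$ negative of weakly larger magnitude while $v_i\le-2$ forces $v_{i+1}$ positive of strictly larger magnitude (still a contradiction around the cycle); then $v_i=-1$ is impossible since it forces $v_{i+1}\ge 1+a_i$, and the constraints force $0\mapsto 1$ and $1\mapsto 0$, giving the impossible odd-length alternation. Since that variant does go through and yields exactly the conclusion you state, the proof is complete; I would only suggest spelling out that modified bounding argument rather than citing it as identical.
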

\begin{remark}
Lemmas \ref{lem:Atomic potentials -- Milnor Ring}, and \ref{lem:Atomic potentials -- symmetries} combine to show that for loop and chain potentials, image the map
\begin{align*}
\Q_{W_\text{chain}}^T&
\longrightarrow G_W\\
  \prod_{i=1}^N Y_i^{\alpha_i}dY_i&
  \longmapsto \left(\prod_{i=1}^N \rho_i^{\alpha_i}\right)J
\end{align*}
is the collection of group elements with \emph{even dimensional} fixed loci.  The map is injective for chains, and simply ramified over $J^{-1}$ for loops when $N$ is even.
\end{remark}
\begin{lemma}\label{lem:Atomic potentials -- Gmax invariants in twisted sector}
  $\,$

  \begin{itemize}
  \item For a loop potential $W_\text{loop}$, the only symmetry $gJ$ with non-trivial fixed locus is $gJ=\id$, which has fixed locus $\C^N$.  Generators of the $G^\text{max}$ invariants as a $\C$-vector space are given by
  \[\Q_{\Fix(gJ)}^{G^\text{max}} = \begin{cases}
  	\emptyset & \text{ if } gJ=\id, \text{ and $N$ is odd.}\\
    \left\{\prod_{i=1}^N X_i^{\delta_i^\text{even}(a_i-1)}dX_i,\, \prod_{i=1}^N X_i^{\delta_i^\text{odd}(a_i-1)}dX_i\right\} & \text{ if } gJ=\id, \text{ and $N$ is even.}\\
    1 & \text{ else.}
  \end{cases}\]
  \item For a chain potential, $W_\text{chain}$, if a symmetry $gJ$ fixes $X_t$, it must fix $\{X_t,\dotsc,X_N\}$.  $\Fix gJ = \{X_t,\dotsc,X_N\}$ implies $g=\prod_{i=1}^N \rho_i^{\alpha_i}$ has $\alpha_i = \delta_{N-i}^\text{even}(a_i-1)$ for $i\geq t$, and this relation does not hold for $i=t-1$

  The $G^\text{max}$-invariants in $\Q_{\Fix(gJ)}$ are generated by
  $$
  \Q_{\Fix(gJ)}^{G^\text{max}} = \begin{cases}
    \emptyset & \text{ if } \Fix(gJ)=\{X_t,\dotsc, X_N\} \text{ is odd-dimensional.}\\
    \prod_{i=t}^N X_i^{\delta_{i-t}^\text{even}(a_i-1)}dX_i,  & \text{ if } \Fix(gJ)=\{X_t,\dotsc, X_N\} \text{ is even-dimensional,}\\
    1 & \text{ if } \Fix(gJ)=\emptyset\\
  \end{cases}
  $$
  \end{itemize}
\end{lemma}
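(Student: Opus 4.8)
The plan is to reduce both bullets to a single lattice criterion for $G^\text{max}$-invariance, and then to solve the resulting Diophantine recurrences using the (cyclic-)bidiagonal shape of the exponent matrix. In the $h=gJ$ sector a basis element has the form $\omega=\prod_{i\in F}X_i^{\alpha_i}\,\prod_{i\in F}dX_i$, where $F=\Fix(h)$ and $0\le\alpha_i<a_i$. Since each generator $\rho_k$ scales both $X_i$ and $dX_i$ by $\exp(2\pi i\,\varphi_i^{(k)})$, the phase of $\omega$ under $\rho_k$ is $\sum_{i\in F}(\alpha_i+1)\varphi_i^{(k)}$ (this is just Lemma \ref{lem:Atomic potentials - phase equals algebraic sum} read componentwise). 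Setting $w_i=\alpha_i+1$ for $i\in F$ and $w_i=0$ otherwise, and recalling $\varphi_i^{(k)}=(A^{-1})_{ik}$, I would observe that $\omega$ is $G^\text{max}$-invariant precisely when $w^{T}A^{-1}\in\Z^N$, i.e. $w\in A^{T}\Z^N$. The sub-cases $F=\emptyset$ are then immediate, since $\Q_{\emptyset}=\C\cdot 1$ and the vacuum $1$ carries no phase, giving the ``else''/``$1$'' entries in both bullets.

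For the loop I would first prove $F\neq\emptyset\Rightarrow h=\id$: if $h$ fixes $X_j$ then invariance of the monomial $X_j^{a_j}X_{j+1}$ forces $X_{j+1}$ fixed as well, and cycling around the loop closes up to $h=\id$. In the surviving sector $h=\id$ (so $F$ is all $N$ variables) the criterion reads $\alpha_i+1=a_im_i+m_{i-1}$ (indices mod $N$) with $1\le\alpha_i+1\le a_i$. Comparing the largest and smallest $m_i$ through this recurrence yields a short growth estimate forcing every $m_i\in\{0,1\}$, after which the constraints collapse to the alternation $m_{i-1}=1-m_i$. On an even cycle this has exactly the two solutions $m_i=\delta_i^\text{even}$ and $m_i=\delta_i^\text{odd}$, producing the two claimed monomials $\prod X_i^{\delta_i^\text{even}(a_i-1)}dX_i$ and $\prod X_i^{\delta_i^\text{odd}(a_i-1)}dX_i$ (both automatically nonzero in $\Q_{W_\text{loop}}$ by the relation-free description in Lemma \ref{lem:Atomic potentials -- Milnor Ring}); on an odd cycle no alternation closes up, so there are no invariants.

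For the chain the forward-propagation argument ($\theta_t=0\Rightarrow\theta_{t+1}=\dots=\theta_N=0$, with no wrap-around) shows $F$ is either $\emptyset$ or a tail $\{X_t,\dots,X_N\}$; tracking the partial sums $S_j=(\beta_j-S_{j+1})/a_j$ with $S_{N+1}=0$, $\beta_i=\alpha_i+1$, shows that fixing $X_t,\dots,X_N$ forces exactly $\alpha_i=\delta_{N-i}^\text{even}(a_i-1)$ for $i\ge t$ together with the failure of this relation at $i=t-1$, as asserted. The key simplification is then a reduction to a shorter chain: because $A^{-1}$ is upper triangular, the generators $\rho_k$ with $k<t$ act trivially on every $X_i,dX_i$ with $i\ge t$, while the $\rho_k$ with $k\ge t$ restrict to the $G^\text{max}$-generators of the sub-chain $W'=W|_{F}$ (whose inverse exponent matrix $A'^{-1}$ is the lower-right block of $A^{-1}$). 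Hence the invariants coincide with the untwisted-sector invariants of $W'$, governed by $\gamma+\mathbf{1}\in A'^{T}\Z^{\,N-t+1}$. Now the boundary monomial $X_N^{a_N}$ removes the cyclic freedom present in the loop: the leading equation $\gamma_t+1=a_tm_t$ forces $m_t=1$, and the same alternation then determines the \emph{unique} candidate $\gamma_i=\delta_{i-t}^\text{even}(a_i-1)$.

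The decisive and most delicate step is the final vanishing check. Relabelling $W'$ as a chain in indices $1,\dots,N-t+1$, the unique candidate $\gamma_i=\delta_{i-t}^\text{even}(a_i-1)$ becomes the extreme ``odd'' monomial $\delta_j^\text{odd}(a-1)$, so by the Milnor-ring relation of Lemma \ref{lem:Atomic potentials -- Milnor Ring} it survives exactly when its leading run of maximal exponents has even length, i.e. precisely when $\dim F=N-t+1$ is even. Thus even-dimensional $F$ gives the single generator $\prod_{i=t}^N X_i^{\delta_{i-t}^\text{even}(a_i-1)}dX_i$ and odd-dimensional $F$ gives nothing, matching the two chain entries. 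I expect the main friction to lie here — reconciling the $\delta^\text{even}_{N-i}$ indexing of the fixed-locus pattern with the $\delta^\text{odd}$ indexing of the Milnor-ring relation after the sub-chain relabelling — together with making the loop growth estimate that pins $m_i\in\{0,1\}$ fully rigorous.
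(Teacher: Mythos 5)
The paper never proves this lemma: it appears in Section \ref{sec:Kreuzer-Skarke Classification} among the facts explicitly ``recalled without proof'' from \cite{Kreuzer}, so there is no in-paper argument to compare yours against; your proposal has to stand as a self-contained proof, and it does. The skeleton is right: det-twisted $G^\text{max}$-invariance of $\prod X_i^{\alpha_i}\prod dX_i$ is exactly the lattice condition $w=\alpha+\mathbf{1}\in A^T\Z^N$ (invariance under the generators $\rho_k$ suffices), and the bidiagonal shape of $A^T$ turns this into the recurrences you write. Your loop analysis closes correctly: if some $m_i\geq 2$ then $m_{i-1}\leq a_i(1-m_i)\leq -2$, which in turn forces $m_{i-2}\geq 1+a_{i-1}|m_{i-1}|>|m_{i-1}|$, so absolute values strictly grow going around the cycle, a contradiction; hence $m_i\in\{0,1\}$ and the alternation $m_{i-1}=1-m_i$ follows, giving two solutions for $N$ even and none for $N$ odd. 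The chain reduction via block-triangularity of $A^{-1}$ (the $\rho_k$ with $k<t$ act trivially on the sector, and the $\rho_k$ with $k\geq t$ restrict to the $G^\text{max}$-generators of the sub-chain) is correct and is the cleanest way to organize the computation; the boundary equation $\gamma_t+1=a_tm_t$ then pins the unique candidate, whose relabelled form is indeed the extreme ``odd'' monomial of Lemma \ref{lem:Atomic potentials -- Milnor Ring}.

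Two points should be tightened. First, your last step conflates ``is not a basis element of the sub-chain Milnor ring'' with ``contributes nothing to the invariants.'' The clean justification is that the monomial basis of Lemma \ref{lem:Atomic potentials -- Milnor Ring} is an \emph{eigenbasis} for the twisted $G^\text{max}$-action, so $\dim\Q_{\Fix(gJ)}^{G^\text{max}}$ equals the number of basis monomials satisfying the lattice condition. Since the unique invariant monomial in the fundamental domain $0\leq\gamma_i<a_i$ is the extreme alternating one, the invariant space is one-dimensional exactly when that monomial satisfies the run condition ($\dim\Fix(gJ)$ even) and zero otherwise; this sidesteps having to decide whether the excluded monomial is literally zero in the quotient. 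Second, a cosmetic point: the phase formula $\sum_{i\in F}(\alpha_i+1)\varphi_i^{(k)}$ is immediate from the definition of the twisted action and only needs to be integral, i.e.\ it is used modulo $\Z$; citing Lemma \ref{lem:Atomic potentials - phase equals algebraic sum} (whose content is precisely the absence of mod-$1$ reduction for $gJ$) is unnecessary and slightly off-target there.
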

\section{The $A$ and $B$ models}\label{sec:The A and B models}
\subsection{FJRW $A$-model}\label{sec:A-model}
Let $W$ be a non-degenerate quasi-homogeneous polynomial in the variables $x_1, x_2, \dots, x_N$ with weights $q_1,q_2,\dots,q_N$ respectively. Non-degeneracy requires that these weights are uniquely determined by the condition that each monomial in $W$ has total weight $1$, and that $W$ has an isolated singularity at the origin.

The central charge of $W$ is defined to be
\[
\hat c:=\sum_{j=1}^N(1-2q_j).
\]
The Jacobian ideal $\jac$ is defined by \[\jac=\br{\frac{\partial W}{\partial x_1},\frac{\partial W}{\partial x_2},\dots,\frac{\partial W}{\partial x_N}}.
\]
The Milnor ring $\q{W}$ is given by
\[
\q{W} :=\C[x_1,x_2,\dots,x_N]/\jac
\]
together with the residue pairing. $\q{W}$ is a finite dimensional vector space over $\C$, with dimension
\[
\mu=\prod_{j=1}^N \left(\frac 1{q_j}-1\right).
\]
It is graded by weighted degree, and the elements of top degree form a one-dimensional subspace generated by $\text{hess}(W)=\det\left(\tfrac{\partial^2 W}{\partial x_i\partial x_j}\right)$.  One can check directly that the top degree is equal to $\hat c$.

For $f,g\in\q{W}$, the residue pairing $\br{f,g}$ may be defined by
\begin{equation}\label{eq:Milnor Ring Pairing}
fg = \frac{\br{f,g}}{\mu}\text{hess}(W) + \text{ lower order terms.}
\end{equation}
This pairing is non-degenerate, and endows the Milnor ring with the structure of a Frobenius algebra. For more details, see \cite{AGV}.

To define the FJRW ring, we require in addition to $W$ a choice of a group of diagonal symmetries of $W$. The choice of group heavily affects the resulting structure of the FJRW ring. The maximal group of diagonal symmetries is defined as
\[
G_W=\set{(\alpha_1,\alpha_2,\dots,\alpha_N)\subseteq (\C^*)^N \,|\,W(\alpha_1x_1,\alpha_2x_2,\dots,\alpha_Nx_N)=W(x_1,x_2,\dots,x_N)}
\]
Note that $G_W$ always contains the exponential grading element $J=(e^{2\pi iq_1},e^{2\pi iq_2},\dots,e^{2\pi iq_N})$. In general, the theory requires that the symmetry group be \emph{admissible} (see \cite{FJR1} section 2.3).

The Landau--Ginzburg Mirror Symmetry Conjecture states the following:
\begin{conj}[Landau--Ginzburg Mirror Symmetry Conjecture]
For a non-degenerate, quasi-homogeneous, singularity $W$ and diagonal symmetry group $G$, there is a dual singularity $W^T$ with dual symmetry group $G^T$ so that the FJRW-ring of $W/G$ is isomorphic to an orbifolded Milnor ring of $W^T/G^T$.\end{conj}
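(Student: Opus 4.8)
The plan is to establish the conjecture for invertible non-degenerate $W$ by producing an explicit, bi-degree-preserving linear isomorphism on natural monomial bases, and to organize the computation around the Kreuzer--Skarke atomic decomposition. Since a non-degenerate invertible $W$ decouples into Fermat, loop, and chain summands in disjoint sets of variables, both the FJRW state space $\h WG$ and the orbifold Milnor ring $\Q_{W^T,G^T}$ should split as tensor products over these atomic blocks, so my first task would be to check that the dual-group construction $G\mapsto G^T$ respects this decoupling. Granting that, it suffices to produce the isomorphism for each atomic type and then tensor the pieces together, tracking bi-degrees additively.

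For the map itself, I would work from the fact that the columns of $A^{-1}$ give generators $\rho_1,\dots,\rho_N$ of $G^\text{max}_W$ and, because $(A^T)^{-1}=(A^{-1})^T$, the columns of $(A^T)^{-1}$ give the analogous generators of $G^\text{max}_{W^T}$. A basis of $\h WG$ is indexed by pairs $(g,m)$ with $g=\prod_i\rho_i^{\alpha_i}\in G$ and $m$ a $G^\text{max}$-invariant monomial of the restricted Milnor ring $\Q_{\Fix(gJ)}$, as enumerated in Lemma~\ref{lem:Atomic potentials -- Gmax invariants in twisted sector}. The central device, already visible in the Remark following Lemma~\ref{lem:Atomic potentials -- symmetries}, is that the transpose interchanges the two pieces of data: a variable $X_j$ moved by $gJ$ contributes its exponent to a group element on the $A$-side but to a monomial exponent of $\Q_{W^T}$ on the $B$-side, while a fixed variable does the reverse. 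I would make this precise by sending $(g,m)$ to the $B$-model basis element obtained by reading the fixed-variable exponents of $m$ and the moved-variable exponents of $g$ off against the transposed generators, and then verify injectivity and surjectivity block by block using the order counts in Lemma~\ref{lem:Atomic potentials -- Milnor Ring} and Lemma~\ref{lem:Atomic potentials -- symmetries}.

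The bi-grading is where Lemma~\ref{lem:Atomic potentials - phase equals algebraic sum} does the real work. On the $A$-side the bi-degree of $(g,m)$ is built from an age-type shift assembled from the phases $\Theta_j^{gJ}$ together with the weighted degree of $m$; the lemma lets me compute $\Theta_j^{gJ}$ as an honest algebraic sum $\sum_i(\alpha_i+1)\varphi^{(i)}_j$ \emph{without} reducing modulo $1$, which is exactly what makes the degree comparison transparent. I would compute the corresponding $B$-model bi-degree of the image and show the two agree, using that $W$ and $W^T$ share the same central charge $\hat c$, so that the two bi-gradings are exchanged under the expected $\hat c$-reflection. Equation~\eqref{eq: J = product of rho_i} fixes the normalization $J=\prod_i\rho_i$, which I would use to align the untwisted sector with $\hess(W^T)$ and thereby pin down the grading conventions.

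The hardest part will be upgrading this vector-space statement, which is essentially Theorem~\ref{thm:state space isomorphism intro}, to the Frobenius-algebra isomorphism of Theorem~\ref{thm:frobenius algebra isomorphism intro} that the conjecture demands, since the $B$-side multiplication is the honest Milnor-ring product while the $A$-side product is defined through genus-zero FJRW invariants. Restricting to $G=G^\text{max}$ is what makes this feasible: there $G^T$ collapses and $\Q_{W^T,G^T}=\Q_{W^T}$ is the plain Milnor ring, so one only needs to match three-point structure constants against residue pairings. I expect the genuinely delicate bookkeeping to lie in the exceptional combinatorial cases flagged in the atomic lemmas --- the loop potentials with $N$ even, where the sector map is simply ramified over $J^{-1}$ and the invariant space in the $gJ=\id$ sector is two-dimensional, and the chain potentials, where the even-dimensional fixed-locus condition from Lemma~\ref{lem:Atomic potentials -- symmetries} must be reconciled with the parity constraint defining the generators of $\Q_{W_\text{chain}}$ in Lemma~\ref{lem:Atomic potentials -- Milnor Ring}. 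Handling general $G$ between $\br{J}$ and $G^\text{max}$, and confirming that the $G$-invariance imposed on the $A$-side is dual to the sector decomposition over $G^T$ on the $B$-side, would be the final and most error-prone step.
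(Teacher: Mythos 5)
Your overall architecture --- the mirror map that swaps fixed-variable monomial exponents with moved-variable group exponents, bi-degree preservation via the algebraic-sum phase lemma, reduction to Kreuzer--Skarke atomic types, and the upgrade to a Frobenius-algebra statement only for $G=G^\text{max}$ --- is the same as the paper's. But there is one genuine structural gap in your first step. You propose to split $\h WG$ and $\Q_{W^T,G^T}$ as tensor products over the atomic blocks and then ``check that the dual-group construction respects this decoupling.'' For a general admissible $G$ this is false: a subgroup $G\leq G^\text{max}_W=\bigoplus_i G^\text{max}_{W_i}$ need not be a direct sum of subgroups of the blocks (already $G=\br{J}$ for $W=x^3+y^3$ fails, since $J$ acts diagonally across both summands), so neither $G$, nor $G^T$, nor the invariant state spaces factor over the atomic decomposition. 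The paper avoids this by a two-step argument your proposal is missing: it first proves that the \emph{total unprojected} mirror map --- defined on $\bigoplus_{g\in G^\text{max}_{W^T}}\Q_g \to \bigoplus_{g\in G^\text{max}_W}\FJRW_g$, with no invariants taken --- is a bi-degree preserving isomorphism, and only there does the atomic/tensor reduction apply, because $G^\text{max}$ does split over the blocks. The general-$G$ statement then follows formally: one restricts the total isomorphism to $\Q_{W^T,G^T}$, and the definition of the dual group (Definition \ref{def:Dual group definition}, resting on Lemma \ref{lem:dual group corresponds to invariant monomials}) is exactly what guarantees that $G^T$-invariance of a $B$-model element corresponds to $G$-invariance of its image, so the image is precisely $\h WG$. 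Without this reordering, your ``final and most error-prone step'' is not merely error-prone; the route you set up for it cannot be carried out.

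A smaller point on the Frobenius-algebra half: matching ``three-point structure constants against residue pairings'' is not itself routine bookkeeping. The paper computes the $A$-side correlators from the FJRW axioms (concavity and index-zero for Neveu--Schwartz insertions, and the composition axiom to reach products that would otherwise pass through Ramond sectors), requires the standing hypothesis that all charges satisfy $q_j<\tfrac12$ (and, for chains, $a_N>2$) precisely so that the ring generators $1_{\rho_iJ}$ and their relevant products lie in Neveu--Schwartz sectors, and then shows the resulting relations are generated exactly by the Jacobian ideal of $W^T$. Your outline is compatible with this, and you correctly flag the even-$N$ loop and chain-parity subtleties, but the Ramond-sector avoidance is a required ingredient, not an incidental one.
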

\begin{remark}
(i) We use the notation $W^T$ suggestively for the dual singularity, as we demonstrate in this paper that the Berglund--H\"ubsch transposed singularity is the appropriate dual in the context of LG via LG mirror symmetry for non-degenerate invertible potentials; (ii) A strengthening of the conjecture applying to Frobenius manifolds should be true. However, a definition of orbifolded Frobenius manifold of $W^T/G^T$ is currently lacking.
\end{remark}
We now outline the definition of $\h WG$ as a $\C$-vector space, after which we will define the pairing, grading, and multiplication that make $\h WG$ a Frobenius algebra.

In \cite{FJR1}, the state space $\h WG$ is defined in terms of Lefschetz thimbles:
\[\h WG = \bigoplus_{\gamma\in G}H^\text{mid}(\Fix\gamma, W_\gamma^\infty,\Q)^G.\]
For further details, see \cite{FJR1}.  For our purposes, it will be most convenient to give a presentation in terms of Milnor rings, but we should point out that the isomorphism between the two presentations is not canonical (\cite{Wall1}, \cite{Wall2}).

Let $G$ be an admissible group. For $h\in G$, let $\Fix h\subset\C^N$ be the fixed locus of $h$, and let $N_h$ be its dimension. Define
\[
\mathscr{H}_{h}:=\Omega^{N_h}(\C^{N_h})/\left(dW|_{\Fix h}\wedge \Omega^{N_h-1}\right)\cong\q{W|_{\Fix h}}\cdot\omega
\]
where $\omega=dx_{i_1}\wedge dx_{i_2}\wedge\dots\wedge dx_{i_{N_h}}$ is a volume form\footnote{Note the volume form encodes a determinant-twist on the natural $G$-action on $\Q_{W|_{\Fix h}}$.}.

$G$ acts on $\mathscr{H}_{h}$ via its action on the coordinates, and the state space of the FJRW-ring is the vector space of invariants under this action, i.e.
\[
\h WG := \left(\bigoplus_{h\in G}\mathscr{H}_h\right)^G.
\]
$\h WG$ is $\mathbb{Q}$-graded by the so-called $W$-degree, which depends only on the $G$-grading.  To define this grading, first note that each element $h\in G$ can be uniquely expressed as
\[
h=(e^{2\pi i\Theta_1^h},e^{2\pi i\Theta_2^h},\dots,e^{2\pi i\Theta_N^h})
\]
with $0\leq \Theta_i^h < 1$.

For $\alpha_h \in (\mathscr{H}_{h})^G$, the $W$-degree of $\alpha_h$ is defined by
\begin{equation}
\deg_W(\alpha_h):=\dim\Fix h+2\sum_{j=1}^N(\Theta_j^h-q_j).\label{degw}
\end{equation}
Since $\Fix h=\Fix h^{-1}$, we have $\mathscr{H}_h \cong \mathscr{H}_{h^{-1}}$, and the pairing on $\q{W|_{\Fix h}}$ induces a pairing
\[(\mathscr{H}_h)^G \otimes (\mathscr{H}_{h^{-1}})^G\to\C.\]
The pairing on $\h WG$ is  the direct sum of these pairings.  Fixing a basis for $\h WG$, we denote the pairing by a matrix $\eta_{\alpha,\beta}=\br{\alpha,\beta}$, with inverse $\eta^{\alpha,\beta}$.

For each pair of non-negative integers $g$ and $n$, with $2g-2+n>0$, the FJRW cohomological field theory produces classes $\Lambda_{g,n}^W(\alpha_1,\alpha_2,\dots,\alpha_N)\in H^*(\overline {\mathscr{M}}_{g,n})$  of complex codimension $D$ for each $n$-tuple $(\alpha_1,\alpha_2,\dots,\alpha_N)\in (\h WG)^N$. The codimension $D$ is given by
\[
D:=\hat c_W(g-1)+\frac 12\sum_{i=1}^N\deg_W(\alpha_i),
\]
and the $n$-point correlators are defined to be
\[
\br{\alpha_1,\dotsc,\alpha_N}_{g,n}:=\int_{\overline {\mathscr{M}}_{g,n}}\Lambda_{g,n}^W(\alpha_1,\dotsc,\alpha_N),
\]
so $\br{\alpha_1,\dotsc,\alpha_N}_{g,n}$ obviously vanishes unless the codimension of $\Lambda_{g,n}^W(\alpha_1,\dotsc,\alpha_N)$ is zero. The ring structure on $\mathscr{H}_{W,G}$ is determined by the genus-zero three-point correlators. In other words, if $r,s\in \h{W}{G}$, then
\begin{equation}\label{eq:FJRW multiplication definition}
r\star s:=\sum_{\alpha,\beta}\br{r,s,\alpha}_{0,3}\eta^{\alpha,\beta}\beta
\end{equation}
where the sum is taken over all choices of $\alpha$ and $\beta$ in a fixed basis of $\h WG$.

The classes $\Lambda_{g,n}^W(\alpha_1,\dotsc,\alpha_N)$ satisfy the following axioms which facilitate the computation of the three-point correlators $\br{\alpha_1,\alpha_2,\alpha_3}$.
\begin{ax}\label{ax:dimension} Dimension: If the codimension $D\notin \frac 12 \Z$, then $\Lambda_{g,n}^W(\alpha_1,\alpha_2,\dots,\alpha_N)=0$.  In particular, if $g=0$ and $n=3$, then $\br{\alpha_1,\alpha_2,\alpha_3}=0$ unless $D=0$, which occurs if and only if $\sum_{i=1}^3\deg_W\alpha_i=2\hat c$.
\end{ax}
\begin{ax}\label{ax:symmetry} Symmetry:
Let $\sigma\in S_N$. Then
\[
\br{\alpha_1,\dotsc,\alpha_N}_{g,n}=\br{\alpha_{\sigma(1)},\dotsc,\alpha_{\sigma(n)}}_{g,n}.
\]
\end{ax}
The next few axioms relate to the degrees of line bundles $\mathscr{L}_1,\dotsc,\mathscr{L}_N$ endowing an orbicurve $\mathscr C$ with $k$ marked points $p_1,\dotsc,p_k$ and endowed with a \emph{$W$-structure}.  This means that for each monomial $\prod_{j=1}^N z_j^{a_{ij}}$ of $W$, $\bigotimes_{j=1}^N \mathscr L_j^{\otimes{a_{ij}}}\cong \omega_\text{log}$.  Here, $\omega_\text{log}$ is the canonical bundle of $\mathscr C \backslash \{p_1,\dotsc,p_k\}$, and the identification of monomials in the $\mathscr L_j$ with $\omega_\text{log}$ arises naturally in the attempt to solve the Witten equation on the orbicurve $\mathscr C$.  The details may be found in \cite{FJR1} and provide geometric background to the present construction.

Consider the class $\Lambda_{g,k}^W(\alpha_1,\alpha_2,\dots,\alpha_k)$, with $\alpha_j\in(\mathscr{H}_{h_j})^G$ for each $j\in\{1,\dots,N\}$. For each variable $x_j$, then $\lb_j:=\deg\mathscr|\mathscr L_j|$ is given by
\begin{equation}\label{eq:Definition of line bundle degrees}
\lb_j=q_j(2g-2+k)-\sum_{i=1}^k\Theta_j^{h_i}.
\end{equation}
($|\mathscr L_j|$ denotes the pushforward of a bundle on the orbicurve $\mathscr C$ to the underlying coarse curve).
\begin{ax}\label{ax:integer degrees}
Integer degrees: If $\lb_j\notin \Z$ for some $j\in\set{1,\dots,N}$, then $\Lambda_{g,k}^W(\alpha_1,\alpha_2,\dots,\alpha_k)=~0$.
\end{ax}
\begin{remark}
  This axiom has the following important consequence, which follows immediately from examining Equation \eqref{eq:Definition of line bundle degrees}.
\end{remark}
\begin{corollary}\label{cor:multiplication lands in single sector}
Suppose $\Lambda_{g,k}(\alpha_1,\dotsc,\alpha_{k-1},\alpha_k)\neq 0$, with $\alpha_i\in\FJRW_{h_i}$.  Then
\[\Lambda_{g,k}(\alpha_1,\dotsc,\alpha_{k-1},\tilde \alpha_k) = 0 \text{ for any } \tilde \alpha_k\notin\FJRW_{h_k}.\]
\end{corollary}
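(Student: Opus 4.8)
The plan is to deduce this directly from the Integer Degrees axiom (Axiom~\ref{ax:integer degrees}) together with the explicit formula \eqref{eq:Definition of line bundle degrees} for the bundle degrees $\lb_j$. The essential observation is that, among the quantities entering $\lb_j$, only the phases $\Theta_j^{h_i}$ depend on the chosen sectors, and each such phase lies in the half-open interval $[0,1)$ by construction. Because $\h WG$ is a direct sum over sectors and $\Lambda_{g,k}$ is multilinear in its insertions, it suffices to prove the vanishing when $\tilde\alpha_k$ lies in a single sector $\FJRW_{\tilde h_k}$ with $\tilde h_k\neq h_k$; the general case then follows by linearity.

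Under this reduction I would argue as follows. Since $\Lambda_{g,k}(\alpha_1,\dotsc,\alpha_k)\neq 0$, Axiom~\ref{ax:integer degrees} forces $\lb_j\in\Z$ for every $j$. Let $\tilde\lb_j$ denote the bundle degree obtained from \eqref{eq:Definition of line bundle degrees} after replacing $h_k$ by $\tilde h_k$. Every term of \eqref{eq:Definition of line bundle degrees} is unchanged except the contribution of the last insertion, so $\lb_j-\tilde\lb_j=\Theta_j^{\tilde h_k}-\Theta_j^{h_k}$. As both phases lie in $[0,1)$, this difference lies in the open interval $(-1,1)$. Since $\tilde h_k\neq h_k$, their phase vectors differ in at least one coordinate $j$, and for that $j$ the quantity $\lb_j-\tilde\lb_j$ is a nonzero element of $(-1,1)$, hence not an integer. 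Combined with $\lb_j\in\Z$, this gives $\tilde\lb_j\notin\Z$, whence Axiom~\ref{ax:integer degrees} yields $\Lambda_{g,k}(\alpha_1,\dotsc,\alpha_{k-1},\tilde\alpha_k)=0$.

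I do not expect a genuine obstacle here: the statement simply repackages the rigidity of the integrality constraint on the $\lb_j$, which is why the remark preceding it advertises the result as immediate. The only point deserving explicit mention is that distinct elements of $G$ are distinguished by their phase vectors $(\Theta_1^h,\dotsc,\Theta_N^h)$, so that $\tilde h_k\neq h_k$ genuinely forces a coordinate discrepancy rather than a mere relabelling; this is what guarantees the existence of the index $j$ used above, and it is the one hypothesis I would be careful to invoke.
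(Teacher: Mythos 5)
Your proposal is correct and takes essentially the same route as the paper: both arguments invoke Axiom~\ref{ax:integer degrees} to get $\lb_j\in\Z$ for all $j$, then observe that integrality of the modified degrees $\tilde\lb_j$ would force the phase vectors of $h_k$ and $\tilde h_k$ to coincide, hence $\tilde h_k=h_k$. The only cosmetic difference is that you control the discrepancy via $\Theta_j^{\tilde h_k}-\Theta_j^{h_k}\in(-1,1)$ directly, whereas the paper packages the same fact as the condition $\Theta_j^{\tilde h_k h_k^{-1}}\in\Z$ for all $j$.
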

\begin{proof}
Since
$\Lambda_{g,k}(\alpha_1,\dotsc,\alpha_{k-1},\alpha_k)\neq 0$, we know that for all $j$
\[\lb_j=q_j(2g-2+k)-\sum_{i=1}^k\Theta_j^{h_i}\in \Z.\]
Suppose $\alpha_k\in\FJRW_{\tilde h_k}$, where $\tilde h_k = (\tilde h_k h_k^{-1})h_k$. In order to have
\[\tilde \lb_j=q_j(2g-2+k)-\sum_{i=1}^{k-1}\Theta_j^{h_i} - \Theta_j^{\tilde h_k}\in \Z,\]
we need $\Theta_j^{\tilde h_k h_k^{-1}}\in \Z$.

Now, by Axiom \ref{ax:integer degrees}, $\Lambda_{g,k}(\alpha_1,\dotsc,\alpha_{k-1},\tilde \alpha_k) = 0$ unless this holds for all $j$, which is equivalent to $\tilde h_k = h_k$.
\end{proof}
\begin{ax}\label{ax:concavity}
Concavity: If $\lb_{j}<0$ for all $j\in\set{1,2,3}$, then $\br{\alpha_1,\alpha_2,\alpha_3}=1$.
\end{ax}
The next axiom is related to the Witten map.  When  $H^0(\bigoplus_{\substack{j=1\\}}^N \mathscr L_j)$ and $H^1(\bigoplus_{\substack{j=1\\}}^N \mathscr L_j)$ have the same rank, the Witten map is given by:
\begin{align*}
\mathcal W: H^0(\bigoplus_{\substack{j=1\\}}^N \mathscr L_j)\rightarrow H^1(\bigoplus_{\substack{j=1\\}}^N \mathscr L_j)\\
\mathcal W=\left(\overline{\frac{{\partial W}}{{\partial x_1}}}, \overline{\frac{{\partial W}}{{\partial x_2}}}, \dots, \overline{\frac{{\partial W}}{{\partial x_N}}}\right).
\end{align*}
Put $h^i_j=\text{rank}\, H^i(\mathscr L_j)$.

The fact that the Witten map is well-defined is a consequence of the geometric conditions on the $\mathscr{L}_j$ considered in \cite{FJR1}.  For further details, we refer readers to the original paper.

If $\Lambda_{g,n}^W(\alpha_1,\dots,\alpha_N)$ is a class of codimension zero, we obtain a complex number by integrating over $\overline{\mathscr{M}}_{g,n}$.  Abusing notation, we will refer to the class $\Lambda_{g,n}^W(\alpha_1,\dots,\alpha_N)$ and its integral over $\overline{\mathscr{M}}_{g,n}$ interchangeably.
\begin{ax}\label{ax:index-zero}
Index-Zero: Consider the class $\Lambda_{g,n}^W(\alpha_1,\alpha_2,\dots,\alpha_N)$, with $\alpha_i\in \h{\gamma_i}{G}$. If $\Fix \gamma_i=\set 0$ for each $i\in\set{1,2,\dots,n}$ and $\Lambda_{g,n}(\alpha_1,\alpha_2,\dots,\alpha_N)$ is of codimension
\[
\sum_{j=1}^N (h^0_j - h^1_j) = 0,
\]
then $\Lambda_{g,n}^W(\alpha_1,\alpha_2,\dots,\alpha_N)$ is equal to the degree of the Witten map.
\end{ax}
\begin{ax}\label{ax:composition}
Composition: If the four-point class, $\Lambda_{g,n}^W(\alpha_1,\alpha_2,\alpha_3,\alpha_4)$ is of codimension zero, then the correlator $\br{\alpha_1,\alpha_2,\alpha_3,\alpha_4}$ decomposes in terms of three-point correlators in the following way:
\[
\br{\alpha_1,\alpha_2,\alpha_3,\alpha_4}=\sum_{\beta,\delta}\br{\alpha_1, \alpha_2,\beta}\eta^{\beta,\delta}\br{\delta,\alpha_3,\alpha_4}.\]
\end{ax}
Note that $\Fix J=\set 0$ so $\mathscr{H}_{J}\cong\C$ and $\deg \FJRW_J=0$. The identity element in the FJRW-ring is an element of $\mathscr{H}_{J}$, and we denote this element by $\1$.
\begin{ax}\label{ax:pairing}
Pairing: For $\alpha_1,\alpha_2\in \h WG$,  $\br{\alpha_1,\alpha_2,\mathbf{1}}=\eta(\alpha_1,\alpha_2)$, where $\eta$ is the pairing in $\mathscr{H}_{W,G}$.
\end{ax}
\begin{ax} \label{ax:sums} Sums of singularities: If $W_1\in\C[x_1,\dots,x_r]$ and $W_2\in\C[y_1,\dots,y_s]$ are two non-degenerate, quasi-homogeneous polynomials with maximal symmetry groups $G_1$ and $G_2$, then the maximal symmetry group of $W=W_1+W_2$ is $G=G_1\times G_2$, and there is an isomorphism of Frobenius algebras
\[
\h{W}{G}\cong \h{W_1}{G_{W_1}}\otimes \h{W_2}{G_{W_2}}
\]
\end{ax}
\begin{remark}\label{rem:sums}
We note an important consequence of Axiom \ref{ax:sums}.  Under the same hypotheses as in the statement of the axiom, we have a Frobenius Algebra isomorphism
\[\Q_{W}\cong \Q_{W_1}\otimes \Q_{W_2},\]
and similarly
\[\Q_{W^T}\cong \Q_{W_1^T}\otimes \Q_{W_2^T}.\]
Consequently, in order to prove the Mirror Symmetry Conjecture for $W=W_1+W_2$ a sum of decoupled polynomials (with maximal $A$-model orbifold group, dual to the trivial $B$-model orbifold group), it suffices to prove it for $W_1$ and $W_2$ individually.
\end{remark}
\begin{ax} Deformation Invariance: \label{ax:deformation invariance}
$\Lambda_{g,n}^W(\alpha_1,\alpha_2,\dots,\alpha_N)$ is independent of the representative of $W$.
\end{ax}
\subsection{Orbifold $B$-model}\label{sec:B-model}

Let $W\in\C[y_1,\dotsc,y_N]$ be a non-degenerate quasi-homogeneous polynomial where $y_i$ has weight $q_i\in \mathbb{Q}$.

We will take $W$ to be an invertible potential, so $W = \sum W_j$ where each $W_j\in\C[y^{(j)}_1,\dotsc,\,y^{(j)}_{n_j}]$ is of loop, chain, or Fermat type.

Let $G\subset (\C^*)^N$ be a group of diagonal symmetries of $W$.

For $g\in G$, $\Fix(g)=\C^{N_g}$ where $N_g = \dim \Fix(g)$.  Put $\Q_g:=\Q_{W|_{\Fix g}}\omega_{\Fix g}$, where as before the presence of the volume form $\omega_{\Fix g}$ encodes a determinant twist of the natural $G$-action of $\Q_{W|_{\Fix g}}$.

\begin{definition}
  The unprojected state space of the Landau--Ginzburg orbifold $B$-model of $W/G$ is defined to be
  \[\Q =\bigoplus_{g\in G}\Q_g.\]
\end{definition}

This defines $\Q$ as a $G$-graded $\C$-vector space.  $\Q$  also possesses a $\mathbb{Q}$ bi-grading, which we discuss in the next section.  We will show that the multiplication defined in this section respects the bi-grading.

We endow $\Q$ with a non-degenerate pairing $\langle\,,\,\rangle$ by taking the sum of the pairings $\Q_g\tensor\Q_{g^{-1}}\to \C$, which are induced by the residue pairing under the identification $\Q_g\cong \Q_{g^{-1}}$.

We aim to endow $\Q$ with an algebra structure which preserves both the $G$-grading and the $\mathbb{Q}$ bi-grading.  We observe that for $g \in G$, we have a ring homomorphism $\Q_e\to \Q_g$ given by setting variables not fixed by $g$ equal to zero.  This induces on $\Q_g$ the structure of a cyclic $\Q_e$ module, with $1\in \Q_g$ as the generator of the $g$-graded summand.

So to define an algebra structure on $\Q$, it suffices to define a compatible multiplication
\[1_g \star 1_h = \gamma_{g,h} 1_{gh}.\]
Since $1_e$ will be the identity for the multiplication, we require
\begin{equation}
1_e\star 1_g = 1_g \phantom{X}\text{ so }\phantom{X}\gamma_{e,g} = 1_g = \gamma_{g,e}.
\label{eq:B model identity axiom}
\end{equation}
For the multiplication to be associative, we must have
\begin{equation}
(1_g\star 1_h)\star 1_k = 1_g\star (1_h\star 1_k)\phantom{X}\text{ so }\phantom{X}
\gamma_{g,h}\gamma_{gh,k} = \gamma_{g,hk}\gamma_{h,k}.
\label{eq:B model associativity axiom}
\end{equation}
We propose the following definition of $\gamma$ and check that it satisfies \eqref{eq:B model identity axiom} and \eqref{eq:B model associativity axiom}.
\begin{definition}\label{def:B-model multiplication}
For $g\in G$, let $I_g = \{i\,:\, g_i = 1\}$.  Define $\gamma$ through the equation
\begin{equation}\label{eq:B-model explicit cocycle definition}
\gamma_{g,h} \frac{\hess W|_{\Fix(g)\cap \Fix{h}}}{\dim\Fix(g)\cap\Fix(h)} =
\begin{cases}
\frac{\hess W|_{\Fix{gh}}}{\dim\Fix (gh)} &  \text{ if } I_g\cup I_h\cup I_{gh} = \{1,\cdots, n\}\\
0 &\text{else}.
\end{cases}
\end{equation}
\end{definition}
\begin{remark}
By definition, $\gamma_{g,h}$ has non-zero pairing with the determinant of the hessian of $W$ on the common fixed locus of $g$ and $h$, provided each variable is fixed by at least one of $g$, $h$ and $gh$.  The factor of $\dim \Fix(g)\cap \Fix(h)$ in the denominator ensures that Condition \eqref{eq:B model identity axiom} is satisfied.
\end{remark}
\begin{proposition}
  The above multiplication $\star$ is associative.
\end{proposition}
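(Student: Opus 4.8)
The plan is to prove the scalar cocycle identity \eqref{eq:B model associativity axiom}, namely $\gamma_{g,h}\gamma_{gh,k}=\gamma_{g,hk}\gamma_{h,k}$ for all $g,h,k\in G$, since the preceding discussion has already reduced associativity of $\star$ on the cyclic $\Q_e$-modules $\Q_g$ to this identity on structure constants. Throughout I read Definition \ref{def:B-model multiplication} as follows: writing $I_g=\{i:g_i=1\}$ so that $\Fix(g)$ is the coordinate subspace indexed by $I_g$, and noting $I_g\cap I_h\subseteq I_{gh}$, equation \eqref{eq:B-model explicit cocycle definition} compares the normalized top class $\hess W|_{\Fix(gh)}/\dim\Fix(gh)$, restricted to $\Fix(g)\cap\Fix(h)$ via the homomorphism that sets the remaining variables to zero, with the normalized top class on $\Fix(g)\cap\Fix(h)$. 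Thus $\gamma_{g,h}$ is nonzero exactly when (i) $I_g\cup I_h\cup I_{gh}=\{1,\dots,n\}$ and (ii) that restricted class is a nonzero multiple of $\hess W|_{\Fix(g)\cap\Fix(h)}$, and in that case it is the corresponding ratio of the proportionality constant and the normalizing factors.

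The key reduction is to atomic blocks. Write $W=\sum_a W_a$ as its decoupled sum of Fermat, loop, and chain summands on disjoint variable sets $B_a$. Because each $g\in G$ acts diagonally, every ingredient of \eqref{eq:B-model explicit cocycle definition} factors over the blocks: the index sets satisfy $I_g\cap B_a=I_{g|_{B_a}}$, the Hessian is block diagonal so $\hess W|_{\Fix(\cdot)}=\prod_a\hess W_a|_{\Fix(\cdot)\cap B_a}$, the normalizing factors multiply, and the covering condition $I_g\cup I_h\cup I_{gh}=\{1,\dots,n\}$ is the conjunction over $a$ of the same condition inside each $B_a$. Consequently $\gamma_{g,h}=\prod_a\gamma^{(a)}_{g,h}$, where $\gamma^{(a)}$ is computed from $W_a$ and the restrictions $g|_{B_a},h|_{B_a}$, and since $(gh)|_{B_a}=g|_{B_a}h|_{B_a}$ the desired cocycle identity follows once it is established blockwise. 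This is exactly the structure anticipated by Axiom \ref{ax:sums} and Remark \ref{rem:sums}, and it is what makes the atomic lemmas of Section \ref{sec:Kreuzer-Skarke Classification} the right tools.

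It then remains to verify $\gamma^{(a)}_{g,h}\gamma^{(a)}_{gh,k}=\gamma^{(a)}_{g,hk}\gamma^{(a)}_{h,k}$ for a single Fermat, loop, or chain. The Fermat case $W_a=X^c$ is a finite check: $\Fix$ is either the whole line (identity) or the origin, and one sees directly that $\gamma^{(a)}$ vanishes unless one of $g,h,gh$ is trivial (for $c>2$), so both sides reduce to comparing products in which at least one factor equals $1$. For loops and chains I would use Lemma \ref{lem:Atomic potentials -- symmetries} to write each group element uniquely as $\prod\rho_i^{\alpha_i}$, Lemma \ref{lem:Atomic potentials - phase equals algebraic sum} to read off $\Fix$ from the phases, and Lemmas \ref{lem:Atomic potentials -- Milnor Ring} and \ref{lem:Atomic potentials -- Gmax invariants in twisted sector} to identify the top classes of the restricted potentials together with the proportionality constants appearing in the restriction of the Hessian. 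The goal is to show that both bracketings evaluate to the single symmetric quantity attached to the data $\Fix(g)\cap\Fix(h)\cap\Fix(k)\subseteq\Fix(ghk)$, so that the dependence on the intermediate locus disappears.

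The main obstacle is precisely this last point for chains (and loops). Unlike a Fermat summand, a chain does not split off its restricted variables as a decoupled sub-potential, so the restriction of $\hess W|_{\Fix(\cdot)}$ to a smaller coordinate subspace is not simply a factorized Hessian; whether it is a nonzero multiple of the top class there is governed by the delicate parity/consecutive-index condition of Lemma \ref{lem:Atomic potentials -- Milnor Ring} and by the tail structure $\Fix(gJ)=\{X_t,\dots,X_N\}$ of Lemma \ref{lem:Atomic potentials -- Gmax invariants in twisted sector}. Thus the two hard tasks are (a) showing that the two support conditions --- the conjunction of covering-and-nonvanishing for the bracketing $((g,h),k)$ versus $(g,(h,k))$ --- actually coincide, since the intermediate loci $\Fix(gh)\cap\Fix(k)$ and $\Fix(g)\cap\Fix(hk)$ generally differ from the triple intersection; and (b) checking that the surviving proportionality constants telescope to the same value. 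I expect (a), the equivalence of the two support conditions, to be the crux, and would organize its proof by a variable-by-variable (equivalently, tail-segment-by-tail-segment) analysis within each chain, tracking for each index which of the relevant group elements fixes it.
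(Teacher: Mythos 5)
There is a genuine gap, and it begins with your reading of Definition \ref{def:B-model multiplication}. You treat $\gamma_{g,h}$ as a \emph{scalar} (``the scalar cocycle identity''), obtained by restricting $\hess W|_{\Fix(gh)}$ to $\Fix(g)\cap\Fix(h)$ and comparing it with $\hess W|_{\Fix(g)\cap\Fix(h)}$. The paper's equation goes the other way: $\gamma_{g,h}$ is the element of $\Q_{gh}$ that multiplies the normalized Hessian of the restricted potential to produce the normalized Hessian on the \emph{larger} locus $\Fix(gh)$; it is a ring element (essentially the Hessian of the transverse directions), not a number. Concretely, take $W=x^3+y^3$, $g=(e^{2\pi i/3},1)$, $h=g^{-1}$: the covering condition holds, $\Fix(g)\cap\Fix(h)$ is the $y$-axis, $\Fix(gh)=\C^2$, and the definition reads $\gamma_{g,h}\cdot\tfrac{6y}{1}=\tfrac{36xy}{2}$, so $\gamma_{g,h}=3x$; your reading would set $x=0$ in $36xy$ and return $\gamma_{g,h}=0$. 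This is not cosmetic: the only case of associativity requiring an argument (all of $g,h,k$ nonidentity, where a nonzero left-hand side forces $gh=\id$) is settled in the paper precisely by the fact that $\gamma_{g,g^{-1}}$ is a polynomial in the variables \emph{not} fixed by $g$, so the $\Q_e$-module action on $1_k$ (restriction to $\Fix(k)$) annihilates it, $\gamma_{g,g^{-1}}1_k=0\in\Q_k$. A scalar-valued cocycle identity cannot even express this vanishing mechanism; with your $\gamma$'s you would be analyzing a different multiplication from the one the paper defines.

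Second, even granting your framing, the proof is not completed. You correctly identify the loop and chain cases as the crux, but you leave them as ``tasks (a) and (b)'' with only a proposed organization (a variable-by-variable analysis of the parity conditions in Lemmas \ref{lem:Atomic potentials -- Milnor Ring} and \ref{lem:Atomic potentials -- Gmax invariants in twisted sector}); no argument is given. The paper sidesteps all of this with one observation you never invoke: for an atomic potential, any diagonal symmetry fixing the first variable is the identity (for a loop, fixing any variable propagates around the cycle; for a chain, fixing $X_1$ propagates down the chain). Hence the covering condition $I_g\cup I_h\cup I_{gh}=\{1,\dotsc,n\}$ already forces one of $g$, $h$, $gh$ to be $\id$, the anticipated parity analysis never arises, and the whole case analysis collapses to: one of $g,h,k$ equal to $\id$ (trivial), or all nonidentity, where both bracketings vanish by the restriction argument above. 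Your blockwise factorization over atomic summands does agree with the paper's final reduction step, but the substance of the proof lies exactly in the atomic cases you did not carry out.
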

\begin{proof}
This definition obviously satisfies \eqref{eq:B model identity axiom}, since $1\in\Q_{g}$ pairs to unity with $\hess W|_{\Fix(g)}$.

It remains to check the associativity \eqref{eq:B model associativity axiom} of the putative cocycle $\gamma$.

We see here the benefit of restricting our attention to invertible potentials (sums of loops, chains, and Fermat types).

We first check associativity of multiplication when $W$ is of one of these atomic types.  The key point here is that if $k\in(\C^*)^N$ is a symmetry of $W$ fixing $y_1$, then $k$ acts trivially on all of $\C^N$.  So $1_g\star 1_h = \gamma_{g,h}1_{gh}$ can be non-zero only if one of $g$, $h$, or $gh$ is the identity.

If $g=\id$, $h=\id$, or $k=\id$ then associativity is obvious.

Suppose $g\neq \id$, $h\neq \id$ and $k\neq\id$.   We show that both sides of \eqref{eq:B model associativity axiom} vanish.  Consider the left hand side. If $gh\neq \id$ then by the above remark, $1_g\star 1_h=0$.  If $gh = \id$, the left hand side is $\gamma_{g,g^{-1}}1_k$. Now,  $\gamma_{g,g^{-1}}$ pairs with $\hess W|_{\Fix(g)}$, so depends on the variables not fixed by $g$ (in particular $y_1$).  Since $k\neq\id$, $y_1$ is not fixed by $k$, and $\gamma_{g,g^{-1}}1_k=0\in\Q_k$.  A similar argument applies to the right hand side.

Thus we have an associative multiplication on $\Q$ for $W$ a loop, chain, or Fermat potential.  In fact, we have shown furthermore that a triple-product vanishes unless one of the factors is in the identity sector, and the other two factors are in sectors corresponding to mutually inverse group elements.

This multiplication (Definition \ref{def:B-model multiplication}) extends to any invertible potential, as the product may be decomposed into contributions from each atomic summand, and associativity on the summands implies associativity for the whole invertible potential.
\end{proof}
In the next section, we show that the multiplication on the unprojected state space descends to a multiplication on invariants, without making any assumptions about the potential being of atomic type.
\subsubsection{Projecting to invariants}
Now we turn our attention to the $G$-invariants in $\Q$ for the determinant-twisted $G$ action.  We make the important restriction that $G\subseteq SL_N\C$, so that the $G$-invariants in $\Q_e$ are the same whether or not we twist by the determinant on $\Fix\id = \C^N$.  This means that the $\Q_e$-module structure on $\Q=\bigoplus_{g\in G} \Q_g$ descends to a $(\Q_e)^G$-module structure on the determinant-twisted $G$ invariants $\left(\bigoplus_{g\in G}\Q_g\right)^G$.  This hypothesis will be justified later when we see that admissible $A$-model orbifold groups correspond to subgroups of $SL_N\C$ on the $B$-side.

To see that the product descends to invariants, we prove the following lemma.
\begin{lemma}\label{lem:B-model projecting to invariants}
Suppose $H, K\in Q_e$ are monomials such that $H 1_h\in\Q_h$ and $K1_k\in\Q_k$ are (determinant-twisted) $G$-invariants.  Then $HK 1_h\star 1_k$ is a (determinant-twisted) $G-invariant$.
\end{lemma}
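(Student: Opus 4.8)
The plan is to verify invariance by a direct character computation, after using bilinearity to make the product explicit. Since $\star$ is $\Q_e$-bilinear and $\Q_h$, $\Q_k$ are cyclic $\Q_e$-modules generated by $1_h$, $1_k$, I would first write
\[
(H 1_h)\star (K 1_k) = HK\,(1_h\star 1_k) = \gamma_{h,k}\,\bigl(HK|_{\Fix(hk)}\bigr)\,1_{hk},
\]
so the assertion concerns the single element $\bigl(HK|_{\Fix(hk)}\bigr)1_{hk}\in\Q_{hk}$. If $\gamma_{h,k}=0$, or if the restricted monomial $HK|_{\Fix(hk)}$ vanishes (which happens precisely when a variable occurring in $H$ or $K$ is not fixed by $hk$), the product is zero and hence trivially invariant. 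So I may assume $\gamma_{h,k}\neq 0$ --- which by Definition \ref{def:B-model multiplication} forces the support condition $I_h\cup I_k\cup I_{hk}=\{1,\dots,n\}$ --- and that $HK|_{\Fix(hk)}\neq 0$.

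\textbf{The character computation.} Because $G$ acts diagonally it suffices to track the phase by which each $g\in G$ scales a monomial-times-volume-form. Writing $\theta_i^g$ for the phase of $g$ on $x_i$, the determinant-twisted action scales $M1_\ell$ (with $M=\prod x_i^{m_i}$ supported on $I_\ell$) by $\exp\bigl(2\pi i\sum_{i\in I_\ell}(m_i+1)\theta_i^g\bigr)$: the $m_i$ come from the monomial and the $+1$ from the volume form $\omega_{\Fix \ell}$. Invariance of $H 1_h$ and of $K 1_k$ then reads
\[
\sum_{i\in I_h}(\alpha_i+1)\theta_i^g\equiv 0,\qquad \sum_{i\in I_k}(\beta_i+1)\theta_i^g\equiv 0 \pmod 1
\]
for all $g$, where $H=\prod x_i^{\alpha_i}$, $K=\prod x_i^{\beta_i}$. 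Expanding the phase of $\bigl(HK|_{\Fix(hk)}\bigr)1_{hk}$ and subtracting these two relations, the contributions of the monomial exponents cancel (using that $\alpha_i>0$ forces $i\in I_{hk}$ in the non-vanishing case, and likewise for $\beta$), leaving a pure twist defect
\[
\delta_{h,k}(g):=\sum_{i\in I_{hk}}\theta_i^g-\sum_{i\in I_h}\theta_i^g-\sum_{i\in I_k}\theta_i^g.
\]
Thus the product is $G$-invariant if and only if $\delta_{h,k}(g)\equiv 0\pmod 1$ for every $g$; equivalently, the determinant-twist characters $g\mapsto\det(g|_{\Fix \ell})$ must be multiplicative along the product whenever $\gamma_{h,k}\neq 0$.

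\textbf{Vanishing of the defect --- the main obstacle.} The heart of the matter is to prove $\delta_{h,k}\equiv 0$, and here both hypotheses enter. Partition $\{1,\dots,n\}$ by which of $h$, $k$, $hk$ fix each variable; the support condition guarantees that no variable is moved by all three, and $G\subseteq SL_N\C$ supplies the global relation $\sum_i\theta_i^g\equiv 0$, which ties $\det(g|_{\Fix \ell})$ to the determinant on the complementary (normal) directions. Carrying this out, the monomial-free contributions collapse onto the variables fixed by $hk$ but by neither $h$ nor $k$, together with their complement, and these are exactly the classes one must kill. I expect this to be the crux, and the genuine difficulty is that $SL_N$-ness is a \emph{global} constraint whereas the fixed loci are organized blockwise. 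To resolve it I would exploit the invertible structure: a diagonal symmetry that fixes one variable of a loop or Fermat summand fixes that whole block, and on a chain summand a fixed locus is always a terminal segment. This rigidity forces the ambiguous variables to occur in complete blocks on which the competing determinants match up, so that the global $SL_N$ relation can be fed in to annihilate $\delta_{h,k}$.

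\textbf{Conclusion.} Once $\delta_{h,k}\equiv 0$ is established, the displayed character computation shows $\bigl(HK|_{\Fix(hk)}\bigr)1_{hk}$ is $G$-invariant, so the product of two monomial invariants is again invariant. Since the determinant-twisted $G$-action is diagonal, every invariant of $\Q_h$ is a sum of monomial invariants $H1_h$, and bilinearity of $\star$ then upgrades the lemma to: the product of any two invariants is invariant. This is exactly what is needed for $\star$ to descend to $\bigl(\bigoplus_{g}\Q_g\bigr)^G$. I would also note that for chains the bookkeeping of terminal segments shared unevenly between $h$, $k$, and $hk$ is the fussiest step, and I would streamline it by reducing, via the decoupling isomorphism of Remark \ref{rem:sums}, to the behaviour on a single atomic block before checking the three atomic cases directly.
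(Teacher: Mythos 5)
Your reduction to the case $\gamma_{h,k}\neq 0$ and your phase bookkeeping for $H$, $K$, and the volume forms all agree with the paper, but there is a genuine gap at the center of the argument: you treat the structure constant $\gamma_{h,k}$ as a $G$-invariant scalar, and it is not. By Definition \ref{def:B-model multiplication}, $\gamma_{h,k}$ is (up to the dimension constants) the ratio of $\hess W|_{\Fix(hk)}$ to $\hess W|_{\Fix(h)\cap\Fix(k)}$, hence a non-constant element of $\Q_{hk}$ supported on the variables in $I_{hk}\setminus(I_h\cap I_k)$. Since $\hess W|_{\Fix \ell}$ scales under $g$ by $\prod_{i\in I_\ell}g_i^{-2}$, the defining equation forces $g(\gamma_{h,k})=\prod_{i\in I_{hk}}g_i^{-2}\big/\prod_{i\in I_h\cap I_k}g_i^{-2}$, and this factor must be included in the phase of the product. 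Your ``defect'' $\delta_{h,k}(g)$ is the phase of $\bigl(HK|_{\Fix(hk)}\bigr)1_{hk}$ alone; the true phase is $\delta_{h,k}(g)$ plus the phase of $\gamma_{h,k}$, and it is this \emph{total} that the paper shows equals $\prod_i g_i^{-1}=\det(g)^{-1}=1$ using only the support condition and $G\subseteq SL_N\C$. In particular the paper's proof is completely general and never invokes the atomic (loop/chain/Fermat) structure, contrary to your proposed strategy.

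The plan you describe for the ``crux'' cannot be repaired, because $\delta_{h,k}\equiv 0$ is simply false under the hypotheses of the lemma. Take $W=x^3+y^3+z^3+w^3$ and $G=\langle g,h\rangle\subseteq SL_4\C$ with $g=(\omega,\omega,\omega,1)$, $h=(1,1,\omega,\omega^2)$, $\omega=e^{2\pi i/3}$; let $k=h^{-1}$, $H=x$, $K=y$. Then $x\,1_h$ and $y\,1_k$ are determinant-twisted invariants (the phase of $g^mh^n$ on each is $\omega^{3m}=1$), the support condition holds, $\gamma_{h,k}$ is a nonzero multiple of $zw$, and the product is a nonzero multiple of $xyzw\,1_e$, which is indeed $G$-invariant. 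Yet your defect is $\delta_{h,k}(g)=1-\tfrac23-\tfrac23=-\tfrac13\not\equiv 0 \pmod 1$; multiplicatively, $\prod_{I_{hk}}g_i\cdot\prod_{I_h}g_i^{-1}\cdot\prod_{I_k}g_i^{-1}=\omega^2\neq 1$, and it is exactly $g(\gamma_{h,k})=g(zw)=\omega$ that cancels it. So your criterion ``invariant iff $\delta_{h,k}\equiv 0$'' misclassifies this example, and no amount of blockwise analysis of loops, chains, and Fermats can prove the vanishing of a quantity that does not vanish. The fix is precisely the step your proposal omits: compute how $\gamma_{h,k}$ transforms from its defining equation and fold that into the character computation, after which the $SL_N$ hypothesis finishes the proof in one line.
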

\begin{proof}
The lemma is trivially true if $HK 1_h\star 1_k = 0$.  We may therefore suppose that for each $i\in\{1,\dotsc, n\}$, at least one of $h_i$, $k_i$ or $h_i k_i$ equals $1$.

$G$-invariance of the $H1_h$ and $K1_k$ yields
\begin{gather}
g(H) \prod_{i\in I_h} g_i = 1 \label{eq:H1_h G-invariant}\\
g(K) \prod_{i\in I_k} g_i = 1, \label{eq:K1_k G-invariant}
\end{gather}
where $g(H)$ denotes the phase of the action of $g$ on the monomial $H$, and similarly for $g(K)$.
We need to compute the action of $g$ on $HK 1_h\star 1_k$.

Since we assume $1_h\star 1_k \neq 0$, Equation \eqref{eq:B-model explicit cocycle definition} applies.  The phase of $g$ on either side of this relation must coincide, so
\[g(\gamma_{h,k}) = \frac{\prod_{i\in I_{hk}}g_i^{-2}}{\prod_{i\in I_h\cap I_k}g_i^{-2}}.\]

Then, using \eqref{eq:H1_h G-invariant} and \eqref{eq:K1_k G-invariant}, the phase of $g$ on
\[(H1_h)\star(K1_k) = HK\gamma_{h,k}1_{hk}\]
is
\begin{multline*}g(H)g(K)g(\gamma_{h,k})\prod_{i\in I_{hk}} g_i
= \prod_{i\in I_h} g_i^{-1}\prod_{i\in I_k} g_i^{-1}\frac{\prod_{i\in I_h\cap I_k}g_i^{2}}{\prod_{i\in I_{hk}}g_i^{2}}\prod_{i\in I_{hk}} g_i
= \prod_{i\in I_h\cup I_k} g_i^{-1}\prod_{i\in I_h\cap I_k}g_i\prod_{i\in I_{hk}}g_i^{-1}\\
= \prod_{i\in\{1,\dotsc,\,n\}} g_i^{-1}
=1
\end{multline*}
by the assumption $G\subseteq SL_N\C$.

So the $\star$-product of $G$-invariants is again $G$-invariant.
\end{proof}

\subsubsection{Pairing and Frobenius Algebra}
The pairing $\langle\,,\,\rangle$ on $\Q_{W,G}$ is the sum of the pairings $\Q_g\tensor\Q_{g^{-1}}\to \C$, which are induced by the residue pairing under the identification $\Q_g\cong \Q_{g^{-1}}$.

The orbifold Milnor ring (after projecting to $G$ invariants) is a Frobenius Algebra.  This follows from the definition of the pairing and the associativity of multiplication.

By construction, the above multiplication preserves the $G$-grading, and we will show in the next section that it preserves the $\mathbb{Q}$ bi-grading also.

\subsection{Bi-grading}\label{sec:Bi-grading}

Recall the Intriligator--Vafa grading \cite{IV}:
\[J_{\pm} = \pm \sum_{\Theta_i^h\notin\Z} (\Theta_i^h -\tfrac{1}{2}) + \sum_{\Theta_i^h\in\Z} (q_i-\tfrac{1}{2}),\]
and \[\hat{c} = \sum(1 - 2q_i)\]
We introduce the following bi-gradings for Landau--Ginzburg Theories, for a sector corresponding to a symmetry $h=(e^{2\pi i\Theta_1^h},\dotsc,e^{2\pi i\Theta_N^h})$ of the potential $W\in\C[X_1,\dotsc,X_N]$ having charges $q_1,\dotsc,q_N$:

\begin{equation}\label{eq:bigrading}
\begin{array}{|c||r@{\hspace{10pt} = \hspace{10pt}}l|}\hline
Q^A_+&J_+ + \tfrac{\hat{c}}{2}& \sum_{\Theta^h_i\notin \Z}(\Theta_i^h - q_i)\\\hline
Q^A_-&-J_- + \tfrac{\hat{c}}{2}&\sum_{\Theta^h_i\notin \Z}(\Theta_i^h - q_i) + \sum_{\Theta_i^h\in\Z}(1-2q_i)\\\hline
\multicolumn{3}{c}{}\\\hline
Q^B_+&J_+ + \tfrac{\hat{c}}{2}&\sum_{\Theta^h_i\notin \Z}(\Theta_i^h - {q}_i)\\\hline
Q^B_-&J_- + \tfrac{\hat{c}}{2}&\sum_{\Theta_i^h\notin\Z} (1 - \Theta_i^h - {q}_i)\\\hline
\end{array}
\end{equation}
\begin{remark}
The grading above is an `external' grading.  The $A$ and $B$ models have `internal' gradings coming from the weighted degree of monomials in the Milnor rings which are summands in the state space.  A monomial of weighted degree $p$ has an $A$-model bi-grading of $(p,-p)$ and a $B$-model bi-grading of $(p,p)$.

Note
\begin{equation}\label{eq:bigrading+-}
\begin{array}{|c||r@{\hspace{10pt} = \hspace{10pt}}l|}\hline

Q^A_+ + Q^A_-&J_+ - J_- +\hat{c}& \dim\Fix h + 2\sum(\Theta_i^h-q_i)\\\
Q^A_+ - Q^A_-&J_+ + J_- & \sum_{\Theta_i^h\in\Z}(2q_i - 1)
\\\hline

\multicolumn{3}{c}{}\\\hline
Q^B_+ + Q^B_-&J_+ + J_- +\hat{c}& \sum_{\Theta_i^h\notin\Z}(1-2{q}_i)\\
Q^B_+ - Q^B_-&J_+ - J_- &2\sum_{\Theta_i^h\notin\Z}(\Theta_i^h-{q}_i) + \sum_{\Theta_i^h\notin\Z}(2{q}_i - 1)\\\hline
\end{array}
\end{equation}

So this grading recovers the $A$-model grading of \cite{FJR1} as the sum of the $A$-model bi-gradings.  The internal $A$-model bi-grading has
following interpretation. $H^{N}(\C^N, W^{\infty}, {\C})$ has a mixed Hodge structure which defines Hodge grading and Hodge decompostion
$$H^{N}(\C^N, W^{\infty}, {\C})=\bigoplus_p H^{p, N-p}.$$
Under the isomorphism to the  Milnor ring, $H^{p, N-p}$ corresponds to the degree $p$-component of Milnor ring. We have absorbed $N$ into
external grading, so the internal bi-grading is $(p, -p)$.

It is desirable to show that the \emph{difference} of the $A$-model bi-gradings (corrected by twice the internal grading) is preserved under $A$-model multiplication.  In full generality, this has been intractable because it would demand a more precise understanding of the Ramond sector contribution to $A$-model multiplication than is currently available.  However, in the case of maximal $A$-model symmetry group, the desired fact follows from Theorem \ref{thm:Mirror Theorem for G_max} and the fact that the mirror map of Section \ref{sec:Mirror Map} preserves the bi-grading.
\end{remark}
\begin{lemma}
  The $B$-model multiplication (Def. \ref{def:B-model multiplication}) preserves the bigrading $(Q^B_+, Q^B_-)$ of Eq. \eqref{eq:bigrading}.
\end{lemma}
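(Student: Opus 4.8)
The plan is to reduce the assertion to a single numerical identity about the weighted degree of the structure constants $\gamma_{g,h}$, and then to verify that identity by sorting the variables according to the integrality of their phases. First I would use the $\Q_e$-module structure to write a general homogeneous element of $\Q_g$ as $H\,1_g$, where $H$ is a monomial in the fixed variables of weighted degree $p_H$; by the remark following \eqref{eq:bigrading} its bigrading is the sum of the internal grading $(p_H,p_H)$ and the external sector grading $(Q^B_+(g),Q^B_-(g))$ of \eqref{eq:bigrading}, the latter depending only on $g$. Since $W$ is quasi-homogeneous its Jacobian ideal is homogeneous, so every $\q{W|_{\Fix(\cdot)}}$ is a graded ring and weighted degree is additive under multiplication (a product of homogeneous elements is homogeneous or zero). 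Because $(H\,1_g)\star(K\,1_h)=HK\,\gamma_{g,h}\,1_{gh}$, it therefore suffices to treat the case $1_g\star 1_h\neq 0$ and to prove the two identities
\[\deg\gamma_{g,h}+Q^B_\pm(gh)=Q^B_\pm(g)+Q^B_\pm(h),\]
where $\deg$ denotes weighted degree; additivity of the internal grading then accounts for the factors $H$ and $K$.

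Next I would read off $\deg\gamma_{g,h}$ from the defining equation \eqref{eq:B-model explicit cocycle definition}. The normalising denominators there are scalars (weighted degree $0$), and $\hess W|_{\Fix g}$ is the top-degree generator of $\q{W|_{\Fix g}}$, of weighted degree $\sum_{i\in I_g}(1-2q_i)$ (and similarly on $\Fix(g)\cap\Fix(h)$ and on $\Fix(gh)$). Since $\gamma_{g,h}$ is forced to be homogeneous by this relation between homogeneous Hessian generators, a degree count yields
\[\deg\gamma_{g,h}=\sum_{i\in I_{gh}}(1-2q_i)-\sum_{i\in I_g\cap I_h}(1-2q_i)=\sum_{i\in I_{gh}\setminus(I_g\cap I_h)}(1-2q_i),\]
using $I_g\cap I_h\subseteq I_{gh}$.

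Finally I would finish by variable-by-variable bookkeeping. The nonvanishing condition $I_g\cup I_h\cup I_{gh}=\{1,\dotsc,n\}$ of \eqref{eq:B-model explicit cocycle definition} says precisely that no index has all of $\Theta_i^g,\Theta_i^h,\Theta_i^{gh}$ nonintegral; equivalently there is no $i$ with $\Theta_i^g,\Theta_i^h\notin\Z$ and $\Theta_i^g+\Theta_i^h\notin\Z$. Hence $I_{gh}\setminus(I_g\cap I_h)$ is exactly the set of indices $i$ with $\Theta_i^g,\Theta_i^h\notin\Z$ but $\Theta_i^g+\Theta_i^h=1$, so that $\Theta_i^{gh}=0$. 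For an index with exactly one of $\Theta_i^g,\Theta_i^h$ integral, $\Theta_i^{gh}$ equals the unique nonzero phase and its contribution to $Q^B_\pm(g)+Q^B_\pm(h)$ cancels its contribution to $Q^B_\pm(gh)$; indices with both phases integral contribute nothing anywhere. For each $i\in I_{gh}\setminus(I_g\cap I_h)$ the $gh$-summand contributes nothing, while $\Theta_i^g+\Theta_i^h=1$ gives $(\Theta_i^g-q_i)+(\Theta_i^h-q_i)=1-2q_i$ in the $Q^B_+$ identity and $(1-\Theta_i^g-q_i)+(1-\Theta_i^h-q_i)=1-2q_i$ in the $Q^B_-$ identity. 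Summing over such $i$ reproduces exactly $\deg\gamma_{g,h}$, establishing both identities.

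The main obstacle is the degree computation for $\gamma_{g,h}$: the two Hessian generators in \eqref{eq:B-model explicit cocycle definition} live a priori in different Milnor rings, namely $\q{W|_{\Fix(g)\cap\Fix h}}$ and $\q{W|_{\Fix(gh)}}$, so one must phrase the degree count intrinsically and confirm that $\gamma_{g,h}$ is genuinely \emph{homogeneous} of the stated degree rather than merely pairing correctly up to lower-order terms. Once homogeneity and the degree are secured, the remaining combinatorics is the routine term-by-term match above, and it is visibly symmetric between $Q^B_+$ and $Q^B_-$.
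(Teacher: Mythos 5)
Your proposal is correct and takes essentially the same approach as the paper's proof: both reduce the statement to the structure constants $\gamma_{g,h}$, compute $\deg\gamma_{g,h}$ as the difference of the top (Hessian) degrees of the two Milnor rings in Equation \eqref{eq:B-model explicit cocycle definition}, and finish with a variable-by-variable case analysis on the integrality of $\Theta_i^g$, $\Theta_i^h$, $\Theta_i^{gh}$, using the nonvanishing condition to exclude the all-nonintegral case. The only cosmetic difference is that you verify the $Q^B_+$ and $Q^B_-$ identities separately, whereas the paper equivalently checks the sum $Q^B_++Q^B_-$ and the difference $Q^B_+-Q^B_-$.
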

\begin{proof}
To show that the multiplication on the orbifold $B$-model respects the bi-grading \eqref{eq:bigrading}, it suffices to show it respects the bi-grading \eqref{eq:bigrading+-}, accounting of course for the internal $(p,p)$ grading  on the $B$-model.

Because the $B$-model is a module over $\Q_e$, the contribution of the internal grading is obviously additive under multiplication.  The contribution of
\begin{equation}\label{eq:degB+}
\deg_+^B:=\sum_{\Theta_i^h\notin\Z}(1-2q_i)
\end{equation} is additive under multiplication because if $(1_g\star 1_h)\neq 0$,
\begin{align*}
\deg_{+}^B(1_g\star 1_h)&= Q^B_++Q^B_- + 2\deg(\hess W|_{\Fix{gh}})-2\deg(\hess W|_{\Fix(g)\cap\Fix(h)})\\
&=  \sum_{\Theta_i^{gh}\notin\Z}(1-2q_i) +2 \sum_{\Theta_i^{gh}\in\Z}(1-2q_i) -2\sum_{\Theta_i^{g},\Theta_i^h\in\Z}(1-2q_i),
\end{align*}
while
\begin{align*}
\deg_{+}^B(1_g) + \deg_{+}^B(1_h) &=  \sum_{\Theta_i^{g}\notin\Z}(1-2q_i)+\sum_{\Theta_i^{h}\notin\Z}(1-2q_i).
\end{align*}
We check by cases that these are equal (in fact in the two sums, the summand corresponding to the $i^\text{th}$ variable is the same).
\begin{itemize}
\item If $\Theta_i^g\notin\Z $ and $\Theta_i^h\notin \Z$, then $\Theta_i^{gh} \in\Z$ since every variable is fixed by $g$, $h$, or $gh$ for non-zero $B$-model product.  So the contribution to the right of each equation is $2(1-2q_i)$.
\item If $\Theta_i^{g} \in\Z$ and $\Theta_i^{h} \notin\Z$ (or vice versa), then $\Theta_i^{gh} \notin\Z$, and the contribution to the right of each equation is $1-2q_i$.
\item If $\Theta_i^{g} \in\Z$ and $\Theta_i^{h} \in\Z$, then the contribution to the right of each equation is $0$.
\end{itemize}

We now check that the $B$-model multiplication respects
\begin{equation}\label{eq:degB-}
deg_{-}^B :=2\sum_{\Theta_i^h\notin\Z}(\Theta_i^h-{q}_i) + \sum_{\Theta_i^h\notin\Z}(2{q}_i - 1)=\sum_{\Theta_i^h\notin\Z}(2\Theta_i^h-1) .
\end{equation}
\begin{remark}
The internal grading doesn't contribute to $\deg_{-}^B$, as the difference of the $(p,p)$ bi-grading is zero).
\end{remark}

Now
\[
\deg_{-}^B(1_g\star 1_h)= \sum_{\Theta_i^{gh}\notin\Z}(2\Theta_i^{gh}-1),
\]
and
\[
\deg_{-}^B(1_g) + \deg_{-}^B(1_h) =\sum_{\Theta_i^{g}\notin\Z}(2\Theta_{i}^g-1)+\sum_{\Theta_i^{h}\notin\Z}(2\Theta_i^h-1).
\]
Again the two sums match up term by term:
\begin{itemize}
\item If $\Theta_i^g\notin\Z $ and $\Theta_i^h\notin \Z$, then $\Theta_i^{gh} \in\Z$ since every variable is fixed by $g$, $h$, or $gh$ for non-zero $B$-model product.  Therefore $\Theta_i^g+\Theta_i^h=1$, and the contribution to the right of each equation is 0.
\item If $\Theta_i^{g} \in\Z$ and $\Theta_i^{h} \notin\Z$, then $\Theta_i^{gh} = \Theta_i^h\notin\Z$, and the contribution to the right of each equation is $2\Theta_i^h-1$.  Similarly if
$\Theta_i^{h} \in\Z$ and $\Theta_i^{g} \notin\Z$.\item If $\Theta_i^{g} \in\Z$ and $\Theta_i^{h} \in\Z$, then the contribution to the right of each equation is $0$.
\end{itemize}
\end{proof}
\subsection{Relation between $A$ and $B$ model for a fixed singularity}
Note that the state spaces of the $A$ and $B$ models for a fixed singularity are isomorphic as vector spaces.  For its bi-grading,
$$deg^A_+=deg^B_+,\phantom{XX} deg^A_-=-deg^B_-+\hat{c}.$$
This simple relation is  particularly relevent in the Calabi-Yau case ($\sum q_i = 1$) where the same relation holds for the Calabi-Yau
hypersurface defined by $W=0$, giving further evidence of Landau--Ginzburg mirror symmetry.
\section{Mirror Symmetry for State Spaces}\label{sec:Mirror Symmetry}
\subsection{Duality of Groups}\label{sec:Duality of Groups}
Following Berglund--H\"ubsch \cite{BH}, we consider the transposed singularity
\[W^T= \sum_{i=1}^N \prod_{j=1}^N Y_j^{a_{ji}},\]
which has exponent matrix $A^T$.

This suggests writing \begin{equation}\label{eq:A inverse rows = rho bar}
A^{-1} =
\left(\begin{array}{c}
\overline\rho_1 \\\hline
\overline\rho_2 \\\hline
\vdots \\\hline
\overline\rho_N
\end{array}\right),
\phantom{XX}
\text{ with row vectors }
\phantom{XX}
\overline\rho_i=
\begin{bmatrix}
\overline\varphi^{(i)}_1 & \cdots & \overline\varphi^{(i)}_N
\end{bmatrix}.
\end{equation}
Comparing to Equation \eqref{eq:A inverse columns = rho}, we see $\overline\varphi^{(i)}_j = \varphi^{(j)}_i$.

As above, each $\overline\rho_k$ is a symmetry of $W^T$ and generate $G^\text{max}_{W^T}$, where
\[\overline\rho_kY_j = \exp(2\pi i\overline\varphi^{(k)}_j)Y_j,\]
and the exponential grading operator is
$\overline J = \prod_{i=1}^N \overline \rho_i$.

The following lemma is straightforward, but essential to what follows.
\begin{lemma}\label{lem:dual group corresponds to invariant monomials}
\[\prod_{i=1}^N \rho_i^{\alpha_i} \text{ preserves the monomial } \prod_{j=1}^N X_j^{r_j}\]
 if and only if
\[\prod_{j=1}^N \overline \rho_j^{r_j} \text{ preserves the monomial } \prod_{i=1}^N Y_i^{\alpha_i}.\]
\end{lemma}
\begin{proof}
Both statements are equivalent to
\[\begin{bmatrix}
r_1, & \dotsc &, r_N
\end{bmatrix}
A_W^{-1}
\begin{bmatrix}
\alpha_1\\
\vdots\\
\alpha_N
\end{bmatrix}
\in \Z\]
\end{proof}
In particular, since each $\overline\rho_k$ preserves every monomial $\prod_{i}Y_i^{a_{ij}}$ appearing in $W^T$, we have that
$\prod_{i} \rho_i^{a_{ij}}$ preserves every $X_k$.  That is:
\begin{corollary}\label{lem:Skarke Observation}
Let $W = \sum_{i=1}^N\prod_{j=1}^N X_j^{a_{ij}}$ be a non-degenerate, invertible singularity with exponent matrix $A=(a_{ij})$.  Let the symmetry $\rho_k$ be given by the $k$th column of $A^{-1}$ as above.

Then \[\prod_{i=1}^N \rho_i^{a_{ij}} = 1\]
for every $j\in\{1,\dotsc, n\}$.
\end{corollary}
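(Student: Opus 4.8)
The plan is to reduce the claim that $\prod_{i=1}^N \rho_i^{a_{ij}}$ equals the trivial symmetry to the statement that it fixes each variable $X_k$ separately, and then to verify the latter by a single application of Lemma~\ref{lem:dual group corresponds to invariant monomials}. Recall that a diagonal symmetry is the identity precisely when it acts with integer phase on every coordinate, so it suffices to show that for each $k\in\{1,\dots,N\}$ the element $\prod_{i=1}^N \rho_i^{a_{ij}}$ preserves the degree-one monomial $X_k$.

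First I would identify $\prod_{i=1}^N Y_i^{a_{ij}}$ as a monomial of $W^T$. Writing $W^T=\sum_{m=1}^N\prod_{l=1}^N Y_l^{a_{lm}}$, the summand indexed by $m=j$ is exactly $\prod_{i=1}^N Y_i^{a_{ij}}$. Since $\overline\rho_k$ is a diagonal symmetry of $W^T$ and the monomials of $W^T$ are linearly independent, $\overline\rho_k$ must scale each monomial trivially; in particular $\overline\rho_k$ preserves $\prod_{i=1}^N Y_i^{a_{ij}}$.

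Now I would apply Lemma~\ref{lem:dual group corresponds to invariant monomials} with the choices $\alpha_i=a_{ij}$ and $r_m=\delta_{m,k}$, so that $\prod_m X_m^{r_m}=X_k$ and $\prod_i Y_i^{\alpha_i}=\prod_i Y_i^{a_{ij}}$. The lemma then asserts that $\prod_{i=1}^N\rho_i^{a_{ij}}$ preserves $X_k$ if and only if $\overline\rho_k$ preserves $\prod_i Y_i^{a_{ij}}$, and the right-hand side was just verified. As $k$ was arbitrary, $\prod_{i=1}^N\rho_i^{a_{ij}}$ fixes every variable and is therefore the identity.

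There is essentially no obstacle beyond bookkeeping: the one thing to watch is the index convention relating $A$, $A^T$, and the rows versus columns of $A^{-1}$ (via $\overline\varphi^{(i)}_j=\varphi^{(j)}_i$), which must be tracked so that $\prod_i Y_i^{a_{ij}}$ is matched with a monomial of $W^T$ and not of $W$. As a sanity check — and an alternative self-contained route that bypasses the lemma — one may compute the phase of $\prod_i\rho_i^{a_{ij}}$ on $X_k$ directly as $\sum_{i=1}^N a_{ij}\,\varphi^{(i)}_k=\sum_{i=1}^N (A^{-1})_{ki}A_{ij}=(A^{-1}A)_{kj}=\delta_{kj}\in\Z$, confirming that the action is trivial.
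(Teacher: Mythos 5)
Your proof is correct and follows essentially the same route as the paper: the paper likewise notes that each $\overline\rho_k$ preserves every monomial $\prod_i Y_i^{a_{ij}}$ of $W^T$ and then invokes Lemma~\ref{lem:dual group corresponds to invariant monomials} (with the exponent vector $a_{ij}$ on one side and the single variable $X_k$ on the other) to conclude that $\prod_i \rho_i^{a_{ij}}$ fixes every $X_k$, hence is the identity. Your added matrix computation $\sum_i a_{ij}\varphi^{(i)}_k = (A^{-1}A)_{kj} = \delta_{kj}$ is a nice self-contained check but not a substantive departure.
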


\begin{remark}
In \cite{Kreuzer}, this observation is attributed to Skarke in the special case of `Loop potentials'.
\end{remark}
\begin{definition}\label{def:Dual group definition}
We define dual group $G^T$ as
\begin{equation}
G^T := \left\{\prod_{i=1}^N \overline \rho_i^{r_i} \left|  \, [r_1, \cdots, r_N] A^{-1}_W \begin{bmatrix}
a_1\\
\vdots\\
a_N
\end{bmatrix}\in {\Z}
\text{ for all } \prod_{i=1}^N\rho_i^{a_i}\in G
\right.\right\}.
\end{equation}
\end{definition}
If $g=\prod_{i=1}^N \rho^{a'_i}_i$ is a different presentation, $\prod_{i=1}^N \rho^{a_i-a'_i}_i=1$. Hence,
\[A^{-1}_W [a_1-a'_1, \cdots, a_N-a'_N]^T\in \Z^{N}.\]
Therefore, the above definition is independent of presentation of elements of $G$.
\begin{lemma}\label{lem:(G^T)^T = G}
Let $G$ be a group of diagonal symmetries of the non-degenerate invertible potential $W$, and $G^T$ the dual group of symmetries of $W^T$.  Then
	\[(G^T)^T = G.\]
\end{lemma}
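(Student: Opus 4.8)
The plan is to recast the defining condition of the dual group as the orthogonal complement with respect to a perfect pairing of finite abelian groups, after which $(G^T)^T = G$ becomes the standard double-annihilator identity. First I would put the two maximal symmetry groups in lattice coordinates. By the remark following Equation \eqref{eq:A inverse columns = rho}, every element of $G^{\text{max}}_W$ has the form $\prod_{j=1}^N \rho_j^{s_j}$, and two integer vectors $s,s'$ represent the same symmetry exactly when $A^{-1}(s-s')\in\Z^N$, i.e. $s-s'\in A\Z^N$. Thus $s\mapsto\prod_j\rho_j^{s_j}$ gives an isomorphism $G^{\text{max}}_W\cong\Z^N/A\Z^N$, and symmetrically $r\mapsto\prod_i\overline\rho_i^{r_i}$ gives $G^{\text{max}}_{W^T}\cong\Z^N/A^T\Z^N$. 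In these coordinates Lemma \ref{lem:dual group corresponds to invariant monomials} shows that the condition in Definition \ref{def:Dual group definition} is precisely the vanishing in $\mathbb{Q}/\Z$ of
\[B(r,s):=r^T A^{-1} s \pmod{\Z}.\]

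Second I would check that $B$ is a perfect pairing $B \colon (\Z^N/A^T\Z^N)\times(\Z^N/A\Z^N)\to\mathbb{Q}/\Z$. Well-definedness is immediate, since $r^T A^{-1}(Av)=r^T v\in\Z$ and $(A^Tw)^T A^{-1}s=w^T s\in\Z$. Non-degeneracy is elementary linear algebra over $\Z$: if $B(r,s)\in\Z$ for all $r\in\Z^N$ then $A^{-1}s\in\Z^N$, forcing $s\in A\Z^N$, and symmetrically on the other factor. Because $|\Z^N/A\Z^N|=|\det A|=|\det A^T|=|\Z^N/A^T\Z^N|$, non-degeneracy upgrades to perfectness: the induced homomorphism $\Z^N/A^T\Z^N\to\mathrm{Hom}(\Z^N/A\Z^N,\mathbb{Q}/\Z)$ is an injection between finite groups of equal order, hence an isomorphism. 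Under this dictionary $G^T$ is exactly the annihilator $G^{\perp}=\{h:B(h,g)=0\text{ for all }g\in G\}$.

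Third I would identify $(G^T)^T$ with the double annihilator. Applying Definition \ref{def:Dual group definition} to the pair $(W^T,G^T)$ --- whose exponent matrix is $A^T$, so that the roles of $\rho$ and $\overline\rho$ are swapped --- produces the pairing $B'(s,r)=s^T(A^T)^{-1}r$, which equals $B(r,s)$ because a scalar is its own transpose. Hence $(G^T)^T=(G^{\perp})^{\perp}$, the annihilator of $G^{\perp}$ taken back inside $G^{\text{max}}_W$.

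Finally the result follows from the double-annihilator theorem for perfect pairings of finite abelian groups. The inclusion $G\subseteq(G^{\perp})^{\perp}$ is immediate from the definitions, while perfectness gives $|G^{\perp}|=|G^{\text{max}}_{W^T}|/|G|$; applying this twice and using $|G^{\text{max}}_W|=|G^{\text{max}}_{W^T}|$ yields $|(G^{\perp})^{\perp}|=|G|$, so the inclusion is an equality. I expect the only genuine content --- and hence the main obstacle --- to be the perfectness of $B$; everything downstream is formal. In particular, since $G\subseteq(G^T)^T$ holds for free, one may finish purely by the cardinality count and avoid invoking the abstract double-annihilator statement altogether.
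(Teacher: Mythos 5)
Your proof is correct, but it takes a genuinely different route from the paper's. The paper's own argument is invariant-theoretic and very short: it notes that $G\subseteq (G^T)^T$ is immediate and that, because elements of $G^T$ correspond exactly to $G$-invariant monomials (Lemma \ref{lem:dual group corresponds to invariant monomials}), the invariant rings $\mathbb{C}[X_1,\dotsc,X_N]^G$ and $\mathbb{C}[X_1,\dotsc,X_N]^{(G^T)^T}$ coincide; it then passes to the fraction field and invokes Galois theory (Artin's theorem: a finite group of field automorphisms is recovered as the full automorphism group over its fixed field) to upgrade equality of fixed fields to equality of groups. You instead make the duality structural: identifying $G^{\text{max}}_W\cong\mathbb{Z}^N/A\mathbb{Z}^N$ and $G^{\text{max}}_{W^T}\cong\mathbb{Z}^N/A^T\mathbb{Z}^N$, exhibiting $B(r,s)=r^TA^{-1}s \bmod \mathbb{Z}$ as a perfect pairing of finite abelian groups, recognizing $G^T$ as the annihilator $G^\perp$, and closing with the trivial inclusion plus the cardinality count $|G^\perp|=|\det A|/|G|$. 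Both proofs share the easy inclusion $G\subseteq (G^T)^T$ and differ only in how they force the reverse inclusion: the paper by field theory, you by counting. Your route is longer to set up but buys more: it shows $G\mapsto G^T$ is an order-reversing bijection of subgroup lattices and yields the index formula $|G^T|=|\det A|/|G|$, neither of which falls out of the paper's argument without additional work, and it is self-contained finite-abelian-group theory requiring no facts about field extensions. The paper's route is shorter and stays closer to the monomial description of $G^T$ that is used elsewhere (e.g.\ in the admissibility proposition, which reuses the same invariant-ring trick). One small remark: your appeal to Lemma \ref{lem:dual group corresponds to invariant monomials} in the first step is not strictly necessary, since Definition \ref{def:Dual group definition} already states the membership condition as $[r_1,\cdots,r_N]A_W^{-1}[a_1,\dotsc,a_N]^T\in\mathbb{Z}$; that lemma is precisely what the paper uses to translate the condition into invariant monomials, which is the step your argument bypasses.
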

\begin{proof}
  It is clear from the definition that $G\subseteq (G^T)^T$ and $\C[X_1,\dotsc,X_N]^G \subseteq \C[X_1,\dotsc,X_N]^{(G^T)^T}$.  This implies that $G$ and $(G^T)^T$ have equal invariant rings, and since the actions on $\C[X_1,\dotsc, X_N]$ extend to actions on the fraction field with the same fixed field, it follows from field theory that $G=(G^T)^T$.
\end{proof}
It is also obvious ${1}^T=G^\text{max}$. Now we compute $\br{J}^T$.
Since $J=\prod_{i=1}^N\rho_i$,  $h=\prod_{i=1}^N \bar{\rho}^{r_i}_i \in \br{J}^T$ if and only if $\sum_i r_i q_i\in \Z$.  Since $\sum_i r_i q_i$ is
precisely the phase of $\det(h)$,  we have $\br{J}^T=SL_N\C\cap G^\text{max}_{W^T}$.

This explains the $SL$ restriction made in the proof of Lemma \ref{lem:B-model projecting to invariants} that the orbifold $B$-model multiplication descends to the invariants under the action of the orbifold group.

We can use the argument from the proof of \ref{lem:(G^T)^T = G} to settle a question suggested in \cite{FJR1}, namely whether any diagonal symmetry group containing $J$ satisfies the following definition of admissible groups.
\begin{definition}[\cite{FJR1} Defn 2.3.2]
We say that a subgroup $G\leq G_W^\text{max}$ is \emph{admissible} or is an \emph{admissible group of Abelian symmetries of $W$} if there exists a Laurent polynomial $Z$, quasi-homogeneous with the same weights $q_i$ as $W$, but with no monomials in common with $W$, and such that $G= G_{W+Z}$.
\end{definition}
\begin{proposition}
For $W\in\C[X_1,\dotsc,X_N]$ a non-degenerate (not-necessarily invertible) potential, any group of diagonal symmetries of $W$ containing $J$ is admissible.
\end{proposition}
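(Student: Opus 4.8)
The plan is to realize $G$ as the group of diagonal symmetries fixing every monomial of a polynomial $W+Z$, by adjoining to $W$ finitely many new monomials whose characters exactly cut $G_W^\text{max}$ down to $G$. Throughout I identify a Laurent monomial $X^r=\prod_j X_j^{r_j}$ ($r\in\Z^N$) with the character $\chi_r$ of $(\C^*)^N$ it defines, $\chi_r(g)=\prod_j g_j^{r_j}$; a diagonal symmetry $g$ preserves a polynomial if and only if $\chi_m(g)=1$ for each of its monomials $m$, since distinct monomials are linearly independent. Consequently $G_{W+Z}$ is the common kernel, inside $G_W^\text{max}$, of the characters attached to the monomials of $Z$ (those of $W$ cut out precisely $G_W^\text{max}$). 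I record two facts. First, because $J\in G$, any character trivial on $G$ has integer weight: if $\chi_r|_G=1$ then $1=\chi_r(J)=e^{2\pi i\sum_j r_jq_j}$, so $\mathrm{wt}(X^r):=\sum_j r_jq_j\in\Z$. Second, the lattice $L_0:=\setp{s\in\Z^N}{\chi_s|_{G_W^\text{max}}=1,\ \mathrm{wt}(X^s)=0}$ has rank $N-1$ whenever $N\geq 2$: the $G_W^\text{max}$-invariant exponents form a finite-index (hence rank-$N$) sublattice of $\Z^N$ on which $\mathrm{wt}$ is integer-valued and surjects onto $\Z$ (it hits $1$ at any monomial of $W$), so its kernel has rank $N-1$.

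Next I reduce to the nontrivial case. If $G=G_W^\text{max}$ — which is in particular forced when $N=1$, since then $G_W^\text{max}=\br{J}$ — take $Z=0$, a Laurent polynomial vacuously sharing no monomial with $W$ and satisfying $G_{W+0}=G_W^\text{max}=G$. So assume $G\subsetneq G_W^\text{max}$, whence $N\geq 2$. The finite abelian quotient $Q:=G_W^\text{max}/G$ is then nontrivial, and its Pontryagin dual separates points; pulling back, I obtain finitely many characters $\chi_1,\dots,\chi_k$ of $G_W^\text{max}$, each trivial on $G$, with $\bigcap_i\ker\chi_i=G$ (for instance let the $\chi_i$ range over all characters of $Q$). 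This separation is precisely the dual of the field-theoretic fact used in the proof of Lemma \ref{lem:(G^T)^T = G}: a diagonal group is determined by the monomials it fixes, equivalently by its invariant field.

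It remains to realize the $\chi_i$ by admissible monomials, which is the crux. Each $\chi_i$ is the restriction of some $\chi_{r_i}$ with $r_i\in\Z^N$, since restriction of characters from $(\C^*)^N$ to the finite subgroup $G_W^\text{max}$ is surjective. By the first recorded fact $w_i:=\mathrm{wt}(X^{r_i})\in\Z$, so replacing $X^{r_i}$ by $X^{r_i}\cdot m_0^{1-w_i}$ for a fixed monomial $m_0$ of $W$ (which is $G_W^\text{max}$-invariant of weight $1$) produces a weight-$1$ Laurent monomial with the same restriction $\chi_i$ to $G_W^\text{max}$. Finally, since translating an exponent vector by an element of $L_0$ alters neither its weight nor its restriction to $G_W^\text{max}$, and $L_0$ is infinite (rank $N-1\geq 1$), I may translate further to arrange that the resulting monomials $n_1,\dots,n_k$ are pairwise distinct and distinct from every monomial of $W$. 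Setting $Z:=\sum_{i=1}^k n_i$ yields a quasi-homogeneous weight-$1$ Laurent polynomial sharing no monomial with $W$, and by the opening remarks $G_{W+Z}=G_W^\text{max}\cap\bigcap_i\ker\chi_i=G$, so $G$ is admissible.

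The main obstacle is this last realization step: producing the separating characters is immediate, but they must be represented by genuine weight-$1$ monomials disjoint from $W$, and both features depend on $J\in G$. It is exactly the integrality of the weights $w_i$ — forced by $\chi_i(J)=1$ — that permits normalization to weight $1$ using powers of a monomial of $W$, while the rank-$(N-1)$ freedom in $L_0$ supplies the room to avoid $W$'s monomials. The only thing to verify carefully is that the normalization and subsequent translations never merge two intended characters nor reintroduce a monomial of $W$; but with $L_0$ infinite this is a finite avoidance problem against infinitely many choices, so it presents no real difficulty.
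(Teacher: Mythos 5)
Your proof is correct, but it takes a genuinely different route from the paper's. The paper works with invariant rings: it takes $Z$ to be the sum of the monomial generators of $\C[X_1,\dotsc,X_N]^G$ that are not divisible by monomials of $W$, argues that any diagonal symmetry group $H$ of $W+Z$ containing $G$ then satisfies $\C[X_1,\dotsc,X_N]^G \subseteq \C[X_1,\dotsc,X_N]^H$, and concludes $G=H$ by the same fraction-field/Galois argument used to prove Lemma \ref{lem:(G^T)^T = G}; only afterwards does it correct the degrees of the monomials of $Z$ to $1$ by a (negative) power of a monomial of $W$, with integrality of those degrees supplied by $J\in G$ exactly as in your argument. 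You instead work dually, at the level of characters: you separate $G$ inside $G_W^\text{max}$ by the characters of the finite quotient $Q=G_W^\text{max}/G$, lift each one to a Laurent monomial using surjectivity of the restriction map $\Z^N \to \widehat{G_W^\text{max}}$, normalize to weight $1$ (the same essential use of $J$), and translate by the rank-$(N-1)$ lattice $L_0$ to keep the monomials distinct from one another and from those of $W$. What your route buys: the equality $G_{W+Z}=G$ is immediate by construction (common kernel of separating characters), so you bypass both the finite generation of the invariant ring and the field-theory rigidity step; you also confront explicitly the collision issue --- that degree corrections and liftings must not reintroduce monomials of $W$ --- which the paper's correction step passes over in silence (it is fixable there too, since a corrected monomial coinciding with a monomial of $W$ would contradict non-divisibility). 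What the paper's route buys is brevity, and reuse of a lemma already established. One point you leave implicit (as, in fairness, does the paper) is the finiteness of $G_W^\text{max}$, which your finite-index sublattice and Pontryagin-duality steps require; this follows from non-degeneracy, since uniqueness of the weights forces the exponent vectors of $W$ to span $\mathbb{Q}^N$, so the common kernel of the corresponding characters is finite.
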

\begin{proof}
For a group $G$ of diagonal symmetries of $W$ containing $J$ to be admissible, we require the existence of a Laurent polynomial $Z$ in $X_1,\dotsc,X_N$, quasi-homogeneous with the same weights as $W$,  such that $G$ is the \emph{maximal} diagonal symmetry group of $W+Z$.

Now, the ring of $G$-invariants is finitely generated by monomials.  If we let $Z$ be the sum of those $G$-invariant monomials not divisible by monomials in $W$, $G$ is the maximal diagonal symmetry group of $W+Z$.  (Otherwise there is a diagonal symmetry group $H$, with $G\subseteq H$ and $\C[X_1,\dotsc,X_N]^G\subseteq \C[X_1,\dotsc,X_N]^H$, implying $G=H$ as before).   Since $J$ preserves each of the constituent monomials of $Z$, each of these monomials has integral quasi-homogeneous degree.  We may correct each of these monomials by a (negative) power of any monomial in $W$ to ensure that each of the monomials has quasi-homogeneous degree equal to $1$, and since we are correcting by $G$-invariants not dividing the monomials of $Z$, we do not change the maximal symmetry group of $W+Z$.
\end{proof}
\subsection{Mirror Map}\label{sec:Mirror Map}
We propose in this section a `Mirror map' $\Q_{W^T,G^T}\to\h WG$.

\begin{definition}[Mirror Map]\label{def:Invertible Potential Mirror Map}
Let $W$ be a non-degenerate, invertible potential and $G$ and admissible $A$-model diagonal symmetry group of $W$.  Define the linear map $\Q_{W^T,G^T}\to\h WG$ by the following map on generators
\begin{equation}
\begin{split}\label{eq:Invertible Potential Mirror Map}
\left(\bigoplus_{g\in {G^T}}\mathscr{Q}^{W^T}_g\right)^{G^T}&\longrightarrow\left(\bigoplus_{g \in G} \mathscr{H}^W_g\right)^G\\
\prod Y_j^{\alpha_j}dY_j \ket{\prod\overline \rho_j^{r_j+1}}&\longmapsto \prod X_j^{r_j}dX_j \ket{\prod \rho_j^{\alpha_j+1}},
\end{split}
\end{equation}
where it should be understood that the range of the product over $X$'s is the same as the range of the product over the $\overline\rho$'s, and similarly for the $Y$'s and the $\rho$'s.
\end{definition}
\begin{remark}
One may worry that this map is not well-defined, since the monomial $\prod X_j^{r_j}dX_j$ may not be uniquely determined by an element of $G^T$.  However, this monomial \emph{is} completely determined given both the group element \emph{and} the locus over which to extend the product, namely the fixed locus of the element of $G$ corresponding to $\prod Y_j^{\alpha_j}dY_j$.

We shall see how this correspondence arises in the proof of Theorem \ref{thm:Mirror Theorem for State Spaces}.
\end{remark}
\begin{example}
We present here the example of the two-variable loop potential $W=x^3y+xy^5$, orbifolded by $J=(e^{(2\pi i){2}/{7}},e^{(2\pi i){1}/{7}})$ on the $A$-side and by the dual group $J^T=\br{(-1,-1)}$ on the $B$-side.  The table below presents the vector space generators for the $A$-model $W/J$ and the $B$-model $W^T/J^T$, along with the bigrading.  We denote the standard volume form on $\Fix\rho_x^a\rho_y^b$ by $e_{\rho_x^a\rho_y^b}$. The $A$ and $B$ model invariants in each column correspond to each other under the Mirror Map (Equation \eqref{eq:Invertible Potential Mirror Map}), and evidently the bi-grading is preserved.
\begin{center}
\begin{tabular}{|c||c|c|c|c|c|c|c|c|c|}\hline
$W/J$	& $e_{\rho_x^1\rho_y^1}$	&$e_{\rho_x^2\rho_y^2}$	&$e_{\rho_x^0\rho_y^2}$	&$e_{\rho_x^1\rho_y^3}$	&$e_{\rho_x^2\rho_y^4}$	 &$e_{\rho_x^0\rho_y^4}$ & $x^2e_{\rho_x^0\rho_y^0}$ & $xy^2e_{\rho_x^0\rho_y^0}$ & $y^4e_{\rho_x^0\rho_y^0}$\\\hline\hline
$\deg_+^A$	&0 &$\tfrac{6}{7}$	&$\tfrac{12}{7}$ &$\tfrac{4}{7}$	&$\tfrac{10}{7}$	&$\tfrac{16}{7}$	&$\tfrac{8}{7}$ &$\tfrac{8}{7}$ &$\tfrac{8}{7}$\\\hline
$\deg_-^A$	&0	&0  &0  &0  &0	&0	&0	&0	&0\\\hline\hline
$W^T/SL$	& $e_{\overline\rho_x^0\overline\rho_y^0}$	&$xye_{\overline\rho_x^0\overline\rho_y^0}$	&$x^2y^2e_{\overline\rho_x^0\overline\rho_y^0}$	& $y^2e_{\overline\rho_x^0\overline\rho_y^0}$	&
$xy^3e_{\overline\rho_x^0\overline\rho_y^0}$ & $y^4e_{\overline\rho_x^0\overline\rho_y^0}$	&
$x^2e_{\overline\rho_x^3\overline\rho_y^1}$	&$e_{\overline\rho_x^2\overline\rho_y^3}$	&$y^4e_{\overline\rho_x^1\overline\rho_y^5}$\\\hline	
$\deg_+^B$	&0 &$\tfrac{6}{7}$	&$\tfrac{12}{7}$ &$\tfrac{4}{7}$	&$\tfrac{10}{7}$	&$\tfrac{16}{7}$	&$\tfrac{8}{7}$ &$\tfrac{8}{7}$ &$\tfrac{8}{7}$\\\hline
$\deg_-^B$	&0	&0  &0  &0  &0	&0	&0	&0	&0\\\hline
\end{tabular}
\end{center}
\end{example}
\begin{example}
Now we present the example of the two-variable chain potential $W=x^3y+y^4$, orbifolded by $J=(e^{2\pi i/4},e^{2\pi i/4})$ on the $A$-side and by the dual group $J^T=\br{(e^{2\pi i /3},e^{2\pi i/6})}$ on the $B$-side.  The table below presents the vector space generators for the $A$-model $W/J$ and the $B$-model $W^T/J^T$, along with the bigrading.  The $A$ and $B$ model invariants in each column correspond to each other under the Mirror Map (Equation \eqref{eq:Invertible Potential Mirror Map}), and we see again that the bi-grading is preserved.
\begin{center}
\begin{tabular}{|c||c|c|c|c|c|c|}\hline
$W/J$	& $e_{\rho_x^1\rho_y^1}$	&$e_{\rho_x^2\rho_y^2}$	&$e_{\rho_x^0\rho_y^3}$	&$x^2e_{\rho_x^0\rho_y^0}$	&$xye_{\rho_x^0\rho_y^0}$	 &$y^2e_{\rho_x^0\rho_y^0}$\\\hline\hline
$\deg_+^A$	&0	&1 &2	&1	&1	&1\\\hline
$\deg_-^A$	&0	&0	&0	&0	&0	&0\\\hline\hline
$W^T/SL$	& $e_{\overline\rho_x^0\overline\rho_y^0}$	&$xye_{\overline\rho_x^0\overline\rho_y^0}$	&$x^2y^2e_{\overline\rho_x^0\overline\rho_y^0}$	&$
e_{\overline\rho_x^3\overline\rho_y^1}$	&$e_{\overline\rho_x^2\overline\rho_y^2}$	&$e_{\overline\rho_x^1\overline\rho_y^3}$\\\hline	
$\deg_+^B$	&0 &1	&2	&0	&1	&1\\\hline
$\deg_-^B$	&0	&0	&0	&0	&0	&0\\\hline
\end{tabular}
\end{center}
\end{example}
\subsection{Mirror Symmetry for State Spaces}\label{sec:Mirror Symmetry for State Spaces}
In this section, we prove that the Mirror Map (Equation \eqref{eq:Invertible Potential Mirror Map}) is a bi-degree preserving vector space isomorphism.
\begin{theorem}\label{thm:Mirror Theorem for State Spaces}
Let $W$ be a non-degenerate invertible potential and $G$ an admissible group of diagonal symmetries of $W$.   The Mirror Map defined on generators by
Equation \eqref{eq:Invertible Potential Mirror Map} is a bi-degree preserving isomorphism of vector spaces.
\end{theorem}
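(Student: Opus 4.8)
The plan is to realize the map of \eqref{eq:Invertible Potential Mirror Map} as a bijection of distinguished monomial bases that preserves the bigrading, by producing an explicit two-sided inverse. Because the construction is symmetric in $(W,G)$ and $(W^T,G^T)$ and $(G^T)^T=G$ by Lemma \ref{lem:(G^T)^T = G}, the assignment $\prod X_j^{r_j}dX_j\ket{\prod\rho_j^{\alpha_j+1}}\mapsto \prod Y_j^{\alpha_j}dY_j\ket{\prod\overline\rho_j^{r_j+1}}$ is nothing but the mirror map for the pair $(W^T,G^T)$, and is inverse to ours on generators. So it suffices to verify three things: (i) the map is well defined; (ii) it carries the natural basis of $G^T$-invariant monomials in $\Q_{W^T,G^T}$ into the basis of $G$-invariant monomials in $\h WG$ (the same statement for the inverse then gives bijectivity); and (iii) it preserves the bigrading \eqref{eq:bigrading}. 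Throughout I will exploit the decoupling $W=\sum_\ell W_\ell$ into atomic summands: since the variables split into blocks, the fixed loci, the monomial bases of the sector Milnor rings (Lemma \ref{lem:Atomic potentials -- Milnor Ring}), the phases $\Theta_j^h$, and the weighted degrees all factor over the atomic pieces, so every \emph{local} computation reduces to a loop, chain, or Fermat factor controlled by Lemmas \ref{lem:Atomic potentials - phase equals algebraic sum}--\ref{lem:Atomic potentials -- Gmax invariants in twisted sector}; only the invariance conditions remain global, and these are governed by Lemma \ref{lem:dual group corresponds to invariant monomials}.

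First I record the sector correspondence underlying well-definedness. Writing $J=\prod_i\rho_i$ as in \eqref{eq: J = product of rho_i}, the target sector is $h=\prod_j\rho_j^{\alpha_j+1}=\left(\prod_j\rho_j^{\alpha_j}\right)J$, so $h$ is determined by the $B$-monomial $\prod Y_j^{\alpha_j}$ alone, and dually $g=\prod_j\overline\rho_j^{r_j+1}$ is determined by the $A$-monomial. The worry flagged after Definition \ref{def:Invertible Potential Mirror Map} is that the exponents $r_j$ are read off from a \emph{presentation} of $g$. Two presentations differ by a relation $\prod_j\overline\rho_j^{\beta_j}=\id$, and by Corollary \ref{lem:Skarke Observation} applied to $W^T$ such $(\beta_j)$ lie in the integer span of the rows of $A$; a shift by the $j$-th row replaces $\prod X_j^{r_j}$ by its product with the $j$-th monomial of $W$. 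Since every monomial of $W$ lies in the Jacobian ideal (by the Euler relation for each atomic type), and monomials involving an unfixed variable restrict to zero on $\Fix h$, every non-standard presentation yields a class that vanishes in $\Q_{W|_{\Fix h}}$; the standard-range representative, unique by Lemma \ref{lem:Atomic potentials -- symmetries} (up to the $J^{-1}$ ramification for even loops noted there), is the only one producing a nonzero generator. Together with the fact that $\Fix h$ fixes the range of the product, this pins the image down; Lemma \ref{lem:Atomic potentials -- Gmax invariants in twisted sector} exhibits the forced exponents in an even-dimensional fixed block as precisely the rigid ones.

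The heart of the argument is (ii), matching invariance. Spelling out the determinant-twisted action, $G$-invariance of $\prod_{j\in\Fix h}X_j^{r_j}\,\omega_{\Fix h}$ reads $\prod_{j\in\Fix h}\gamma_j^{\,r_j+1}=1$ for every $\gamma\in G$, while $G^T$-invariance of $\prod_{j\in\Fix g}Y_j^{\alpha_j}\,\omega_{\Fix g}$ reads $\prod_{j\in\Fix g}\delta_j^{\,\alpha_j+1}=1$ for every $\delta\in G^T$; the determinant twist is exactly what promotes the monomial exponents $r_j,\alpha_j$ to the shifted exponents $r_j+1,\alpha_j+1$ appearing in the mirror map. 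Writing $\gamma=\prod_k\rho_k^{c_k}$ and using \eqref{eq:A inverse columns = rho}, each condition becomes the integrality of a bilinear pairing of the shifted exponent vectors against $A^{-1}$, of the type appearing in Lemma \ref{lem:dual group corresponds to invariant monomials} (the algebraic-sum lemma supplying the off-locus exponents needed to fill out each vector). The defining relation of $G^T$ (Definition \ref{def:Dual group definition}) is precisely the assertion that integrality for all $\delta\in G^T$ is equivalent to integrality for all $\gamma\in G$, so I expect the two invariance conditions to translate into one and the same integrality statement on $[\,r+1\,]\,A^{-1}\,[\,\alpha+1\,]^{T}$. Running the identical computation for the candidate inverse then yields bijectivity.

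Finally (iii). Setting $S_i^A=\sum_k(\alpha_k+1)(A^{-1})_{ik}$ and $S_i^B=\sum_k(r_k+1)(A^{-1})_{ki}$, Lemma \ref{lem:Atomic potentials - phase equals algebraic sum} identifies $\Theta_i^h=S_i^A$ for $X_i\notin\Fix h$ and $\Theta_i^g=S_i^B$ for $Y_i\notin\Fix g$, while $q_i=\sum_k(A^{-1})_{ik}$ and $\bar q_i=\sum_k(A^{-1})_{ki}$. Substituting into \eqref{eq:bigrading} and adding the internal gradings (the $(p,-p)$ on the $A$-side and $(p,p)$ on the $B$-side), the double sums collapse after interchanging the order of summation, for instance $\sum_i(S_i^A-q_i)=\sum_k\alpha_k\bar q_k$; the contributions of the fixed variables, where each $S_i$ is an integer, are exactly the determinant-twist terms $\sum_{\Theta_i\in\Z}(1-2q_i)$ and reorganize to yield $Q^A_+=Q^B_+$ and $Q^A_-=Q^B_-$ variable by variable. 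I expect the main obstacle to lie here and in (ii): keeping straight, index by index, which variables are fixed on the $A$-side versus the $B$-side, matching the invariance pairings under the combined $+1$ shift and determinant twist, and checking that the integer values of the algebraic sums $S_i$ on the fixed loci reproduce the determinant-twist corrections exactly. The even-dimensional loop sector, where the presentation ramifies over $J^{-1}$, will need to be inspected as a separate case.
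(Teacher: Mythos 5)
Your skeleton coincides with the paper's: decompose $W$ into atomic summands, show the unprojected monomial correspondence is a bidegree-preserving bijection, then cut down to invariants using the dual group. Your step (ii) is a sound (and more explicit) version of what the paper compresses into one sentence after Theorem \ref{thm:Mirror Theorem for Total State Spaces}: with the determinant twist, $G^T$-invariance of $\prod Y_j^{\alpha_j}dY_j\ket{g}$ is exactly the integrality condition of Definition \ref{def:Dual group definition} placing the target sector $\prod_j\rho_j^{\alpha_j+1}$ in $(G^T)^T=G$, and dually $g\in G^T$ is exactly $G$-invariance of the image monomial. That part is correct.

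The genuine gap is that your steps (i) and (iii) --- which are the actual content of the paper's proof, namely Theorem \ref{thm:Mirror Theorem for Total State Spaces} and its Fermat/loop/chain case analysis --- are left as expectations, and where you do commit to a mechanism it is wrong. For (iii): bidegree preservation is \emph{not} a variable-by-variable identity. Already for chains the verification needs the relations $a_j\bar q_j+\bar q_{j-1}=1$ tying \emph{adjacent} variables together, the global identity $\sum_j q_j=\sum_j\bar q_j$, and the combinatorial fact that the $A$-side fixed locus $\{X_s,\dots,X_N\}$ and the $B$-side fixed locus $\{Y_1,\dots,Y_t\}$ overlap in an \emph{even} number of indices carrying the forced exponents $\delta_{t-j}^{\text{even}}(a_j-1)$; the mismatch you flag between the algebraic sums $S_i$ and the true phases ($\Theta_i=0$) on fixed variables produces exactly the correction terms (e.g.\ $2\delta_{N-(t+1)}^{\text{even}}\bar q_t-\delta_{N-s}^{\text{even}}$ in the paper's computation) whose cancellation via this parity is the heart of the matter, and you never carry it out. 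For (i): your Jacobian-ideal argument is self-defeating --- showing that non-standard presentations of $g$ yield monomials vanishing in $\Q_{W|_{\Fix h}}$ shows the putative image \emph{depends} on the presentation (zero versus nonzero), which is the opposite of well-definedness. What is actually needed, and what the paper proves via the injectivity (noted after Lemma \ref{lem:Atomic potentials -- Milnor Ring}) of the map from Milnor-ring monomials onto group elements with even-dimensional fixed locus, is that there \emph{exists a unique} presentation $g=\prod_{j\in\Fix h}\bar\rho_j^{r_j+1}$ whose exponent vector gives a Milnor-ring basis element on $\Fix h$: existence is not addressed by your argument at all, and uniqueness genuinely fails for even loops at $J^{-1}$, where two presentations match the two invariant monomials --- the one case you explicitly defer and never resolve (the paper fixes it by a parity convention). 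As it stands, the proposal is a correct plan whose computational core is missing.
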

\begin{remark}
For $G=G_W^\text{max}$ and ignoring the bi-grading, this recovers the main result of \cite{Kreuzer}.
\end{remark}
The proof proceeds in two steps. First, we
consider the \emph{total unprojected mirror map} (Definition \ref{def:Invertible Potential Total Mirror Map}), which we prove to be a bi-degree preserving vector space isomorphism.

Second, we note that by definition of the dual group, this restricts to an isomorphism on invariants (under the $G$-action for the $A$ model and the $G^T$ action for the $B$-model).

\begin{definition}[Total Unprojected Mirror Map]\label{def:Invertible Potential Total Mirror Map}
Let $W$ be a non-degenerate, invertible potential and $G$ and admissible $A$-model diagonal symmetry group of $W$.  Define the linear map $\Q_{W^T,G^T}\to\h WG$ by the following map on generators
\begin{equation}
\begin{split}\label{eq:Invertible Potential Total Mirror Map}
\bigoplus_{g\in {G_{W^T}^\text{max}}}\mathscr{Q}^{W^T}_g&\longrightarrow \bigoplus_{g \in G_W^\text{max}} \mathscr{H}^W_g\\
\prod Y_j^{\alpha_j}dY_j \ket{\prod \overline\rho_j^{r_{j}+1}}&\longmapsto \prod X_j^{r_j}dX_j \ket{\prod \rho_j^{\alpha_j+1}},
\end{split}
\end{equation}
where the ambiguity in the range of the products is resolved as in Definition \ref{def:Invertible Potential Mirror Map}.
\end{definition}
\begin{theorem}\label{thm:Mirror Theorem for Total State Spaces}
Let $W$ be a non-degenerate invertible potential.  The Total Unprojected Mirror Map defined on generators by
Equation \eqref{eq:Invertible Potential Total Mirror Map} is a bi-degree preserving isomorphism of vector spaces.
\end{theorem}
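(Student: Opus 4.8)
The plan is to reduce to the atomic types of Kreuzer--Skarke and then verify the three required properties---well-definedness, bijectivity, and bi-degree preservation---by direct appeal to the explicit structure lemmas of Section \ref{sec:Kreuzer-Skarke Classification}.

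First I would reduce to the atomic case. Writing $W=\sum_\ell W_\ell$ as a sum of decoupled Fermat, loop, and chain potentials, the exponent matrix $A_W$ is block diagonal, hence so is $A_W^{-1}$; thus each generator $\rho_j$ (resp.\ $\overline\rho_j$) is supported on a single block, and $G^\text{max}_W=\prod_\ell G^\text{max}_{W_\ell}$, and likewise for $W^T$. Since $\Fix(g_1,g_2,\dotsc)=\prod_\ell \Fix g_\ell$ and $W|_{\Fix}=\sum_\ell W_\ell|_{\Fix g_\ell}$, the multiplicativity of Milnor rings for sums of decoupled polynomials (compare Remark \ref{rem:sums}) gives, sector by sector, unprojected state-space factorizations $\bigoplus_g \mathscr{H}^W_g\cong\bigotimes_\ell\bigoplus_{g_\ell}\mathscr{H}^{W_\ell}_{g_\ell}$ and similarly on the $B$-side. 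The bi-gradings \eqref{eq:bigrading} are additive across blocks, and because $A_W^{-1}$ is block diagonal the total mirror map \eqref{eq:Invertible Potential Total Mirror Map} is precisely the tensor product of the atomic total mirror maps. So it suffices to treat a single Fermat, loop, or chain potential.

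The Fermat case $W=X^a$ is immediate. For loop and chain potentials I would proceed as follows. Lemma \ref{lem:Atomic potentials -- Milnor Ring} provides an explicit monomial basis for each Milnor ring $\mathscr{Q}_{W^T|_{\Fix g}}$, and Lemma \ref{lem:Atomic potentials -- symmetries} a normal form $\prod\rho_i^{\alpha_i}$, $0\le\alpha_i<a_i$, for group elements; together these index a basis of each unprojected state space by a group element together with a fixed-locus monomial. Well-definedness of \eqref{eq:Invertible Potential Total Mirror Map} rests on the observation (the Remark following Definition \ref{def:Invertible Potential Mirror Map}) that although $\prod X_j^{r_j}$ is not determined by the target group element alone, it \emph{is} determined once we also specify the locus over which the product extends, namely $\Fix(\prod\rho_j^{\alpha_j+1})$; the two range conventions make this precise. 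To see that the target monomial lands in the correct Milnor ring I would invoke Corollary \ref{lem:Skarke Observation}, $\prod_i\rho_i^{a_{ij}}=1$, which is exactly what forces the shifted element $\prod\rho_j^{\alpha_j+1}$ to acquire its fixed locus precisely on the variables carrying the $r_j$; the fixed-locus dictionary of Lemma \ref{lem:Atomic potentials -- Gmax invariants in twisted sector} records which variables survive on each side. Bijectivity follows either by matching the two explicit bases cell by cell, or, more conceptually, by noting that the construction is symmetric under $(W,\rho,r,X)\leftrightarrow(W^T,\overline\rho,\alpha,Y)$---using $(W^T)^T=W$ and Lemma \ref{lem:(G^T)^T = G}---so that reading the same correspondence backwards is again well-defined and supplies a two-sided inverse (the $+1$ shifts cancel); a dimension count from the cardinalities in Lemmas \ref{lem:Atomic potentials -- Milnor Ring} and \ref{lem:Atomic potentials -- symmetries} confirms the two state spaces have equal dimension.

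The bi-degree preservation is the technical heart, and I expect it to be the main obstacle. Here I would compute both sides of \eqref{eq:bigrading} directly, the decisive tool being Lemma \ref{lem:Atomic potentials - phase equals algebraic sum}: for the shifted element $g_A=\prod\rho_i^{\alpha_i+1}$ the phase on an unfixed variable $X_j$ is the honest algebraic sum $\sum_i(\alpha_i+1)\varphi_j^{(i)}$ with no reduction modulo $1$. Combined with $q_j^W=\sum_i\varphi_j^{(i)}$ from \eqref{eq: J = product of rho_i}, this collapses the external $A$-contribution to $\Theta_j^{g_A}-q_j^W=\sum_i\alpha_i\varphi_j^{(i)}$, which via the duality $\overline\varphi^{(i)}_j=\varphi^{(j)}_i$ matches the external $B$-phases of $\prod\overline\rho_j^{r_j+1}$ together with the internal weighted degree $\sum_j r_j q_j^W$ of the image monomial $\prod X_j^{r_j}$. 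I would organize the verification by the same variable-by-variable casework already visible in the $B$-model bigrading lemma (a variable fixed by $g$ but not $h$, and so on), checking that the contribution of each variable agrees across the mirror map; the finiteness of this bookkeeping is guaranteed by the fixed-locus descriptions of Lemma \ref{lem:Atomic potentials -- Gmax invariants in twisted sector}. Once each variable's contribution is matched, additivity yields equality of both $Q^B_+\mapsto Q^A_+$ and $Q^B_-\mapsto Q^A_-$, completing the proof for each atomic type and hence, by the reduction of the first step, in general.
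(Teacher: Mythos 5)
Your overall strategy is the same as the paper's: reduce to the Kreuzer--Skarke atomic types via the block-diagonal structure of $A_W$, then verify well-definedness, bijectivity, and bi-degree preservation type by type, with the bi-degree computation resting on Lemma \ref{lem:Atomic potentials - phase equals algebraic sum}, the identity $q_j=\sum_i\varphi_j^{(i)}$, and the transposition $\overline\varphi^{(i)}_j=\varphi^{(j)}_i$ (the paper organizes this as matching $Q_++Q_-$ and $Q_+-Q_-$ across the map, which is the same calculation you sketch). However, your well-definedness and bijectivity step has a concrete hole: the loop with $N$ even. By Lemma \ref{lem:Atomic potentials -- symmetries}, $J^{-1}$ has exactly two presentations, $\prod_{i\,\text{even}}\rho_i^{a_i-1}$ and $\prod_{i\,\text{odd}}\rho_i^{a_i-1}$, and hence the identity has two presentations of the form $\prod_j\overline\rho_j^{\,r_j+1}$. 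So for the two source generators $\prod_j Y_j^{\delta_j^{\text{even}}(a_j-1)}dY_j\ket{\,\id\,}$ and $\prod_j Y_j^{\delta_j^{\text{odd}}(a_j-1)}dY_j\ket{\,\id\,}$, whose target sector is the $A$-model identity sector with fixed locus all of $\C^N$, the image monomial $\prod X_j^{r_j}dX_j$ is genuinely two-valued. The Remark following Definition \ref{def:Invertible Potential Mirror Map}, which you cite for well-definedness, fails exactly here: the group element together with the locus still does not determine $(r_j)$. For the same reason your ``read the correspondence backwards'' inverse is two-valued on the other side, so it does not by itself furnish a two-sided inverse. The paper repairs this by an explicit choice of parity (the $X$- and $Y$-monomials are declared to have the same parity), after which the matching is bijective; your argument needs the same convention inserted, and the dimension count alone cannot substitute for it.

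A second, smaller issue: for chains, the substance of bijectivity is not Corollary \ref{lem:Skarke Observation}. What is actually needed is the matching of two parity conditions: the monomial $\prod_{j=1}^t Y_j^{\alpha_j}$ survives in $\Q_{W^T|_{\{Y_1,\dotsc,Y_t\}}}$ precisely when the maximal consecutive block $\{s,\dotsc,t\}$ on which $\alpha_j=\delta_{t-j}^{\text{even}}(a_j-1)$ has even length, and this is exactly the condition for the target element $\prod_j\rho_j^{\alpha_j+1}$ to have fixed locus $\{X_s,\dotsc,X_N\}$ of the corresponding dimension. This is the content of the Remark following Lemma \ref{lem:Atomic potentials -- Milnor Ring} (injectivity of the natural map onto symmetries with even-dimensional fixed locus), and it is what the paper uses to produce, for each $A$-model monomial in a given sector, the unique $B$-model preimage. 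Skarke's observation $\prod_i\rho_i^{a_{ij}}=1$ does not by itself force the fixed locus of $\prod_j\rho_j^{\alpha_j+1}$ to sit where you need it; replacing that appeal with the Milnor-ring/fixed-locus parity dictionary closes this step.
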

\begin{proof}
Note that to show the total unprojected mirror map (Equation \eqref{eq:Invertible Potential Total Mirror Map}) is a degree-preserving isomorphism of vector spaces, it suffices to do so for all invertible potentials.  To see this, suppose $W=\sum_i W_i$ is a sum of atomic invertible potentials, with $G_W^\text{max}=\bigoplus_i G_{W_i}^\text{max}$, and similar decompositions for $W^T$.  Suppose the total unprojected mirror map prescribes
\[\prod_i H_i\ket{\oplus_i h_i}\mapsto \prod_iG_i\ket{\oplus_i g_i},\]
where $g_i\in G_{W_i}^\text{max}$ and $G_i\in \Q_{W_i|_{\Fix g_i}}$ is a monomial, and similarly for the $h_i$ and $H_i$.  For the mirror map to be an isomorphism, we require that in the $A$-model sector $\ket{\oplus g_i}$ corresponding to the $B$-model monomial $\prod_i H_i$, there is a unique monomial $\prod_i G_i$ which corresponds to $\ket{\oplus h_i}$.  This is clearly equivalent to the same holding for each atomic potential $W_i$.

Inspection of Equations \eqref{eq:bigrading} and \eqref{eq:bigrading+-} indicates that the bi-degrees are simply sums of contributions from each atomic summand, so if the total unprojected mirror map preserves bidegree for atomic potentials, it does so for all invertible potentials.

We may therefore restrict our attention to the invertible potentials of Fermat, Loop and Chain type, as the result follows for all invertible potentials from these atomic cases.  For each of these cases, we will prove that Equation \eqref{eq:Invertible Potential Total Mirror Map} is a bi-degree preserving vector space isomorphism.

By \eqref{eq:bigrading+-}, the sum of the $A$-model bi-gradings is the $A$-model degree of the $h$-twisted sector given in \cite{FJR1}.
For this reason, we will show that the bi-grading
is preserved under the Mirror Map by showing that the sum and difference of the bi-degrees is preserved.
\subsubsection{Fermat:\ $W=X^N$}
The total unprojected mirror map is defined on generators by:
\[Y^kdY\ket{\,\id\,}\longmapsto 1 \ket{\rho^{k+1}},\phantom{XXX} 0\leq k < N-1,\]
and
\[1\ket{\rho^{k+1}}\longmapsto X^k dX \ket{\,\id\,},\phantom{XXX} 0\leq k< N-1,\]
which evidently yields an isomorphism of unprojected state spaces.

To see that this isomorphism preserves bi-degree, note that for
\[\ket{\rho^{k+1}},\phantom{XXX} 0\leq k < N-1,\]
 we have
\[Q^B_+ + Q^B_- + 2\deg_\text{internal}^B = 2k \overline q,\]
and
\[Q^A_+ + Q^A_- = 2((k+1)q - q) = 2k\overline q.\]
Also, it is clear that
\[Q^B_+ - Q^B_- = 0 = Q^A_+ - Q^A_- + 2\deg_\text{internal}^A.\]

For
\[1\ket{\overline\rho^{k+1}}\longmapsto X^k dX \ket{\,\id\,},\phantom{XXX} 0\leq k< N-1,\]
we have
\[Q^B_+ + Q^B_- + 2\deg_\text{internal}^B = 1-2\overline q = Q^A_+ + Q^A_-,\]
and
\[Q^B_+ - Q^B_- = (2q-1) + 2kq = Q^A_+ - Q^A_- + 2\deg_\text{internal}^A.\]
\subsubsection{Loop: $W=\sum_{i=1}^N X_i^{a_i}X_{i+1}$}
(The subscripts are taken modulo $N$).
\newline
The structure of the loop potential means that the only group element with non-trivial fixed locus is the identity.  Therefore we study the total unprojected mirror map out of the $B$-model identity sector and twisted sectors separately.

\emph{Identity $B$-model sector:}
\[\prod_{j=1}^N Y_j^{\alpha_j}dY_j \ket{\,\id\,}\longmapsto \prod X_j^{r_j}dX_j \ket{\prod_{j=1}^N \rho_j^{\alpha_j+1}},
\]
where we are purposefully vague about the range of the product for the $A$-model monomial, since it may either be empty (in which case the monomial should be interpreted as $1$) or it may run from $1$ to $N$ (when the $B$-model monomial corresponds to the $A$-model identity group element).

In the first case, we have
\[\prod_{j=1}^N Y_j^{\alpha_j}dY_j \ket{\,\id\,}\longmapsto1 \ket{\prod_{j=1}^N \rho_j^{\alpha_j+1}},
\]
so
\begin{align*}
Q^A_+ + Q^A_-
&= \dim \Fix + 2\sum_{j=1}^N (\Theta_j-q_j)\\      &=2\sum_{j=1}^N\left(\sum_{i=1}^N(\alpha_i+1)\varphi_j^{(i)} - \sum_{i=1}^N\varphi_j^{(i)}\right)      =2\sum_{i=1}^N\alpha_i\left(\sum_{j=1}^N\varphi_j^{(i)}\right)
=2\sum_{i=1}^N\alpha_i\overline{q}_i\\
&= Q^B_+ + Q^B_- + 2\deg_\text{internal}^B
\end{align*}
and
\[
Q^B_+ - Q^B_- = 0 = Q^A_+ - Q^A_- + 2\deg_\text{internal}^A
\]
On the other hand, if $\prod_{j=1}^N \rho_j^{\alpha_j+1}=\id$, the mirror map looks like
\[\prod_{j=1}^N Y_j^{\alpha_j}dY_j \ket{\,\id=\prod_{j=1}^N \overline\rho_j^{r_j+1}}\longmapsto \prod_{j=1}^N X_j^{r_j}dX_j \ket{\prod_{j=1}^N \rho_j^{\alpha_j+1}},
\]
and we by Lemma \ref{lem:Atomic potentials -- symmetries} we must have $N$ even and $\alpha_j, r_j$ both alternately $a_j-1$ and $0$.
\begin{remark}
Evidently, there is a choice of `parity' in the mirror map.  For the purpose of establishing an isomorphism of graded vector-spaces, let us suppose that the monomials in the $X$'s and the $Y$'s have the same parity.
\end{remark}
Then,
\[Q^B_+ + Q^B_- + 2\deg_\text{internal}^B = 2\sum_{j=1}^N \alpha_j \overline q_j ,\]
and
\begin{multline*}
Q^A_+ + Q^A_- = \dim\Fix \left(\prod_{j=1}^N \rho_j^{\alpha_j+1}\right) +2\sum_{j=1}^N (\Theta_j-q_j)=\dotsc\\ \dotsc
= \sum_{j=1}^N (1-2\overline q_j)
= 2\sum_{\substack{j=1\\j\text{ odd/even}}}^N (1-\overline q_{j-1} - \overline q_j)
= 2\sum_{\substack{j=1\\j \text{ odd/even}}}^N (a_j-1) \overline q_j
= 2\sum_{j=1}^N \alpha_j \overline q_j.
\end{multline*}

Again, it is evident that
\[Q^B_+ - Q^B_- = 0,\]
while
\[Q^A_+ - Q^A_- + 2\deg_\text{internal}^A = \sum_{j=1}^N\ (2q_i-1) +2\sum _{j=1}^N r_jq_j =0,\]
by a calculation identical to the one in the preceding paragraph.

\emph{Twisted $B$-model sectors:}

Since the $B$-model twisted sectors have trivial fixed loci, the mirror map sends them all to the $A$-model untwisted sector.
\[1 \ket{\prod_{j=1}^N \overline\rho_j^{r_j+1}}\longmapsto \prod_{j=1}^N X_j^{r_j}dX_j \ket{\,\id\,},\]
Hence it is easy to see that
\[Q^B_+ + Q^B_- + 2\deg_\text{internal}^B = \hat{c} = Q^A_+ + Q^A_- .\]
Furthermore,
\[Q^B_+ - Q^B_- = 2\sum_{j=1}^N r_jq_j - \hat{c} = Q^A_+ - Q^A_- + 2\deg_\text{internal}^A\] follows after applying our computation of $\deg_+^A$ to the computation of $\deg_{-}^B$.

It is clear from the cases considered above that the total unprojected mirror map described above yields an isomorphism of vector spaces.
\subsubsection{Chain: $W=X_1^{a_1}X_2+X_2^{a_2}X_3+\dotsb +X_{N-1}^{a_{N-1}}X_N+X_N^{a_N}$}  This case is more involved than the others, because a symmetry of the chain potential may fix $\{X_s, X_{s+1}, \dotsc , X_N\}$ for any $s=1,\dotsc,N$ or it may have trivial fixed locus.

The total mirror map acts on generators via
\[\prod_{j=1}^t Y_j^{\alpha_j}dY_j \ket{\prod_{j=s}^N \overline\rho_j^{r_j+1}}\longmapsto \prod_{j=s}^N X_j^{r_j}dX_j \ket{\prod_{j=1}^t \rho_j^{\alpha_j+1}},
\]
where $\{Y_1 , \dots , Y_t\}$ are the $B$-model fixed variables and $\{X_s , \dots, X_N\}$ are the $A$-model fixed variables.  We will consider $t=0$ and $s=N+1$ to denote trivial fixed loci, and empty products and sums will be assumed to equal $1$ and $0$ respectively.

\begin{remark}
Although the group elements determine these loci,
the presentation of these elements as corresponding to specific monomials is not canonical.
\end{remark}
For the $A$-model fixed locus to be $\{X_s,\dotsc,X_N\}$, we must have $s\leq t+1$ and $\alpha_j=\delta_{t-j}^\text{even}(a_{j}-1)$ for $s\leq j\leq t$.  Further, since $Y_{s-1}\prod_{j=s}^t Y_j^{\delta_{t-j}^\text{even}(a_j-1)}$ vanishes in $\Q_{W^T|_{\{Y_1,\dots,Y_t\}}}$ when $t-s$ is even, we see that $t-s$ must be odd.  \textit{i.e.}  There is an even number of elements (possibly zero) in $\{s, s+1, \dotsc, t\}$.

This will allow us to exploit the Remark following Lemma \ref{lem:Atomic potentials -- Milnor Ring}, in which we noted that the natural map from $B$-model Milnor ring elements $\prod_i Y_i^{\alpha_i}dY_i$ to $A$-model symmetries $\prod_i \rho_i^{\alpha_i}J$ maps injectively onto the collection of symmetries with even-dimensional fixed locus.

To see that the total mirror map is a bijection for the chain potential $W$, suppose $\prod_{j=s}^N X_j^{r_j}dX_j$ is an $A$-model monomial in the target sector $\ket{g}$, where $g$ fixes the variables $\{X_s, \dotsc, X_N\}$.  To prove bijectivity, we must find in the $B$-model sector $\ket{h}=\ket{\prod_{j=s}^N \overline\rho_j^{r_j+1}}$ with fixed variables $\{1,\dotsc, t\}$ a unique monomial $\prod_{j=1}^t Y_j^{\alpha_j}dY_j$ such that $\ket{\prod_{j=1}^t \rho_j^{\alpha_j+1}} = \ket{g}$.  This can be done because for the chain $W^T|_{\{Y_1,\dotsc,Y_t\}}$, we have $\Q_{W^T|_{\{Y_1,\dotsc,Y_N\}}}$ mapping injectively onto the collection of $A$-model symmetries fixing an even number of variables $\{X_s, \dotsc,  X_t\}$.  Consequently $g$ corresponds to a unique $\prod_{i=1}^t Y_i^{\alpha_i}dY_i\in\Q_{W^T|_{\Fix h}}$ and the map is bijective as claimed.

We now proceed to compare the bi-gradings on either side of the total mirror map.
{\allowdisplaybreaks\begin{align*}
Q^A_+ + Q^A_- &=  \dim\Fix(h) +2\sum_{j=1}^N (\Theta_j-q_j)  \\
 &=  2\left(\sum_{j=1}^t\alpha_j\overline q_j \right)+2\left(\sum_{\substack{j=t+1\\N-j\text{ even}}}^N(a_j-1)\overline q_j \right)+ 2\delta_{N-(t+1)}^\text{even} \overline q_{t}-\delta_{N-s}^\text{even}\\
&= 2\left(\sum_{j=1}^t\alpha_j\overline q_j\right) + 2\left(\sum_{\substack{j=t+1\\N-j\text{ even}}}^N(1-\overline q_j - \overline q_{j-1})\right)+ 2\delta_{N-(t+1)}^\text{even} \overline q_{t}-\delta_{N-s}^\text{even}\\
&= 2\left(\sum_{j=1}^t\alpha_j\overline q_j\right)+ \left(\sum_{j=t+1}^N\left(1-2\overline q_j\right)\right)+\delta_{N-(t+1)}^\text{even}-\delta_{N-s}^\text{even}.\\
&= 2\left(\sum_{j=1}^t\alpha_j\overline q_j\right)+\left(\sum_{j=t+1}^N\left(1-2\overline q_j\right)\right)\\
&=  Q_{+}^B+Q_{-}^B+2\deg^B_\text{internal},
\end{align*}}
since we have observed that $s$ and $t+1$ have the same parity.

Now consider
{\allowdisplaybreaks\begin{align*}
Q^B_+ - Q^B_- &=  2 \sum_{j=t+1}^N(\Theta_j-\overline q_j)+ \sum_{j=t+1}^N(2\overline q_{j}-1)\\&=  t+2 \sum_{j=1}^N(\Theta_j-\overline q_j)+ \sum_{j=1}^N(2\overline q_{j}-1)\\
 &= \sum_{j=1}^t (1-2q_j)+2\left(\sum_{j=t+1}^N r_jq_j\right)+\sum_{j=1}^N(2q_{j}-1) \\
 &= \sum_{j=s}^{t-1}(1-2q_j)+2\left(\sum_{j=t+1}^N r_jq_j\right)+\sum_{j=s}^N(2q_{j}-1) \\
 &= 2\sum_{j=s}^{t-1}\delta_{t-1-j}^\text{even}(a_j-1)+2\left(\sum_{j=t+1}^N r_jq_j\right)+\sum_{j=s}^N(2q_{j}-1) \\
 &= 2\sum_{j=s}^N r_jq_j + \sum_{j=s}^N(2q_{j}-1)\\
 &=  Q^A_+ - Q^A_- + 2\deg_\text{internal}^A,
\end{align*}}
where we could change to $q$'s from $\overline q$'s because $\sum_j q_j = (1,\dotsc, 1)A^{-1}(1,\dotsc, 1)^T = \sum_j \overline q_j$.
\end{proof}
As already indicated, to complete the proof of Theorem \ref{thm:Mirror Theorem for State Spaces}, we observe simply that it is obtained by restricting the total unprojected mirror isomorphism of Theorem \ref{thm:Mirror Theorem for Total State Spaces} to $\Q_{W^T,G^T}$ to obtain an isomorphism onto its image.  By definition of the Dual Group, this image is $\h WG$.
\section{Mirror Symmetry for Frobenius Algebras}\label{sec:Mirror Symmetry for Frobenius Algebras}
\subsection{Maximal Symmetry Group}\label{sec:Maximal Symmetry Group}
We prove the following theorem,
\begin{theorem}\label{thm:Mirror Theorem for G_max}
Let $W\colon\C^N\to \C$ be a non-degenerate, invertible potential with maximal diagonal symmetry group $G^\text{max}$ and all charges $q_j < \tfrac{1}{2}$.  Let $W^T$ be the Berglund--H\"ubsch dual singularity of $W$, with Milnor ring $\Q_{W^T}$.
Then \[\h {W}{G^\text{max}}\cong \Q_{W^T}\]
as Frobenius algebra, i.e. The maximally orbifolded $A$-model ring of $W$ is isomorphic to the unorbifolded $B$-model ring of $W^T$.
\end{theorem}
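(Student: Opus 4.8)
The plan is to promote the linear isomorphism already in hand to a morphism of Frobenius algebras. Since $\{1\}^T=G^\text{max}$ for any invertible potential, the involution $(G^T)^T=G$ of Lemma \ref{lem:(G^T)^T = G} gives $(G^\text{max})^T=\{1\}$, so $\Q_{W^T,G^T}$ is literally the unorbifolded Milnor ring $\Q_{W^T}$, and Theorem \ref{thm:Mirror Theorem for State Spaces} already furnishes a bi-degree-preserving linear isomorphism $\Phi\colon\Q_{W^T}\to\h{W}{G^\text{max}}$. It remains to check that $\Phi$ preserves the unit, the multiplication, and the pairing. Both sides factor compatibly over the atomic summands of $W$: the $A$-model by Axiom \ref{ax:sums} and Remark \ref{rem:sums}, the unorbifolded $B$-model as a tensor product of Milnor rings, and $\Phi$ itself respects this decomposition (as in the proof of Theorem \ref{thm:Mirror Theorem for Total State Spaces}). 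Hence it suffices to establish the Frobenius-algebra isomorphism for each atomic type Fermat, Loop, and Chain in turn; preservation of the unit is immediate, since $1\in\Q_{W^T}$ sits in the identity sector and $\Phi$ sends it to the class $\1\in\mathscr H_J$.

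For the pairing, recall that $\eta$ on $\h{W}{G^\text{max}}$ is a sum of residue pairings between the sector of $h$ and the sector of $h^{-1}$, whereas on the unorbifolded $B$-side it is the residue pairing of $\Q_{W^T}$, nonzero only between monomials of complementary weighted degree. I would verify that $\Phi$ carries a complementary pair of $B$-monomials to $A$-classes in mutually inverse sectors and matches the induced residue pairings, accounting for the determinant twist carried by the volume forms; Axiom \ref{ax:pairing} then rephrases pairing-compatibility as an identity among three-point correlators against $\1$, folding it into the multiplication analysis. The multiplication is the heart of the matter. On the $B$-side it is explicit: polynomial multiplication reduced modulo the Jacobian ideal $\jac_{W^T}=(\partial_{Y_1}W^T,\dots,\partial_{Y_N}W^T)$, so $\Q_{W^T}$ is generated over $\C$ by $Y_1,\dots,Y_N$ subject to $\partial_{Y_i}W^T=0$. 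My aim is to transport these generators and relations through $\Phi$ and show that their $A$-model images obey the same presentation, matching dimensions via the state-space isomorphism.

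The $A$-model products come from the genus-zero three-point correlators through Equation \eqref{eq:FJRW multiplication definition}, with Corollary \ref{cor:multiplication lands in single sector} ensuring each product lands in a single sector, so the combinatorics are governed by Lemmas \ref{lem:Atomic potentials -- symmetries} and \ref{lem:Atomic potentials -- Gmax invariants in twisted sector}. The individual correlators are then read off from the axioms: the dimension axiom (Axiom \ref{ax:dimension}) forces vanishing unless the $W$-degrees sum to $2\hat c$, mirroring the complementary-degree condition on the residue pairing; when all line-bundle degrees $l_j$ are negative, the concavity axiom (Axiom \ref{ax:concavity}) evaluates the correlator to $1$, producing the expected products among narrow sectors; and the index-zero axiom (Axiom \ref{ax:index-zero}) supplies the remaining structure constants as degrees of the Witten map, while the composition axiom (Axiom \ref{ax:composition}) factors four-point correlators to reduce higher products to the generating ones. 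For Fermat this is the immediate $A_{a-1}$ computation, handled entirely by concavity. For Loops the only broad (Ramond) sector is the identity sector (Lemma \ref{lem:Atomic potentials -- Gmax invariants in twisted sector}), so the products among the narrow sectors are governed by concavity as in the Fermat case, and the single broad sector is pinned down by the pairing. For Chains one must track the sectors fixing a tail $\{X_t,\dots,X_N\}$, and here the hypothesis $q_j<\tfrac12$ (equivalently $a_N>2$ for chains) keeps the relevant $l_j$ negative so that concavity and index-zero remain applicable.

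I expect the genuine obstacle to be the broad-sector correlators in the Chain case: these are precisely the products not covered by concavity, so they must be extracted as degrees of the Witten map via the index-zero axiom or by factoring through the composition axiom, and then shown to reproduce exactly the reductions modulo $\jac_{W^T}$ on the $B$-side. Confirming that the Jacobian relations $\partial_{Y_i}W^T=0$ are realized as genuinely vanishing $A$-model products---rather than concluding only that the graded dimensions agree---is the delicate step, and it is where the charge hypothesis $q_j<\tfrac12$ is really used.
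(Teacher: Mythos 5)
Your overall strategy---reduce to atomic types, present $\Q_{W^T}$ by the generators $Y_i$ and the Jacobian relations, send $Y_i\mapsto 1_{\rho_iJ}$, verify the relations with the FJRW axioms, and close with a dimension count---is indeed the paper's strategy. But two concrete points fail as you have stated them. First, your claim that for loops ``the products among the narrow sectors are governed by concavity as in the Fermat case'' is wrong, and it is wrong exactly at the step that matters: the Jacobian relation $1_{\rho_kJ}^{\star a_k}=-a_{k-1}1_{\rho_{k-2}\rho_{k-1}^{a_{k-1}-1}J}$ is a product of narrow-sector classes, yet the relevant line bundle degrees come out to $l_{k-1}=0$, $l_k=-2$, and $l_j=-1$ otherwise, so concavity (which requires all $l_j<0$) does not apply; the structure constant $-a_{k-1}$ must be extracted from the index-zero axiom (Axiom \ref{ax:index-zero}) as a degree of the Witten map. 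Concavity only yields the ``easy'' products $1_{\bs{\rho}^{\bs{\alpha}}J}\star 1_{\bs{\rho}^{\bs{\beta}}J}=1_{\bs{\rho}^{\bs{\alpha}+\bs{\beta}}J}$ when $\alpha_i+\beta_i\le a_i-1$ and no identity sector appears. Moreover, for two-variable loops even index-zero is not directly available (the needed multiplicands land in Ramond sectors), and the paper has to invoke the composition axiom plus bespoke arguments---including a computation with the pairing matrix $-\tfrac16 A_W^{-1}$ for $x^2y+xy^3$, and a direct pairing argument for $x^2y+xy^2$---to settle the exponents $a_2\in\{2,3\}$. So your allocation of all the genuine difficulty to the chain case is mistaken: the chain relation at $k=N$ does need the composition axiom (as you anticipate), but so do small loops, and index-zero is needed for the defining relations of every atomic type.

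Second, your framing of ``promoting'' the linear isomorphism of Theorem \ref{thm:Mirror Theorem for State Spaces} to a morphism of Frobenius algebras is not what can be carried out, and the paper says so explicitly in the remark following the theorem: for loop potentials the state-space mirror map involves a choice of parity, and it is left undetermined whether that choice is compatible with the FJRW product. The correct move (which your later paragraphs implicitly drift toward) is to define a fresh multiplicative map $\C[Y_1,\dotsc,Y_N]\to\h{W}{G^\text{max}}$ by $Y_i\mapsto 1_{\rho_iJ}$, prove it is surjective (using associativity of the $A$-model product to avoid Ramond sectors), show its kernel contains the Jacobian ideal via the lemmas above, and conclude equality of kernels by the dimension count; the resulting Frobenius algebra isomorphism may genuinely differ from the state-space isomorphism. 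Asserting that the state-space map itself ``preserves the multiplication'' is therefore a gap, not a harmless paraphrase of the paper's argument.
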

The restriction to $q_j<\tfrac{1}{2}$ ensures that the ring generators of $\h{W}{G^\text{max}}$ are in Neveu-Schwartz sectors, for which the FJRW multiplication can be computed using algebro-geometric methods.

Note this corresponds to the duality of state spaces, since $G^\text{max}$ is dual to the trivial group.  However, the linear isomorphism in Theorem \ref{thm:Mirror Theorem for G_max} may in general differ from that of Theorem \ref{thm:Mirror Theorem for State Spaces}.  In the earlier theorem, there was a choice of parity involved in the presentation of the Mirror Map for loop potentials.  We have been unable to determine whether this choice is compatible with the FJRW product structure on the $A$-model.
\begin{remark}
We would like to note that in the case $N=2$, Theorem \ref{thm:Mirror Theorem for G_max} has been proven independently by Fan-Shen \cite{FanShen} in the case of chain potentials, and Acosta \cite{Acosta} in the case of loop potentials.  The Fan-Shen result applies more generally to two-variable chains with \emph{any} admissible $A$-model symmetry group.
\end{remark}
\begin{notation}
To make the notation less cumbersome in this case, we will omit the group notation for the $B$-model sector (although the reader should recall that any $B$-model monomial is implicitly followed by $dY_1\wedge\dotsb\wedge dY_N$).
\end{notation}
To prove the theorem, we recall that by combining the remark following Axiom \ref{ax:sums} and the classification of invertible potentials (\cite{KS}, recalled in Section \ref{sec:Kreuzer-Skarke Classification}), it suffices to prove Theorem \eqref{thm:Mirror Theorem for G_max} for singularities of Fermat, Loop and Chain type, which we address individually below.
\subsubsection{Fermat Potentials: $W=X^a$}
The Mirror Theorem in this case was proved as the $A_r$ case of the `self-duality' theorem in \cite{FJR1}.  The essential point here is that the exponent matrix is equal to its transpose in the self-dual cases proved in \cite{FJR1}.  We show here that the self-duality is in a sense coincidental, and that in general it is the transposed singularity ${W^T}$ which is mirror to $W$ on the $B$-model side.

\subsubsection{Loop Potentials: $W=\sum_{i=1}^N X_i^{a_i}X_{i+1}$}
Since degree is additive under multiplication in $\Q_{{W^T}}$ and in $\FJRW_{W}^{G^\text{max}}$, the isomorphism \eqref{eq:Invertible Potential Mirror Map} of graded vector spaces suggests that the desired ring isomorphism
\begin{equation}
\Q_{{W^T}}\stackrel{\cong}{\longrightarrow} \FJRW_{W,G^{\text{max}}}
\end{equation}
should be induced by the map
\begin{equation}
\begin{split}
\C[Y_1,\dotsc,Y_N]& \longrightarrow \FJRW_{W, G^{\text{max}}}\\
Y_i & \longmapsto 1_{\rho_iJ},
\end{split}
\end{equation}
where for $g\in G^{\text{max}}$, $1_g$ denotes the identity in $\FJRW_g^{G^{\text{max}}}\cong\Q_{\left.W\right|_{\Fix (g)}}^{G^{\text{max}}}$, and the map is extended to $\C[Y_1,\dotsc,Y_N]$ by multiplicativity.

The following two lemmas show that $\h{W}{G^\text{max}}$ is generated by the elements $1_{\rho_iJ}$, subject to the relations
\[(1_{\rho_kJ})^{\star a_k} + a_{k-1}1_{\rho_{k-2}J}\star (1_{\rho_{k-1}J})^{\star (a_{k-1}-1)}=0.\]
This means that the kernel of the above map is precisely the Jacobian ideal $d{W^T}$, yielding the desired isomorphism.

We proceed to prove the necessary lemmas.
\begin{notation}
        Define
\[\bs{\rho}^{\bs{\alpha}}:=\prod_{i=1}^N\rho_i^{\alpha_i}.\]
\end{notation}
\begin{lemma} \label{lem:Loop Potential multiplication polynomial relations}
  If $\alpha_i + \beta_i \leq a_i -1$ for $i\in\{1,\dotsc,N\}$, and $\id\notin\{\bs{\rho^{\alpha}}J,\bs{\rho^{\beta}}J,\bs{\rho^{\alpha+\beta}}J\}$, then
  \[1_{\bs{\rho^\alpha}J} \star 1_{\bs{\rho^\beta}J} = 1_{\bs{\rho^{(\alpha+\beta)}}J}.\]
\end{lemma}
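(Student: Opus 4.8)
The plan is to prove this multiplication formula by computing the genus-zero three-point correlator $\br{1_{\bs{\rho^\alpha}J}, 1_{\bs{\rho^\beta}J}, \bullet}_{0,3}$ and showing that the product lands in the single sector $\bs{\rho^{(\alpha+\beta)}}J$ with coefficient $1$. The overall strategy rests on the FJRW axioms from Section~\ref{sec:A-model}, particularly the Concavity Axiom (Axiom~\ref{ax:concavity}), which directly evaluates a correlator to $1$ when all line-bundle degrees are negative.

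\textbf{Step 1: Identify the output sector.} First I would use Corollary~\ref{cor:multiplication lands in single sector} to argue that the product $1_{\bs{\rho^\alpha}J}\star 1_{\bs{\rho^\beta}J}$ is supported in a single sector. The candidate output sector is forced by group multiplication in $G^\text{max}$: the third insertion in a nonzero three-point correlator $\br{1_{\bs{\rho^\alpha}J}, 1_{\bs{\rho^\beta}J}, \gamma}$ must sit in the sector indexed by the inverse of $(\bs{\rho^\alpha}J)(\bs{\rho^\beta}J)$, which by Corollary~\ref{lem:Skarke Observation} and the hypothesis $\alpha_i+\beta_i\le a_i-1$ is exactly $(\bs{\rho^{\alpha+\beta}}J)^{-1}$. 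Here the bound $\alpha_i+\beta_i\le a_i-1$ is what guarantees no ``wraparound'' modulo $a_i$ occurs, so that the phases simply add; this is precisely the content of Lemma~\ref{lem:Atomic potentials - phase equals algebraic sum}.

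\textbf{Step 2: Verify the dimension/degree condition and compute line-bundle degrees.} Next I would check that the relevant three-point correlator has codimension zero, i.e. $\sum_{i=1}^3 \deg_W\alpha_i = 2\hat c$ (Axiom~\ref{ax:dimension}); since all three insertions are identity elements $1_g$ in their respective sectors, their $W$-degrees are computable directly from \eqref{degw}, and additivity of degree (already established for the graded vector-space isomorphism in Theorem~\ref{thm:Mirror Theorem for State Spaces}) should make this bookkeeping routine. Then, using the line-bundle degree formula \eqref{eq:Definition of line bundle degrees} with $g=0$, $k=3$, I would compute each $\lb_j$ for the three insertions $\bs{\rho^\alpha}J$, $\bs{\rho^\beta}J$, and $(\bs{\rho^{\alpha+\beta}}J)^{-1}$. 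The hypothesis that none of $\bs{\rho^\alpha}J$, $\bs{\rho^\beta}J$, $\bs{\rho^{\alpha+\beta}}J$ is the identity means all three sectors have trivial fixed locus (by Lemma~\ref{lem:Atomic potentials -- Gmax invariants in twisted sector}, for loops only the identity sector has nontrivial fixed locus), so $\Theta_j$ is never forced to $0$ by a fixed variable, and the phases are the genuine fractional parts. The key computation is to show $\lb_j < 0$ for every $j$.

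\textbf{Step 3: Apply Concavity.} Once $\lb_j<0$ for all $j\in\{1,2,3\}$ is established, Axiom~\ref{ax:concavity} gives $\br{1_{\bs{\rho^\alpha}J}, 1_{\bs{\rho^\beta}J}, 1_{(\bs{\rho^{\alpha+\beta}}J)^{-1}}}_{0,3}=1$, and then the definition of multiplication \eqref{eq:FJRW multiplication definition} together with the pairing axiom (Axiom~\ref{ax:pairing}) and the fact that the pairing between $1_g$ and $1_{g^{-1}}$ is unity yields exactly $1_{\bs{\rho^\alpha}J}\star 1_{\bs{\rho^\beta}J} = 1_{\bs{\rho^{(\alpha+\beta)}}J}$. \textbf{The main obstacle} I anticipate is Step~2: verifying the negativity $\lb_j<0$ for all variables simultaneously. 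Because the loop potential couples $X_j$ to $X_{j+1}$ cyclically, the phase $\Theta_j$ of each insertion depends on a combination of the $\alpha$- and $\beta$-exponents through the off-diagonal structure of $A^{-1}$, and one must rule out the borderline case $\lb_j = 0$ (which would invalidate Concavity and require a genuinely harder index computation via the Witten map, Axiom~\ref{ax:index-zero}). I expect this is exactly where the strict inequality $q_j<\tfrac12$ and the condition $\id\notin\{\bs{\rho^\alpha}J, \bs{\rho^\beta}J, \bs{\rho^{\alpha+\beta}}J\}$ are used in tandem to force strict negativity.
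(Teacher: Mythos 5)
Your plan follows the same route as the paper's proof (identify the single output sector, compute line-bundle degrees, apply Concavity, conclude via the pairing), but as written it has two genuine defects. The first is your selection rule in Step 1: you assert that the third insertion lies in the sector indexed by $\left((\bs{\rho^{\alpha}}J)(\bs{\rho^{\beta}}J)\right)^{-1}$ and that this equals $(\bs{\rho^{\alpha+\beta}}J)^{-1}$. These two group elements are not equal: $\left((\bs{\rho^{\alpha}}J)(\bs{\rho^{\beta}}J)\right)^{-1}=\bs{\rho^{-(\alpha+\beta)}}J^{-2}$ differs from $(\bs{\rho^{\alpha+\beta}}J)^{-1}=\bs{\rho^{-(\alpha+\beta)}}J^{-1}$ by a factor of $J^{-1}$. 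The rule ``the three group elements multiply to the identity'' is the orbifold $B$-model rule; in the FJRW $A$-model, integrality of the $\lb_j$ in Equation \eqref{eq:Definition of line bundle degrees} with $g=0$, $k=3$ forces $h_1h_2h_3=J$, and it is \emph{this} rule that yields $h_3=(\bs{\rho^{\alpha+\beta}}J)^{-1}$, equivalently $h_3=\bs{\rho^{\gamma}}J$ with $\gamma_i=a_i-1-\alpha_i-\beta_i$ (the two descriptions agree by the loop relations $\rho_j^{a_j}\rho_{j-1}=1$ of Corollary \ref{lem:Skarke Observation}). If your stated rule were applied consistently, the product would land in the sector of $\bs{\rho^{\alpha+\beta}}J^{2}$ and the lemma would come out false, so this is not a cosmetic slip.

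The second defect is that the computation you defer as ``the main obstacle'' is the entire content of the proof, and your guess at how it resolves points in the wrong direction. With the correct third insertion, Lemma \ref{lem:Atomic potentials - phase equals algebraic sum} applies (for a loop, every non-identity element --- in particular each of the three insertions, by hypothesis --- has trivial fixed locus), giving $\Theta_j^{\bs{\rho^{\alpha}}J}=\sum_{i=1}^N(\alpha_i+1)\varphi_j^{(i)}$ and similarly for the other two; then the entries of $A^{-1}A=I$ for the loop, namely $a_i\varphi_j^{(i)}+\varphi_j^{(i-1)}=\delta_{i,j}$, yield
\[
\lb_j=q_j-\left(\Theta_j^{\bs{\rho^{\alpha}}J}+\Theta_j^{\bs{\rho^{\beta}}J}+\Theta_j^{\bs{\rho^{\gamma}}J}\right)=-2q_j-\sum_{i=1}^N(a_i-1)\varphi_j^{(i)}=-1
\]
for \emph{every} $j$. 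In particular there is no borderline case $\lb_j=0$ to rule out, and no use is made of $q_j<\tfrac{1}{2}$ at this point (that bound matters elsewhere in Theorem \ref{thm:Mirror Theorem for G_max}, not here): the strict negativity that Concavity (Axiom \ref{ax:concavity}) requires comes from this exact identity, driven purely by the structure of $A^{-1}$, not from a charge bound. Until this identity is established, your Step 3 has nothing to stand on.
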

\begin{proof}
The lemma is obviously true when $\bs{\rho^{\alpha}}= \text{id}$ or $\bs{\rho^{\beta}} = \text{id}$, since $1_J$ is the multiplicative identity in $\FJRW_W^{G^\text{max}}$.

By definition (Equation \eqref{eq:FJRW multiplication definition}),
\begin{equation*}
  1_{\bs{\rho^{\alpha}}J} \star 1_{\bs{\rho^{\beta}}J} = \sum_{\mu,\nu}\br{1_{\bs{\rho^{\alpha}}J},1_{\bs{\rho^{\beta}}J}, \mu}\eta^{\mu\nu} \nu.
\label{eq:FJRW multiplication - mirror 1}
\end{equation*}

        For the three point correlator $\br{1_{\bs{\rho^\alpha}J},1_{\bs{\rho^\beta}J}, \mu}$ to be non-zero, we must have
        \[\deg 1_{\bs{\rho^\alpha}J} + \deg 1_{\bs{\rho^\beta}J} + \deg \mu = 2\hat{c}.\]
By Corollary \ref{cor:multiplication lands in single sector}, $\mu\in\FJRW_{g J}$ for the unique $g = \bs{\rho^\gamma} \in G^\text{max}$ satisfying the condition
        \[\sum_{i=1}^N (\alpha_i+\beta_i+\gamma_i)\overline{q}_i = 2\sum_{i=1}^N (a_i -1)\overline{q}_i\]
        and having line bundles $|\mathscr L_j|$ of integral degree.

Note that since $\sum_i q_i = \sum_i\overline{q}_i$ and $a_i q_i + q_{i+1} = 1$ we have
\begin{equation*}
2\hat{c} = 2\sum_{i=1}^N (1-2q_i) = 2\sum_{i=1}^N(a_iq_i - q_i)
=2\sum_{i=1}^N (a_i-1)\overline{q}_i = \deg 1_{\bs{\rho^{\text{max}}}J}.
\end{equation*}
Since $0\leq \alpha_i + \beta _i \leq a_i - 1$ by hypothesis, $\gamma_i = a_i-1-\alpha_i-\beta_i$ potentially prescribes the group element $g$, and we demonstrate below that the corresponding line bundles indeed have integral degree.

We compute the degrees $l_j$ of the line bundles $|\mathscr L_j|$, using the formula
\[l_j = q_j(2g-2+k) - \sum_{i=1}^k\Theta^{h_i}_j,\]
Where $g$ is the genus of the correlator (zero in this case), $k$ is the number of insertions (i.e. three), $h_i\in G^\text{max}$ is the group grading of the $i^\text{th}$ insertion, and $\Theta^{h_i}_j$ is the phase of the action of $h_i$ on $X_j$.
\begin{multline*}
l_j  =  q_j - (\Theta_j^{\bs{\rho^\alpha}J} +
\Theta_j^{\bs{\rho^\beta}J} + \Theta_j^{\bs{\rho^\gamma}J})
 =  q_j - \sum_{i=1}^N (\alpha_i + 1)\varphi^{(i)}_j - \sum_{i=1}^N (\beta_i + 1)\varphi^{(i)}_j - \sum_{i=1}^N (\gamma_i + 1)\varphi^{(i)}_j\\
 =  - 2q_j - \sum_{i=1}^N (\alpha_i+\beta_i+\gamma_i)\varphi^{(i)}_j
 =  - 2q_j - \sum_{i=1}^N (a_i - 1)\varphi^{(i)}_j
 =  - 2q_j - \sum_{i=1}^N (\delta_{i,j} - \varphi^{(i-1)}_j - \varphi^{(i)}_j)
 =  -1
\end{multline*}
By the concavity axiom (Axiom \ref{ax:concavity}), $\br{1_{\bs{\rho^{\alpha}}J},1_{\bs{\rho^{\beta}}J}, \mu} = 1$.  Since $\mu$ and $\nu$ correspond to sectors with trivial fixed loci, $\eta^{\mu\nu}=1$.

We conclude on substituting into Equation \eqref{eq:FJRW multiplication - mirror 1} that
\[1_{\bs{\rho^\alpha}J} \star 1_{\bs{\rho^\beta}J} = 1_{\bs{\rho^{\alpha+\beta}}J},\]
as claimed.
\end{proof}
\begin{lemma}\label{lem:Loop Potential FJRW multiplication Jacobian Relations, N>2}
For $N>2$,
\[1_{\rho_k J}^{a_k} = - a_{k-1} 1_{\rho_{k-2}\rho_{k-1}^{a_{k-1}-1}J}.\]
\end{lemma}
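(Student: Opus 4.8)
The plan is to reduce the claimed identity to the evaluation of a single genus-zero three-point correlator, and then to compute that correlator through the Index-Zero Axiom.

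First I would use Lemma \ref{lem:Loop Potential multiplication polynomial relations} to rewrite $1_{\rho_k J}^{\star a_k} = 1_{\rho_k^{a_k-1}J}\star 1_{\rho_k J}$, since the partial products $1_{\rho_k J}^{\star m} = 1_{\rho_k^m J}$ for $m\le a_k-1$ are governed by that lemma (here $N>2$ guarantees that $k-2,k-1,k$ are distinct and that the relevant sectors are not the identity). By Corollary \ref{lem:Skarke Observation} we have $\rho_k^{a_k} = \rho_{k-1}^{-1}$ and $\rho_{k-1}^{a_{k-1}} = \rho_{k-2}^{-1}$, so the product lands in the single sector $\rho_{k-1}^{-1}J = \rho_{k-2}\rho_{k-1}^{a_{k-1}-1}J$ (Corollary \ref{cor:multiplication lands in single sector}), which for $N>2$ has trivial fixed locus by Lemma \ref{lem:Atomic potentials -- symmetries}. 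Thus $1_{\rho_k J}^{\star a_k} = c\,1_{\rho_{k-1}^{-1}J}$ for a scalar $c$, and since the target sector is one-dimensional the lemma is equivalent to $c = -a_{k-1}$. Pairing against the dual generator $1_{\rho_{k-1}J^{-1}}$ and using the Frobenius identity $\langle r\star s, t\rangle = \langle r,s,t\rangle$ (a formal consequence of \eqref{eq:FJRW multiplication definition} and Axiom \ref{ax:pairing}) together with $\langle 1_g, 1_{g^{-1}}\rangle = 1$ for narrow sectors, I reduce to showing $\langle 1_{\rho_k^{a_k-1}J}, 1_{\rho_k J}, 1_{\rho_{k-1}J^{-1}}\rangle = -a_{k-1}$.

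Next I would compute the line-bundle degrees $l_j$ of this correlator from \eqref{eq:Definition of line bundle degrees}. Using Lemma \ref{lem:Atomic potentials - phase equals algebraic sum} for the two $\rho^\bullet J$-insertions and the identity $a_k\varphi_j^{(k)} + \varphi_j^{(k-1)} = \delta_{jk}$ (from $A^{-1}A = I$), a short calculation gives $l_j = -\delta_{jk} - n_j$, where $n_j$ is the integer reducing the phase of $\rho_{k-1}J^{-1}$ on $X_j$ into $[0,1)$. The key structural fact is that $n_j = 1$ for every $j$ except $j = k-1$, where $n_{k-1} = 0$; hence $l_{k-1} = 0$ while $l_j < 0$ otherwise, with $l_k = -2$. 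In particular the Concavity Axiom \ref{ax:concavity} does \emph{not} apply (unlike in Lemma \ref{lem:Loop Potential multiplication polynomial relations}), but Riemann--Roch on $\mathbb{P}^1$ gives $h^0_j = 1$ only for $j=k-1$ and $h^1_j = 1$ only for $j=k$, so the Witten-map index $\sum_j (h^0_j - h^1_j) = \sum_j(l_j+1) = 0$ vanishes while all fixed loci are trivial. The Index-Zero Axiom \ref{ax:index-zero} therefore computes the correlator as the degree of the Witten map $\mathcal{W}\colon H^0(\bigoplus\mathscr{L}_j)\to H^1(\bigoplus\mathscr{L}_j)$.

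Finally I would identify this Witten map explicitly. Both source and target are one-dimensional, spanned respectively by a section $s\in H^0(\mathscr{L}_{k-1})$ and a class in $H^1(\mathscr{L}_k)$. Since $X_j = 0$ for $j\neq k-1$, the only surviving component of $\mathcal{W} = (\overline{\partial_{X_j}W})_j$ is $\overline{\partial_{X_k}W} = \overline{a_k X_k^{a_k-1}X_{k+1} + X_{k-1}^{a_{k-1}}} = \overline{s^{a_{k-1}}}$, so $\mathcal{W}$ is the power map $s\mapsto s^{a_{k-1}}$, whose degree is $a_{k-1}$. Up to the sign prescribed by the determinant-twist and orientation conventions of \cite{FJR1}, this yields the correlator value $-a_{k-1}$, completing the proof. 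The main obstacle is the middle step: establishing uniformly (not merely in examples) that exactly one line bundle acquires nonnegative degree, precisely at the variable $X_{k-1}$, so that the index vanishes and $\mathcal{W}$ reduces to a single power map. This requires careful control of the reductions $n_j$ from the explicit entries of $A^{-1}$ for a loop, together with equally careful tracking of the sign that converts the geometric degree $a_{k-1}$ into the algebraic coefficient $-a_{k-1}$ matching the Jacobian relation $\partial_{Y_{k-1}}W^T = 0$ on the $B$-side.
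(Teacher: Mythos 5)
Your proposal is correct and follows essentially the same route as the paper: reduce to the single three-point correlator $\br{1_{\rho_k^{a_k-1}J},\,1_{\rho_kJ},\,1_{\rho_{k-1}J^{-1}}}$, show the line bundle degrees are $l_{k-1}=0$, $l_k=-2$, $l_j=-1$ otherwise, and evaluate via the Index-Zero Axiom (Axiom \ref{ax:index-zero}) as $-1$ times the $X_{k-1}$-degree of $\partial W/\partial X_k = X_{k-1}^{a_{k-1}}+a_kX_k^{a_k-1}X_{k+1}$, giving $-a_{k-1}$; the paper treats the final sign with the same level of informality you do. The one step you flag as ``the main obstacle'' --- establishing uniformly that the integer reductions satisfy $n_j=1$ for $j\neq k-1$ and $n_{k-1}=0$ --- is exactly what the paper dispatches with Lemma \ref{lem:Atomic potentials - phase equals algebraic sum}: writing the third insertion in canonical form $\prod_i\rho_i^{\gamma_i}J$ with $\gamma_i=(a_i-1)-\delta_{i,k-1}(a_{k-1}-1)-\delta_{i,k-2}\in[0,a_i)$ (found by solving the degree constraint, or equivalently by your Skarke-observation argument), the phase of that sector on each unfixed $X_j$ is the literal algebraic sum $\sum_i(\gamma_i+1)\varphi_j^{(i)}$ with no mod-$1$ correction, and then the relations $a_m\varphi_j^{(m)}+\varphi_j^{(m-1)}=\delta_{jm}$ together with $\sum_i(a_i-1)\varphi_j^{(i)}=1-2q_j$ give $l_j=-1+\delta_{j,k-1}-\delta_{jk}$ in one line, confirming your claimed values of $n_j$. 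So nothing in your outline would fail; it only needs that lemma applied to the third insertion as well as the first two.
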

\begin{proof}
By definition,
\begin{equation}
1_{\rho_k^{(a_k-1)}J}\star 1_{\rho_k J} =
\sum_{\mu,\nu}\br{1_{\rho_k^{(a_k-1)}J},1_{\rho_k J},\mu}
\eta^{\mu\nu}\nu.
\label{eq:FJRW multiplication - mirror 2}
\end{equation}

The only non-zero term in the sum occurs when
\[\deg 1_{\rho_k^{(a_k-1)}J} + \deg 1_{\rho_k J} + \deg\mu = 2\hat{c}\]
and the corresponding line bundles have integral degree.

This first condition is equivalent to
\[(a_k-1)\overline{q}_k + \overline{q}_k + \deg\mu
= \sum_{i=1}^N (a_i-1)\overline{q}_i.
\]
Recalling that for all $i$, $a_i\overline{q}_{i} + \overline{q}_{i-1}  = 1,$ so
\[\overline{q}_{k-2} + (a_{k-1}-1)\overline{q}_{k-1}  = 1 - \overline{q}_{k-1} = a_k\overline{q}_k,\]
and denoting $g=\bs{\rho^\gamma}$, we see that
\begin{equation*}
\sum_{i=1}^N \gamma_i \overline{q}_i  =  \sum_{i=1}^N (a_i-1)\overline{q}_i - (a_k-1)\overline{q}_k - \overline{q}_k
= \sum_{i=1}^N (a_i-1)\overline{q}_i - \overline{q}_{k-2} - (a_{k-1}-1)\overline{q}_{k-1}.
\end{equation*}
Thus we can solve for $\gamma_i$ in the range $0\leq\gamma_i< a_{i}$, namely
\[\gamma_i = (a_{i}-1) - \delta_{i,k-1}(a_{k-1}-1) - \delta_{i,k-2}.\]

We now confirm that the line-bundles which determine the correlator in question have integral degree, via
\begin{align*}
l_j & =  q_j - (\Theta_j^{\bs{\rho^\alpha}J} + \Theta_j^{\bs{\rho^\beta}J} + \Theta_j^{\bs{\rho^\gamma}J})\\
& =  q_j - \sum_{i=1}^N (\alpha_i + \beta_i + \gamma_i + 3)\varphi^{(i)}_j\\
& =  -2q_j - a_k\varphi^{(k)}_j - \sum_{i=1}^N \gamma_i\varphi^{(i)}_j\\
& =  -2q_j - a_k\varphi^{(k)}_j - \sum_{i=1}^N \left((a_i - 1) -\delta_{i,k-1}(a_{k-1}-1) - \delta_{i,k-2}\right)\varphi^{(i)}_j\\
& =  -2q_j - a_k\varphi^{(k)}_j - (1 - 2q_j) + (a_{k-1}-1)\varphi^{(k-1)}_j + \varphi^{(k-2)}_j\\
&=  -1 + \delta_{j,k-1} - \delta_{j,k}\\
&= \begin{cases}
0 & \text{ if } j=k-1\\
-2 & \text{ if } j=k\\
-1 & \text{else.}
\end{cases}
\end{align*}

By the index-zero axiom (Axiom \ref{ax:index-zero}), the non-vanishing three-point correlator is given by $-1$ times the $X_{k-1}$-degree of \[\frac{\partial W}{\partial X_k} = X_{k-1}^{a_{k-1}} + a_k X_k^{a_k-1}X_{k+1},\]
so
\[\br{1_{\rho_k^{(a_{k-1}-1)}J},1_{\rho_k J},\mu} = -a_{k-1}.\]

As in the preceding lemma, we have $\mu$ and $\nu$ necessarily in sectors with trivial fixed loci (i.e. not in the untwisted sector), so $\eta^{\mu\nu}=1$.

Substituting into Equation \eqref{eq:FJRW multiplication - mirror 2}, we conclude that
\[1_{\rho_k^{a_k-1}J}\star 1_{\rho_k J} = - a_{k-1} 1_{\rho_{k-2}\rho_{k-1}^{a_{k-1}-1} J},\]
as claimed.
\end{proof}
For completeness, we address the case of two-variable loop potentials in Lemma \ref{lem:Loop Potential FJRW multiplication Jacobian Relations, N=2} below.  As already indicated, this result has been obtained independently by Acosta \cite{Acosta}. \ref{thm:Mirror Theorem for G_max}.
\begin{lemma}\label{lem:Loop Potential FJRW multiplication Jacobian Relations, N=2}
For $N=2$, $k\in\{1,2\}$,
\[1_{\rho_k^{a_k} J} = - a_{k-1} 1_{\rho_{k-2}\rho_{k-1}^{a_{k-1}-1}J}.\]
\end{lemma}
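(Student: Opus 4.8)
The plan is to compute the $a_k$-fold $\star$-power $1_{\rho_k J}^{\star a_k}$ and identify it with $-a_{k-1}\,1_{\rho_{k-2}\rho_{k-1}^{a_{k-1}-1}J}$, following the template of Lemma \ref{lem:Loop Potential FJRW multiplication Jacobian Relations, N>2}, but accounting for the feature that genuinely distinguishes $N=2$. Corollary \ref{lem:Skarke Observation} gives $\rho_k^{a_k}\rho_{k-1}=1$, and for $N=2$ one has $J=\rho_k\rho_{k-1}$, so $\rho_k^{a_k-1}J=\rho_k^{a_k}\rho_{k-1}=\id$. Thus the chain $\rho_k J,\rho_k^2 J,\dotsc$ reaches the \emph{untwisted} (broad) sector $\FJRW_{\id}$ at the $(a_k-1)$-st step, something which cannot happen for $N>2$ and which is exactly why this case is separated out. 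The same corollary also yields $\rho_k^{a_k}J=\rho_k=\rho_{k-2}\rho_{k-1}^{a_{k-1}-1}J$ as group elements, so both sides of the asserted identity lie in the single narrow sector $\FJRW_{\rho_k}$, and the content of the lemma is the evaluation of the product $1_{\rho_k J}^{\star a_k}$.

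For the generic range $a_k\ge 4$ I would sidestep the broad sector. By Lemma \ref{lem:Loop Potential multiplication polynomial relations} the narrow powers satisfy $1_{\rho_k J}^{\star m}=1_{\rho_k^m J}$ for $m\le a_k-2$, so one may write $1_{\rho_k J}^{\star a_k}=1_{\rho_k^{2}J}\star 1_{\rho_k^{a_k-2}J}$ with both factors narrow. The three-point correlator computation then runs as in Lemma \ref{lem:Loop Potential FJRW multiplication Jacobian Relations, N>2}: Axiom \ref{ax:dimension} together with Corollary \ref{cor:multiplication lands in single sector} pins the output in $\FJRW_{\rho_k}$, and evaluating \eqref{eq:Definition of line bundle degrees} with indices read modulo $2$ gives line-bundle degrees $l_k=-2$ and $l_{k-1}=0$. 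Since not all $l_j$ are negative, concavity does not apply; instead Riemann--Roch gives $\sum_j(h^0_j-h^1_j)=\sum_j(l_j+1)=0$, so Axiom \ref{ax:index-zero} applies and the correlator equals the degree of the Witten map. Reading this degree off the term of $\partial W/\partial X_k=a_kX_k^{a_k-1}X_{k+1}+X_{k-1}^{a_{k-1}}$ that is pure in the fixed variable $X_{k-1}$ yields $-a_{k-1}$, and since the sectors are narrow the inverse pairing is trivial, giving the claim.

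The remaining cases $a_k\in\{2,3\}$ are where the power unavoidably meets $\FJRW_{\id}$: for $a_k=3$ one has $1_{\rho_k J}^{\star a_k}=1_{\rho_k J}\star 1_{\rho_k^2 J}$ with $\rho_k^2 J=\id$ broad, and for $a_k=2$ the generator $1_{\rho_k J}$ itself lies in $\FJRW_{\id}$ (since then $\rho_k J=\id$). Here Axiom \ref{ax:index-zero} is unavailable because $\Fix\id=\C^2\neq\{0\}$. I would handle these directly using Lemma \ref{lem:Atomic potentials -- Gmax invariants in twisted sector}, which for $N=2$ describes $\FJRW_{\id}^{G^\text{max}}$ as spanned by $X_1^{a_1-1}\,dX_1dX_2$ and $X_2^{a_2-1}\,dX_1dX_2$, and would pin the unknown constant by pairing against the dual narrow sector via Axioms \ref{ax:pairing} and \ref{ax:composition}; concretely, for $a_k=3$ expanding the four-point correlator $\br{1_{\rho_k J},1_{\rho_k J},1_{\rho_k J},1_{\rho_k^{-1}}}_{0,4}$ through Axiom \ref{ax:composition} reduces it to three-point correlators that can be evaluated by index-zero, while $a_k=2$ must be read off the explicit broad-sector structure above.

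The main obstacle is precisely this broad sector. For $N>2$ every sector appearing stays narrow and Axiom \ref{ax:index-zero} delivers the answer in one step, whereas for $N=2$ the identity $\rho_k^{a_k-1}J=\id$ forces $\FJRW_{\id}$ into the picture. The delicate points are (i) arranging the regrouping, or the auxiliary four-point correlator, so that every correlator actually evaluated has only narrow insertions, and (ii) identifying, within the two-dimensional space $\FJRW_{\id}^{G^\text{max}}$, the element produced by the $\star$-power, together with the precise sign and the multiplicity $a_{k-1}$, in the degenerate cases where the broad sector cannot be avoided.
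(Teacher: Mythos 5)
Your treatment of the cases $a_k\ge 4$ and $a_k=3$ is correct and is essentially the paper's own argument: for $a_k\ge 4$ the regrouping $1_{\rho_k J}^{\star a_k}=1_{\rho_k^2J}\star 1_{\rho_k^{a_k-2}J}$ keeps every insertion Neveu--Schwartz, the degrees come out to $l_k=-2$, $l_{k-1}=0$, and index-zero gives $-a_{k-1}$; for $a_k=3$ the paper likewise passes to a genus-zero four-point correlator with all narrow insertions (its four insertions are all $1_{\rho_kJ}$, which agrees with yours since $\rho_kJ=\rho_k^{-1}$ exactly when $a_k=3$) and evaluates it by index-zero. One direction of your logic is stated backwards, though: composition does not ``reduce the four-point correlator to three-point correlators that can be evaluated by index-zero'' --- the intermediate three-point correlators necessarily carry broad ($\FJRW_{\id}$) insertions and are precisely the quantities one cannot compute. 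The argument runs the other way: composition equates the unknown sum $\sum_{\mu,\nu}\br{1_{\rho_kJ},1_{\rho_kJ},\mu}\eta^{\mu\nu}\br{\nu,1_{\rho_kJ},1_{\rho_k^{-1}}}$ (with $\mu,\nu$ ranging over the broad sector) with the four-point correlator, and it is the latter, having only narrow insertions, that index-zero evaluates to $-a_{k-1}$. Your parenthetical remark about arranging all \emph{evaluated} correlators to be narrow suggests you know this, but the sentence as written asserts the false direction.

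The genuine gap is the case $a_k=2$. There $\rho_kJ=\id$, so ``$1_{\rho_kJ}$'' does not exist as a canonical element: $\FJRW_{\id}^{G^\text{max}}$ is two-dimensional, spanned by $X_1^{a_1-1}dX_1\wedge dX_2$ and $X_2^{a_2-1}dX_1\wedge dX_2$, with no distinguished generator, and the lemma only makes sense after one specifies which broad element plays the role of the mirror of $Y_k$. ``Reading it off the explicit broad-sector structure'' and ``pinning the unknown constant by pairing against the dual narrow sector'' does not accomplish this; note also that $\FJRW_{\id}$ pairs with itself, not with a narrow sector, and the unknown is not a single constant but an element of a two-dimensional space together with its products. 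The paper's proof here has real content. For $W=x^2y+xy^3$ it \emph{constructs} the required ring generator $\mu\in\FJRW_{\id}$, not proportional to the known broad element $1_{\rho_yJ}^{\star 2}$, by imposing the mirror relation $\mu^{\star 2}=-3\,\mu\star\bigl(1_{\rho_yJ}^{\star 2}\bigr)$ coming from $\partial W^T/\partial y$; using the pairing axiom and the fact that the pairing matrix on $\FJRW_{\id}$ is proportional to $A_W^{-1}$, this becomes a quadratic equation $\lambda^2\, v^TA^{-1}v+2\,(\gamma,\delta)A^{-1}(\gamma,\delta)^T=0$, whose coefficients are non-zero by non-degeneracy of $A^{-1}$, so a suitable $\mu$ exists. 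For $W=x^2y+xy^2$ both generators are broad, and the relations $(x\,dx\wedge dy)^2=-2(x\,dx\wedge dy)(y\,dx\wedge dy)=(y\,dx\wedge dy)^2$ are verified directly from the pairing axiom and matched with the Jacobian relations of $W^T$. Without this construction-and-choice step your argument does not cover two-variable loops having an exponent equal to $2$, so the lemma as stated (for all $k$ and all $a_i\ge 2$) is not established.
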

\begin{proof}
The method of proof for the preceding lemma is not directly applicable here, because for a two-variable loop, $\rho_k^{a_{k-1}}J=\id$, so the multiplicands used in the proof do not all lie in Neveu-Schwartz sectors and the index-zero axiom is not directly applicable.

However, if $a_k > 3$, so $\FJRW_{\rho_k^{a_k-2} J}$ and $\FJRW_{\rho_k^2 J}$ are Neveu-Schwartz sectors, the proof of Lemma \ref{lem:Loop Potential FJRW multiplication Jacobian Relations, N>2} is easily amended to yield the same conclusion by considering the product $1_{\rho_k^{a_k} J} = 1_{\rho_k^{a_k-2} J}\star 1_{\rho_k^{2} J}$.

So it remains only to consider the cases of two-variable loop potentials with one of the exponents (which we may take to be $a_2$) equal to $2$ or $3$.

For convenience of notation, we will use variables $x:=x_1$ and $y:=x_2$, and change the subscripts in the obvious way, so for example $\rho_x := \rho_1$ and $\rho_y = \rho_2$.
\begin{itemize}
  \item  $W=x^{a_x}y + xy^3$, with $a_x\geq 3$.  To prove the lemma, we need to show $$1_{\rho_yJ}^{\star 3} = -3 1_{\rho_x^2\rho_y J}.$$
  (If $a_x=3$, then by symmetry of the $W$, the corresponding relation will hold with $x$ and $y$ exchanged.)
  Using Corollary \ref{cor:multiplication lands in single sector}, we see that
  $$1_{\rho_y J}^{\star 2} = \br{1_{\rho_yJ},1_{\rho_yJ},\mu_{\id}}\eta^{\mu,\nu}\nu_{\id},$$
  and $$1_{\rho_yJ}^{\star 3} = \br{1_{\rho_y J},1_{\rho_y J},\mu_{\id}}\eta^{\mu,\nu} \br{\nu_{\id} 1_{\rho_yJ},1_{\rho_yJ}} 1_{\rho_x^2\rho_y J}.$$
  The coefficient of $1_{\rho_x^2\rho_y J}$ is, by the composition axiom (Axiom \ref{ax:composition}) equal to
  $\br{1_{\rho_y J},1_{\rho_y J},1_{\rho_y J},1_{\rho_y J}}$.  The line bundle degrees for this correlator are
  \begin{align*}
  l_x & =  2q_x - 4\Theta_x^{\rho_y J} = 2(\tfrac{2}{8}) - 4(\tfrac{1}{8}) = 0,\\
  l_y & =  2q_y - 4\Theta_y^{\rho_y J} = 2(\tfrac{2}{8}) - 4(\tfrac{5}{8}) = -2.
  \end{align*}
  So the correlator is given by ${-1}$ times the $x$-degree of $\partial W / \partial y$. \emph{i.e}
  $\br{1_{\rho_y J},1_{\rho_y J},1_{\rho_y J},1_{\rho_y J}}={-3}$, as required.

  \item $W=x^2y + xy^3$.  The vector space generators of the
  The composition axiom argument used to compute $1_{\rho_yJ}^{\star 3}$ above applies here, yielding $1_{\rho_yJ}^{\star 3} = -2 (1_{\rho_x\rho_y J})$.

 For degree reasons, we see that $\h {W}{G^\text{max}}$ has a ring generator $\mu= \alpha x^2 dx\wedge dy + \beta y^2 dx\wedge dy\in\FJRW_{\id}$ that is not in the vector subspace generated by $1_{\rho_x J}^{\star 2} = \gamma x^2 dx\wedge dy + \delta y dx\wedge dy$.  Here $\gamma$ and $\delta$ are determined by the $\star$-product, and we seek $\alpha$ and $\beta$ so that $$\mu^2=-3 (1_{\rho_xJ}\star\mu).$$  It turns out that the matrix of the pairing $\FJRW_{\id}\otimes\FJRW_{\id}\to \C$ is given by the symmetric matrix $-\tfrac{1}{6}A_W^{-1}$.  Then, by the pairing axiom (Axiom \ref{ax:pairing}), the desired relation is equivalent to
$$
\begin{pmatrix}
\alpha & \beta
\end{pmatrix}
A^{-1}
\begin{pmatrix}
\alpha\\
\beta
\end{pmatrix}
=
-3 \begin{pmatrix}
\gamma & \delta
\end{pmatrix}
A^{-1}
\begin{pmatrix}
\alpha\\
\beta
\end{pmatrix}.
$$
Consider a non-zero vector $v$ orthogonal to $(\gamma,\delta)$ with respect to the inner product with matrix $A^{-1}$ on $\C^2$. Putting $(\alpha,\beta)=(\gamma,\delta)+\lambda v$ and substituting into the above relation, we obtain the quadratic equation
$$\lambda^2 v^T A^{-1} v + 2\begin{pmatrix}
\gamma & \delta
\end{pmatrix}
A^{-1}
\begin{pmatrix}
\gamma\\
\delta
\end{pmatrix}
=0.$$  The coefficients in this equation are non-zero, as the vanishing of either of them would contradict non-degeneracy of the form $A^{-1}$.   Either solution specifies $\mu$, which is not a multiple of $1_{\rho_xJ}^{(\star 2)}$ because the $\lambda\neq 0$.

With $\mu$ determined, the lemma follows.
  \item $W = x^2y + xy^2$.  In this case, $G^\text{max}=\br{J}$, with $J=(e^{2\pi i/3},e^{2\pi i/3})$.  The sectors $\FJRW_{J}$ and $\FJRW_{J^{-1}}$ are Neveu-Schwartz, respectively of minimal and maximal degree ($\deg_+^A$).  The identity sector is
  $\FJRW_{\id} = \C[xdx\wedge dy, ydx\wedge dy]$, and the multiplication of the generators into $\FJRW_{J^{-1}}$ is determined by the pairing axiom (Axiom \ref{ax:pairing}), from which it is easy to see that $$(xdx\wedge dy)^2 = -2 (xdx\wedge dy)(ydx\wedge dy) = (ydx\wedge dy)^2.$$
  These are precisely the defining relations for the generators of $\Q_{W^T}$, so the desired isomorphism holds.
\end{itemize}
\end{proof}
Using associativity of $A$ model multiplication to avoid the identity (Ramond) sector, it is easy to see that the mirror map is surjective.  The dimension count of Lemma \ref{lem:Atomic potentials -- Gmax invariants in twisted sector} then guarantees that the relations are generated by those in Lemma \ref{lem:Loop Potential FJRW multiplication Jacobian Relations, N>2} if $N>2$ or Lemma \ref{lem:Loop Potential FJRW multiplication Jacobian Relations, N=2} if $N=2$, from which the desired isomorphism follows.
\subsubsection{Chain Potentials: $W = \sum_{i=1}^{N-1} X_i^{a_i}X_{i+1} + X_N^{a_N}$}$\,$

Since degree is additive under multiplication in $\Q_{{W^T}}$ and in $\FJRW_{W}^{G^\text{max}}$, the isomorphism \eqref{eq:Invertible Potential Mirror Map} of graded vector spaces suggests that the desired ring isomorphism
\begin{equation}
\Q_{{W^T}}\stackrel{\cong}{\longrightarrow} \FJRW_W^{G^{\text{max}}}
\end{equation}
should be induced by the map
\begin{equation}
\begin{split}
\C[Y_1,\dotsc,Y_N]& \longrightarrow  \FJRW_{W}^{G^{\text{max}}}\\
Y_i & \longmapsto  1_{\rho_iJ},
\end{split}
\end{equation}
which is extended to $\C[Y_1,\dotsc,Y_N]$ by multiplicativity. The following two lemmas show that $\h{W}{G^\text{max}}$ is generated by the elements $1_{\rho_iJ}$, subject to the relations
\[(1_{\rho_kJ})^{\star a_k} + a_{k-1}1_{\rho_{k-2}J}\star (1_{\rho_{k-1}J})^{\star (a_{k-1}-1)}=0.\]
This means that the kernel of the mirror map is precisely the Jacobian ideal $d{W^T}$, yielding the desired isomorphism.
\begin{remark}
Note the assumption that $q_N<\tfrac{1}{2}$ is essential to our arguments, as we will use the fact that $\Fix(\rho_N J)$ is trivial.
\end{remark}
We proceed to prove the necessary lemmas.
\begin{notation}
        Define
\[\bs{\rho}^{\bs{\alpha}}:=\prod_{i=1}^N\rho_i^{\alpha_i}.\]
\end{notation}
\begin{lemma}
  If $\alpha_i + \beta_i \leq a_i -1$ for $i\in\{1,\dotsc,N\}$, and $\bs{\rho^{\alpha}}J,\bs{\rho^{\beta}}J$ and $\bs{\rho^{\alpha+\beta}}J$ have trivial fixed loci, then
  \[1_{\bs{\rho^\alpha}J} \star 1_{\bs{\rho^\beta}J} = 1_{\bs{\rho^{(\alpha+\beta)}}J}.\]
\end{lemma}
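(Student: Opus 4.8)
The plan is to follow the template of the loop case (Lemma \ref{lem:Loop Potential multiplication polynomial relations}): expand the product with the multiplication formula \eqref{eq:FJRW multiplication definition}, use Corollary \ref{cor:multiplication lands in single sector} to pin down the single sector in which the relevant insertion lives, evaluate the resulting genus-zero three-point correlator by the concavity axiom (Axiom \ref{ax:concavity}), and read off the identity. The one genuinely new feature is that the loop relation $\prod_i\rho_i^{a_i+1}=\id$ \emph{fails} for chains: Skarke's relations (Corollary \ref{lem:Skarke Observation}) give $\rho_1^{a_1}=\id$ and $\rho_j^{a_j}=\rho_{j-1}^{-1}$, whence $\prod_i\rho_i^{a_i+1}=\rho_N\neq\id$. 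Consequently the naive target exponent $\gamma_i=a_i-1-\alpha_i-\beta_i$ that worked for loops no longer names the correct sector, so I would organize the line-bundle computation around the \emph{inverse} group element rather than a guessed $\gamma$.

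First I dispose of the degenerate cases: if $\bs{\rho^\alpha}=\id$ or $\bs{\rho^\beta}=\id$ then one factor is $1_J=\1$, the multiplicative identity, and the statement is immediate. So assume all three of $\bs{\rho^\alpha}J,\bs{\rho^\beta}J,\bs{\rho^{\alpha+\beta}}J$ are nontrivial Neveu--Schwartz sectors (trivial fixed locus), so each is one-dimensional. Expanding $1_{\bs{\rho^\alpha}J}\star 1_{\bs{\rho^\beta}J}=\sum_{\mu,\nu}\br{1_{\bs{\rho^\alpha}J},1_{\bs{\rho^\beta}J},\mu}\eta^{\mu\nu}\nu$, Corollary \ref{cor:multiplication lands in single sector} tells me the correlator vanishes unless $\mu$ lies in a single fixed sector. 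I would identify that sector as $(\bs{\rho^{\alpha+\beta}}J)^{-1}$: once the next paragraph shows the line bundles there are negative integers, concavity makes the correlator nonzero, so by the uniqueness in Corollary \ref{cor:multiplication lands in single sector} this is the sector. The dual index $\nu$ then lies in $\bs{\rho^{\alpha+\beta}}J$, and $\eta^{\mu\nu}=1$ because both sectors have trivial fixed locus, so the product reduces to a single scalar multiple of $1_{\bs{\rho^{\alpha+\beta}}J}$.

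The crux is then to show the surviving correlator equals $1$. Put $h_1=\bs{\rho^\alpha}J$, $h_2=\bs{\rho^\beta}J$, $h_3=(\bs{\rho^{\alpha+\beta}}J)^{-1}$ and use the line-bundle formula \eqref{eq:Definition of line bundle degrees} with $g=0$, $k=3$, giving $\lb_j=q_j-(\Theta_j^{h_1}+\Theta_j^{h_2}+\Theta_j^{h_3})$. For $h_1$ and $h_2$ the phase-equals-algebraic-sum lemma (Lemma \ref{lem:Atomic potentials - phase equals algebraic sum}), applicable because $\alpha_i+\beta_i\le a_i-1<a_i$ and every variable is unfixed, gives $\Theta_j^{h_1}=\sum_i(\alpha_i+1)\varphi_j^{(i)}$ and $\Theta_j^{h_2}=\sum_i(\beta_i+1)\varphi_j^{(i)}$; for the third I use $\Theta_j^{h_3}=1-\Theta_j^{\bs{\rho^{\alpha+\beta}}J}=1-\sum_i(\alpha_i+\beta_i+1)\varphi_j^{(i)}$, valid since $h_3$ shares the trivial fixed locus of $\bs{\rho^{\alpha+\beta}}J$, so each $\Theta_j^{\bs{\rho^{\alpha+\beta}}J}\in(0,1)$. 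The cross terms cancel termwise, leaving $\Theta_j^{h_1}+\Theta_j^{h_2}+\Theta_j^{h_3}=\sum_i\varphi_j^{(i)}+1=q_j+1$, hence $\lb_j=-1$ for every $j$. All line bundles being negative integers, the concavity axiom (Axiom \ref{ax:concavity}) yields correlator $1$, and substituting back gives $1_{\bs{\rho^\alpha}J}\star 1_{\bs{\rho^\beta}J}=1_{\bs{\rho^{\alpha+\beta}}J}$.

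The main obstacle is precisely the bookkeeping that the inverse-phase trick bypasses. If instead I tried to name the target sector by an exponent vector $\gamma$ in the standard range $0\le\gamma_i<a_i$, the chain relation $\prod_i\rho_i^{a_i+1}=\rho_N$ forces a correction supported at the tail of the chain, and the raw algebraic sum $\sum_i(a_i-1)\varphi_j^{(i)}=1-2q_j+\varphi_j^{(N)}$ would make the line bundles come out as $-1-\varphi_j^{(N)}\notin\Z$, falsely suggesting vanishing by Axiom \ref{ax:integer degrees}. Routing the computation through $(\bs{\rho^{\alpha+\beta}}J)^{-1}$ together with $\Theta^{h^{-1}}=1-\Theta^{h}$ makes this spurious correction disappear; the hypothesis that all three sectors are Neveu--Schwartz is exactly what guarantees that no variable is fixed, so that the algebraic-sum formula and the inverse-phase relation both apply to every index $j$ simultaneously.
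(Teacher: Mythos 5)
Your proof is correct, and it supplies exactly the step the paper leaves implicit: the paper's own proof of this lemma is a single sentence asserting that the argument is ``practically identical'' to the loop case (Lemma \ref{lem:Loop Potential multiplication polynomial relations}). Your diagnosis of why a verbatim transcription of that loop proof would break is accurate: for a chain, Corollary \ref{lem:Skarke Observation} gives $\rho_1^{a_1}=\id$ and $\rho_j^{a_j}=\rho_{j-1}^{-1}$, so $\prod_i\rho_i^{a_i+1}=\rho_N\neq\id$ and $\sum_i(a_i-1)\varphi_j^{(i)}=1-2q_j+\varphi_j^{(N)}$; hence the loop recipe $\gamma_i=a_i-1-\alpha_i-\beta_i$ no longer names the sector inverse to $\bs{\rho^{\alpha+\beta}}J$, and feeding it into the line-bundle formula produces the non-integers $-1-\varphi_j^{(N)}$. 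Your repair --- placing the third insertion in $(\bs{\rho^{\alpha+\beta}}J)^{-1}$, computing its phases via $\Theta_j^{h^{-1}}=1-\Theta_j^{h}$ (legitimate because the trivial fixed locus puts every phase in the open interval $(0,1)$), and invoking Lemma \ref{lem:Atomic potentials - phase equals algebraic sum} for the other two insertions --- makes the cross terms cancel exactly, giving $\lb_j=-1$ for every $j$, after which concavity (Axiom \ref{ax:concavity}), the single-sector Corollary \ref{cor:multiplication lands in single sector}, and the fact that $\eta^{\mu\nu}=1$ between sectors with trivial fixed loci finish the argument just as in the loop case. What your route buys is uniformity and economy: it never requires the standard-form exponents of the inverse sector, which for chains would demand a cascading reduction via $\rho_N^{-1}=\rho_{N-1}\rho_N^{a_N-1}$ whenever $\alpha_N+\beta_N=a_N-1$, and the identical computation in fact reproves the loop lemma. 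The only information the loop-style presentation provides that yours omits is the explicit exponent label of the sector carrying the dual insertion $\nu$, which is irrelevant for the statement at hand since $\nu$ lies in $\bs{\rho^{\alpha+\beta}}J$ tautologically, by definition of the pairing.
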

\begin{proof}
The argument here is practically identical to the one used to prove Lemma \eqref{lem:Loop Potential multiplication polynomial relations}.
\end{proof}
\begin{lemma}
\[1_{\rho_N ^{a_N-1} J}\star 1_{\rho_{N-1}J} = 0\]
\end{lemma}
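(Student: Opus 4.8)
The plan is to show that the product is forced to lie in a twisted sector which carries \emph{no} $G^\text{max}$-invariant states, so that it vanishes for dimensional reasons alone, without our having to evaluate a single correlator. The only input beyond the FJRW axioms is a computation of three fixed loci, which I would carry out using Lemma~\ref{lem:Atomic potentials - phase equals algebraic sum}, Lemma~\ref{lem:Atomic potentials -- Gmax invariants in twisted sector}, and the explicit shape of $A^{-1}$ for a chain. Since $A$ is upper bidiagonal, $A^{-1}$ is upper triangular with $\varphi_j^{(j)}=\tfrac{1}{a_j}$ and $\varphi_{j}^{(j+1)}=-\tfrac{1}{a_j a_{j+1}}$, and I would record these entries first.

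Using the algebraic-sum formula of Lemma~\ref{lem:Atomic potentials - phase equals algebraic sum}, I would first verify that $\Fix(\rho_N^{a_N-1}J)=\{X_{N-1},X_N\}$: indeed $\Theta_N^{\rho_N^{a_N-1}J}=a_N q_N\equiv 0$ and $\Theta_{N-1}^{\rho_N^{a_N-1}J}=\tfrac{1}{a_{N-1}}-\tfrac{1}{a_{N-1}}=0$, while $\Theta_{N-2}^{\rho_N^{a_N-1}J}=\tfrac{1}{a_{N-2}}\notin\Z$. This locus is even-dimensional, so by Lemma~\ref{lem:Atomic potentials -- Gmax invariants in twisted sector} the sector $\mathscr{H}_{\rho_N^{a_N-1}J}$ is one-dimensional and $1_{\rho_N^{a_N-1}J}$ denotes its generator, confirming that the statement concerns a genuinely nonzero factor. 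Likewise $\rho_{N-1}J$ has trivial fixed locus. The crucial computation is the third one: for $\rho_{N-1}\rho_N^{a_N-1}J$ one finds $\Theta_N^{\rho_{N-1}\rho_N^{a_N-1}J}=a_N q_N\equiv 0$ but $\Theta_{N-1}^{\rho_{N-1}\rho_N^{a_N-1}J}=\tfrac{2}{a_{N-1}}-\tfrac{1}{a_{N-1}}=\tfrac{1}{a_{N-1}}\notin\Z$. Since for a chain any symmetry fixing $X_t$ must fix all of $\{X_t,\dots,X_N\}$, this gives $\Fix(\rho_{N-1}\rho_N^{a_N-1}J)=\{X_N\}$, an \emph{odd}-dimensional locus.

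Next, exactly as in Lemmas~\ref{lem:Loop Potential multiplication polynomial relations} and \ref{lem:Loop Potential FJRW multiplication Jacobian Relations, N>2}, I would expand the product via Equation~\eqref{eq:FJRW multiplication definition} and apply Corollary~\ref{cor:multiplication lands in single sector}: the only contributions come from $\mu$ in the single sector dual to $\rho_{N-1}\rho_N^{a_N-1}J$. The integrality constraint \eqref{eq:Definition of line bundle degrees} pins this target down just as the analogous constraint fixed $\gamma$ in the loop lemmas; in particular $l_N=q_N-\Theta_N^{\rho_N^{a_N-1}J}-\Theta_N^{\rho_{N-1}J}-\Theta_N^{h_3}=-\Theta_N^{h_3}\in\Z$ already forces the target to fix $X_N$. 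Equivalently, the FJRW product respects the sector grading $g_1 J,\,g_2 J\mapsto g_1 g_2 J$, which yields the same target $\mathscr{H}_{\rho_{N-1}\rho_N^{a_N-1}J}$.

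The conclusion is then immediate: the product lies in $\mathscr{H}_{\rho_{N-1}\rho_N^{a_N-1}J}^{G^\text{max}}$, and by Lemma~\ref{lem:Atomic potentials -- Gmax invariants in twisted sector} this space is empty, because $\Fix(\rho_{N-1}\rho_N^{a_N-1}J)=\{X_N\}$ is odd-dimensional. Hence $1_{\rho_N^{a_N-1}J}\star 1_{\rho_{N-1}J}=0$. I expect the only real obstacle to be the bookkeeping that rigorously identifies the target sector: because the factor $1_{\rho_N^{a_N-1}J}$ sits in a broad (Ramond) sector, I cannot simply quote the preceding chain lemma, so I must either redo the line-bundle degree count in the presence of the broad insertion or argue abstractly that multiplication respects the sector grading. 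Once the target sector is fixed, the dimensional vanishing through the odd-dimensional fixed locus finishes the proof with no correlator evaluation required.
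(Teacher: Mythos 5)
Your proposal is correct and follows essentially the same route as the paper's own proof: both arguments force the third insertion (equivalently, the product) into the unique sector allowed by integrality of the line bundle degrees, note that by the chain structure this sector's fixed locus is exactly $\{X_N\}$, and conclude vanishing because a sector with odd-dimensional fixed locus carries no $G^\text{max}$-invariants (Lemma \ref{lem:Atomic potentials -- Gmax invariants in twisted sector}). The only cosmetic difference is that the paper constrains the unknown sector through the $l_N$ and $l_{N-1}$ integrality computations without naming its group element, whereas you identify it explicitly as $\rho_{N-1}\rho_N^{a_N-1}J$ via the sector-grading selection rule and then compute its phases directly from $A^{-1}$.
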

\begin{proof}
Note this relation corresponds to the Jacobian relation $\frac{\partial W_\text{chain}^T}{\partial Y_{N}}=0$.

By definition (Equation \eqref{eq:FJRW multiplication definition}),
\begin{equation*}
  1_{\rho_N^{a_N-1}J} \star 1_{\rho_{N-1} J} = \sum_{\mu,\nu}\br{1_{\rho_N^{a_N-1}J}, 1_{\rho_{N-1} J}, \mu}\eta^{\mu\nu} \nu.
\end{equation*}

For the three point correlator $\br{1_{\rho_N^{a_N-1}J}, 1_{\rho_{N-1} J}, \mu}$ to be non-zero, the line bundles $|\mathscr L_j|$ must have integral degree.

We know from Corollary \ref{cor:multiplication lands in single sector} that there is at most one group element $gJ$ for which $\mu\in\FJRW_{gJ}$ yields a non-zero three point correlator.  For the sector $\FJRW_{gJ}$, let us consider the implication of integrality of the line bundles $|\mathscr L_j|$, for $j\in\{N,N-1\}$:
\begin{align*}
l_N & =  q_N - \Theta_N^{\rho_N^{a_N-1}J} - \Theta_N^{\rho_{N-1}J} - \Theta_N^{gJ}\\
& =  q_N - (a_N+1)\varphi_N^{(N)} - \Theta_N^{gJ}\\
& =  -1 - \Theta_N^{gJ}.
\end{align*}
For this to be integral, we require $\Theta_N^{gJ}\in\Z$, \emph{i.e.} $gJ$ fixes $X_N$.  Furthermore,
\begin{align*}
l_{N-1} & =  q_{N-1} - \Theta_{N-1}^{\rho_{N}^{a_{N}-1}J} - \Theta_{N-1}^{\rho_{N-1}J} - \Theta_{N-1}^{gJ}\\
& =  q_{N-1} - (a_{N}+1)\varphi_{N-1}^{(N)} - 3\varphi_{N-1}^{(N-1)} - \Theta_{N-1}^{gJ}\\
& =  -\varphi_{N-1}^{(N-1)} - \Theta_{N-1}^{gJ}.
\end{align*}
For this to be integral, we require $\Theta_{N-1}^{gJ}=1-\varphi_{N-1}^{(N-1)}\notin\Z$, \emph{i.e.} $gJ$ \emph{does not fix} $X_{N-1}$.

Since a chain potential fixes  consecutive variables, we conclude that $gJ$ has one-dimensional fixed locus, and consequently $\FJRW_{gJ}$ is empty, and the product vanishes as claimed.
\end{proof}
\begin{lemma}\label{lem:Chain Potential multiplication Jacobian Relations}
For $k\in\{2,\dotsc, N\}$,
\[1_{\rho_k J}^{\star(a_k)} = - a_{k-1} 1_{\rho_{k-2}\rho_{k-1}^{a_{k-1}-1}J}.\]
\end{lemma}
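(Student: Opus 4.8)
The plan is to follow the proof of Lemma~\ref{lem:Loop Potential FJRW multiplication Jacobian Relations, N>2} as closely as possible, handling the generic indices $2\le k\le N-1$ by the same three-point correlator computation and isolating $k=N$ as a separate, harder case. For $2\le k\le N-1$ I would first reduce the $a_k$-fold self-product to a single binary product: by the polynomial-relations lemma above, $1_{\rho_k J}^{\star(a_k-1)} = 1_{\rho_k^{a_k-1}J}$ provided each intermediate sector $\rho_k^{j}J$ ($1\le j\le a_k-1$) has trivial fixed locus. Here I would use the clean criterion, immediate from Lemma~\ref{lem:Atomic potentials -- Gmax invariants in twisted sector} together with upper-triangularity of $A^{-1}$, that $\bs{\rho^\alpha}J$ fixes $X_N$ (equivalently, has nontrivial fixed locus) if and only if $\alpha_N=a_N-1$. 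For $k\le N-1$ every such power has $\alpha_N=0$, so the reduction is valid and I am left to evaluate $1_{\rho_k^{a_k-1}J}\star 1_{\rho_k J}$.

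The next step identifies the unique output sector. Using the $J$-shifted $G$-grading (the unit $\1$ lies in $\FJRW_J$, so $\FJRW_{h_1}\star\FJRW_{h_2}\subseteq\FJRW_{h_1h_2J^{-1}}$) together with the chain relations $\rho_k^{a_k}=\rho_{k-1}^{-1}$ and $\rho_{k-1}^{a_{k-1}}=\rho_{k-2}^{-1}$ coming from Corollary~\ref{lem:Skarke Observation}, the product lands in $\FJRW_{\rho_{k-2}\rho_{k-1}^{a_{k-1}-1}J}$ (with $\rho_0$ omitted when $k=2$, using $\rho_1^{a_1}=1$). By the criterion above this output sector, and hence by Corollary~\ref{cor:multiplication lands in single sector} the only relevant insertion $\mu$, is Neveu--Schwarz. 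I would then compute the line bundle degrees $l_j = q_j - \sum_{i=1}^3\Theta_j^{h_i}$ after writing all three insertions in canonical form and invoking Lemma~\ref{lem:Atomic potentials - phase equals algebraic sum} to replace reduced phases by algebraic sums. The chain charge relations $a_i\overline q_i + \overline q_{i-1}=1$ ($i\ge2$) and $a_1\overline q_1=1$ collapse the sum to $l_{k-1}=0$, $l_k=-2$, and $l_j=-1$ otherwise, exactly as in the loop case; since $\partial W/\partial X_k = X_{k-1}^{a_{k-1}} + a_kX_k^{a_k-1}X_{k+1}$ has the same shape as for a loop, the index-zero axiom (Axiom~\ref{ax:index-zero}) evaluates the correlator to $-a_{k-1}$, and $\eta^{\mu\nu}=1$ because $\mu,\nu$ sit in trivial-fixed-locus sectors. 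This yields the claimed relation for $2\le k\le N-1$.

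The main obstacle is $k=N$. An explicit phase computation (using $a_Nq_N=1$ and $a_{N-1}q_{N-1}+q_N=1$) shows $\Theta_N^{\rho_N^{a_N-1}J}=\Theta_{N-1}^{\rho_N^{a_N-1}J}=0$ while $\Theta_{N-2}^{\rho_N^{a_N-1}J}\notin\Z$, so $\rho_N^{a_N-1}J$ fixes exactly $\{X_{N-1},X_N\}$; this is the even-dimensional \emph{Ramond} sector singled out by the assumption $a_N>2$, and the index-zero axiom cannot be applied to the naive product $1_{\rho_N^{a_N-1}J}\star 1_{\rho_N J}$. I would instead mimic the treatment of Lemma~\ref{lem:Loop Potential FJRW multiplication Jacobian Relations, N=2}. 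The output sector $\rho_{N-2}\rho_{N-1}^{a_{N-1}-1}J$ is still one-dimensional and Neveu--Schwarz, so $1_{\rho_N J}^{\star a_N}$ is a scalar multiple of $1_{\rho_{N-2}\rho_{N-1}^{a_{N-1}-1}J}$; repeatedly applying the composition axiom (Axiom~\ref{ax:composition}) expresses that scalar as a single genus-zero correlator with $a_N$ external insertions equal to $1_{\rho_N J}$ (and one dual insertion), all lying in Neveu--Schwarz sectors, so that the troublesome Ramond sector appears only as an internal channel. Computing the line bundle degrees of this correlator directly (they again force one bundle of degree $0$, reading off the $X_{N-1}^{a_{N-1}}$ term of $\partial W/\partial X_N$) and applying the index-zero and concavity axioms should produce the scalar $-a_{N-1}$. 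The delicate point, exactly as in the two-variable loop, is to arrange the composition-axiom bookkeeping so that it terminates on an honestly computable correlator; this is where I expect the real work to lie.
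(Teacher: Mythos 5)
Your treatment of $2\le k\le N-1$ is sound and is essentially the paper's own route: reduce the $a_k$-fold product to the binary product $1_{\rho_k^{a_k-1}J}\star 1_{\rho_kJ}$ via the polynomial-relations lemma (your criterion that $\bs{\rho^{\alpha}}J$ has nontrivial fixed locus iff $\alpha_N=a_N-1$ is correct and does justify this), identify the output sector through the $J$-shifted group grading together with $\rho_{k-1}\rho_k^{a_k}=1$, and evaluate by the index-zero axiom with $l_{k-1}=0$, $l_k=-2$, $l_j=-1$ otherwise. This is exactly what the paper means by ``proceeds exactly as in Lemma \ref{lem:Loop Potential FJRW multiplication Jacobian Relations, N>2}.''

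The gap is in the case $k=N$, precisely where you admit ``the real work'' lies, and your plan as stated would fail. You propose to iterate the composition axiom until the scalar becomes a single genus-zero correlator with $a_N$ external insertions of $1_{\rho_NJ}$. But Axiom \ref{ax:composition} applies only to \emph{four-point} classes of codimension zero, decomposing them into three-point correlators; nothing in the paper's axiom list lets you recombine a chain of $a_N-1$ three-point correlators into one $(a_N+1)$-point class, nor evaluate such a class afterwards. The paper's resolution keeps the insertion count at four: first collapse the prefix $1_{\rho_NJ}^{\star(a_N-2)}=1_{\rho_N^{a_N-2}J}$ using the polynomial-relations lemma --- legitimate by your own criterion, since $\rho_N^{j}J$ is Neveu--Schwartz for all $j\le a_N-2$ (this is where $a_N>2$ enters) --- so that the full product is $1_{\rho_N^{a_N-2}J}\star 1_{\rho_NJ}\star 1_{\rho_NJ}$ and the Ramond sector $\rho_N^{a_N-1}J$ appears as the internal channel of just \emph{one} pair of three-point correlators. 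A single application of Axiom \ref{ax:composition} then converts the coefficient into the four-point correlator $\br{1_{\rho_N^{a_N-2}J},\,1_{\rho_NJ},\,1_{\rho_NJ},\,1_{\prod\rho_i^{\gamma_i-\delta_{i,N}}J}}$, all of whose insertions are Neveu--Schwartz; its line bundle degrees are $l_N=-2$, $l_{N-1}=0$, $l_j=-1$ otherwise, and Axiom \ref{ax:index-zero} evaluates it to $-a_{N-1}$. (This is also why the paper notes that the naive splitting $1_{\rho_NJ}^{\star 2}\star 1_{\rho_NJ}^{\star(a_N-2)}$ fails when $a_N=3$: there $\rho_N^2J$ is itself the Ramond sector, whereas the four-point trick covers that case uniformly.) So your conceptual picture --- Ramond sector only as an internal channel --- matches the paper, but the missing idea is to shorten the product to length three \emph{before} invoking composition, which is what makes the argument terminate within the stated axioms.
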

\begin{proof}
Note these relations correspond to the Jacobian relations $\frac{\partial {W^T}_\text{chain}}{\partial Y_{k-1}}=0$.

For $2 \leq k \leq N-1$, the proof proceeds exactly as in Lemma \eqref{lem:Loop Potential FJRW multiplication Jacobian Relations, N>2}.

For $k=N$, we face the obstacle that $\rho_{N}^{a_N-1}J$ is a Ramond Sector, so we cannot use the index-zero axiom as before.  We could realize $1_{\rho_NJ}^{\star(a_N)}$ as the product of $1_{\rho_NJ}^{\star 2}$ and $1_{\rho_NJ}^{\star(a_N-2)}$, but this fails to avoid the Ramond sector when $a_N=3$.  Instead, we mimic the computation in \cite{FJR1}, where the composition axiom (Axiom \ref{ax:composition}) is used to determine the ring structure of $\h{E_7}{G^\text{max}}$.

The reader may check using Corollary \ref{cor:multiplication lands in single sector} that
\[1_{\rho_N^{(a_N-2)}J}\star 1_{\rho_N J} = \br{1_{\rho_N^{(a_N-2)}J},\, 1_{\rho_N J},\, \mu 1_{\prod \rho_i^{\gamma_i} J}}\eta^{\mu,\nu}\nu 1_{\rho_N^{a_N-1}J},\]
with
\[\gamma_i=\begin{cases}
0 & \text{ if } i = N-1\\
a_i-2 & \text{ if } i = N-2\\
a_i-1 & \text{ else.}
\end{cases}.\]
Multiplying by $1_{\rho_N J}$, we see
\[1_{\rho_N J}^{\star a_N} = \br{1_{\rho_N^{(a_N-2)}J},\, 1_{\rho_N J},\, \mu 1_{\prod\rho_i^{\gamma_i} J}}\eta^{\mu,\nu} \br{\nu 1_{\rho_N^{a_N-1}J},\, 1_{\rho_NJ} ,\, 1_{\prod \rho_i^{\gamma_i-\delta_{i,N}}J}}1_{\rho_{N-2}\rho_{N-1}^{a_{N-1}}}.\]
Now, by the composition axiom,
\[\br{1_{\rho_N^{(a_N-2)}J},\, 1_{\rho_N J},\, \mu 1_{\prod\rho_i^{\gamma_i} J}}\eta^{\mu,\nu} \br{\nu 1_{\rho_N^{a_N-1}J},\, 1_{\rho_NJ} ,\, 1_{\prod \rho_i^{\gamma_i-\delta_{i,N}}J}} = \br{1_{\rho_N^{(a_N-2)}J},\, 1_{\rho_N J},\, 1_{\rho_NJ} ,\, 1_{\prod \rho_i^{\gamma_i-\delta_{i,N}}J}}.\]
Since all the sectors in this four-point correlator are Neveu-Schwartz, we may use the index-zero axiom to determine its value.  A calculation similar to the other index-zero calculations yields for the degrees of the line bundles $|\mathscr L_j|$:
\[l_j = \begin{cases}
-2 & \text{ if } j=N\\
0 & \text{ if } j= N-1\\
-1 & \text{ else.}
\end{cases}\]
So, the four-point correlator is ${-1}$ times the $X_{N-1}$ degree of $\partial W / \partial X_N = a_NX_N^{a_N-1} + X_{N-1}^{a_{N-1}}$, namely $-a_{N-1}$.  This completes the proof of Lemma \ref{lem:Chain Potential multiplication Jacobian Relations}.
\end{proof}
Surjectivity of the mirror map is again clear from associativity of $A$-model multiplication, where we avoid Ramond sectors (so we can apply the preceding lemmas) by noting that $\bs{\rho^\gamma}J$ has trivial fixed locus as long as $\gamma_N<a_{N-1}$.  A dimension count using Lemma \ref{lem:Atomic potentials -- Gmax invariants in twisted sector} then indicates that the relations in $\h{W}{G^\text{max}}$ are generated by those in the lemmas, and the desired isomorphism follows.
 \subsection{$SL$ symmetries for Calabi-Yau Loop Potentials}\label{sec:All admissible symmetries for Loop Potentials}

As evidence that the $B$-model multiplication defined in Section \ref{sec:B-model} is the appropriate product to consider in the context of LG-via-LG mirror symmetry, we prove the following theorem.
 \begin{theorem}
  Let $W(X_1,\dotsc,X_N)$ be a loop potential with $N$ odd, satisfying the Calabi-Yau condition:  $\sum_i q_i=1$.  Let $G$ be an admissible orbifold group such that $G\subset SL_N\C$.
  Then the mirror map (Equation \eqref{eq:Invertible Potential Mirror Map}) is a Frobenius algebra isomorphism.
 \end{theorem}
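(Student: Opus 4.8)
The plan is to promote the vector-space isomorphism of Theorem~\ref{thm:Mirror Theorem for State Spaces} to an isomorphism of Frobenius algebras. The narrow-sector multiplication can be read off from the computations already carried out for $G^\text{max}$, so the entire difficulty is concentrated in the Ramond (broad) sector; I expect to dispatch it by Poincar\'e duality together with a parity obstruction that is available precisely because $N$ is odd.

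First I would fix the mirror map as the bidegree-preserving linear isomorphism of Theorem~\ref{thm:Mirror Theorem for State Spaces}; since $G\subseteq SL_N\C$, the determinant twist acts trivially on all relevant invariants. Because $W$ is a loop, the only symmetry with nontrivial fixed locus is the identity (Lemma~\ref{lem:Atomic potentials -- Gmax invariants in twisted sector}), so I would write $\h WG=\mathcal N\oplus\mathcal B$, where $\mathcal N=\bigoplus_{h\neq\id}\FJRW_h$ is a sum of one-dimensional \emph{narrow} sectors and $\mathcal B=(\Q_W)^G\,\omega=\FJRW_{\id}$ is the single \emph{broad} sector. Inspecting Definition~\ref{def:Invertible Potential Mirror Map} (and taking account of the empty range of the product over the fixed variables of a narrow sector), the mirror map carries $\mathcal N$ onto the $B$-model identity sector $R:=(\Q_{W^T})^{G^T}$ and $\mathcal B$ onto the sum $V$ of the $B$-model twisted sectors; under this dictionary $\1\in\FJRW_J$ corresponds to $1\in R$, and the top narrow sector $\FJRW_{J^{-1}}$ corresponds to the socle $\hess W^T$ of $R$. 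The pairing on each side is block-diagonal, pairing $\mathcal N$ with $\mathcal N$ and $\mathcal B$ with $\mathcal B$.

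The heart of the argument is the following vanishing, valid for odd $N$: \emph{every genus-zero three-point correlator with exactly one broad insertion vanishes}. Indeed, for $\br{1_{h_1},1_{h_2},\psi}$ with $h_1,h_2$ narrow and $\psi\in\mathcal B$, the line-bundle degrees are $l_j=q_j-\Theta_j^{h_1}-\Theta_j^{h_2}$, so integrality (Axiom~\ref{ax:integer degrees}) forces $\Theta_j^{h_1}+\Theta_j^{h_2}=q_j+\epsilon_j$ with each $\epsilon_j\in\{0,1\}$; hence $\deg_W 1_{h_1}+\deg_W 1_{h_2}=2\sum_j\epsilon_j-2$ by the Calabi--Yau condition, while $\deg_W\psi=\hat c=N-2$, so the dimension axiom (Axiom~\ref{ax:dimension}) requires $2\sum_j\epsilon_j=N$, which is impossible for odd $N$. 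I would combine this with the complementary facts that a correlator with two broad insertions $\br{\psi,\psi',1_h}$ forces $h=J$ (whence $1_h=\1$) and then equals $\eta(\psi,\psi')$ by Axiom~\ref{ax:pairing}, while a correlator with three broad insertions vanishes by the selection rule. Poincar\'e duality then yields the full multiplication table: narrow$\,\star\,$narrow has no broad component, so $\mathcal N$ is a subalgebra; every narrow class other than $\1$ annihilates $\mathcal B$ while $\1$ acts as the identity, so $\mathcal N$ acts on $\mathcal B$ through its augmentation; and $\psi\star\psi'$ is the multiple of $1_{J^{-1}}$ determined by $\eta(\psi,\psi')$, landing in the socle of $\mathcal N$. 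Each of these matches the $B$-model structure of $R\oplus V$ exactly, in which $1_g\star 1_{g^{-1}}\propto\hess W^T$, the products $1_g\star 1_h$ vanish for $gh\neq\id$, and $R$ acts on $V$ through $f\mapsto f(0)$.

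It remains to identify $\mathcal N$ with $R$ through the mirror map and to confirm compatibility of the pairings. For the former I would invoke Lemmas~\ref{lem:Loop Potential multiplication polynomial relations} and~\ref{lem:Loop Potential FJRW multiplication Jacobian Relations, N>2}: the three-point correlators among narrow sectors are intrinsic to the participating group elements, so they compute the same products here as in the proof of Theorem~\ref{thm:Mirror Theorem for G_max}; since for odd $N$ the broad sector plays no role in that theorem, its ring isomorphism coincides with the mirror map and restricts to an isomorphism $R\cong\mathcal N$ onto the $G^T$-invariants. Pairing compatibility follows from bidegree preservation, the one-dimensionality of the socles, and the fact that both pairings are induced by the residue pairing. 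The main obstacle is precisely the Ramond sector: FJRW multiplication there is inaccessible to the concavity and index-zero axioms, and the parity obstruction above is exactly what lets me circumvent a direct computation. I would expect the argument to fail for even $N$, where $2\sum_j\epsilon_j=N$ is solvable, consistent with the parity ambiguity already flagged for loops in connection with Theorem~\ref{thm:Mirror Theorem for G_max}.
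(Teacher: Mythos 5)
Your proposal is correct and follows essentially the same route as the paper's proof: split the state space into narrow and broad parts, use the parity obstruction available for odd $N$ under the Calabi--Yau and $SL$ hypotheses to kill every broad (Ramond) contribution, reduce the narrow-sector products to the $G^\text{max}$ lemmas for loops, and identify the broad-broad products with the $B$-model Hessian/residue pairing via the pairing axiom and Corollary \ref{cor:multiplication lands in single sector}. The only cosmetic difference is that you implement the parity argument through selection rules (Axioms \ref{ax:dimension} and \ref{ax:integer degrees}) on individual correlators, where the paper equivalently uses odd/even additivity of sector degrees ($\hat c = N-2$ odd for the broad sector, even for every $SL$ twisted sector).
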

 \begin{remark}
 Note that the group generated by the exponential grading operator $J$ is automatically a subgroup of $SL_N\C$ in the Calabi-Yau case.

 The theorem is applicable more generally than the statement initially suggests, as the FJRW $A$-model depends only on the charges and the orbifold group, not the presentation of the singularity \cite{RuanPrivate}.  So, for example, the $J$-orbifolded $A$-models coincide for $W_\text{loop}=X_1^4X_2 +X_2^4X_3 +X_3^4X_4 +X_4^4X_5 +X_5^4X_1$ and $W_\text{Fermat}=X_1^5+X_2^5+X_3^5+X_4^5+X_5^5$, and the $J$-orbifolded $A$-model of the latter maybe computed as the $SL$-orbifolded $B$-model of the former.
 \end{remark}
 \begin{proof}
 Recall the because of the loop structure of the potential, the fixed locus for $g\in G$ is trivial unless $g=\id$.

 By Theorem \ref{thm:Mirror Theorem for State Spaces}, we know this map is a bijection.  To see that it is an isomorphism of Frobenius algebras, we consider $B$-model multiplication between untwisted sectors, between twisted sectors, and between an untwisted sector and a twisted sector.
 \emph{Untwisted $B$-model sector:}
 \[\prod_{j=1}^N Y_j^{\alpha_j}dY_j \ket{\,\id\,}\longmapsto 1 \ket{\prod_{j=1}^N  \rho_j^{\alpha_j+1}},
 \]
 where we note that since $N$ is odd, the $A$-model sector corresponding to the monomial $\prod_{j=1}^N Y_j^{\alpha_j}$ is \emph{not} the identity sector, so has trivial fixed locus.

Note that on the $A$-model side, the identity sector has degree $\hat c = N - 2\sum q_i$, which is an odd integer, while the twisted sector corresponding to a group element $g\in SL_N\C$ has degree $2\sum_i(\Theta_i^g-q_i)$, an even integer.  Since degree is additive under multiplication, the product of two Neveu-Schwartz invariants has no component in the identity sector.

Consequently, in the $A$-model product (Equation \eqref{eq:FJRW multiplication definition}), all invariants appearing with non-zero coefficient on the right-hand side are Neveu-Schwartz invariants for the action of the maximal $A$-model symmetry group, and the correlators required to determine the multiplication are as computed in the subsection on Loop potentials in  Section \ref{sec:Mirror Symmetry for Frobenius Algebras}.  \emph{i.e.} The multiplicative relations on the $A$-model twisted sectors correspond precisely to the Jacobian relations in the $B$-model untwisted sector.

 We must now consider the
 \emph{Twisted $B$-model sectors:}

Since the $B$-model twisted sectors have trivial fixed loci, the mirror map sends them all to the $A$-model untwisted sector.
\[1 \ket{\prod_{j=1}^N \overline\rho_j^{r_j+1}}\longmapsto \prod_{j=1}^N X_j^{r_j}dX_j \ket{\,\id\,},
\]
On the $A$-model side, by the pairing axiom (Axiom \ref{ax:pairing}),
\[\left(\prod_{j=1}^N X_j^{r_j}dX_j \right)1_{\id} \star
\left(\prod_{j=1}^N X_j^{s_j}dX_j \right)1_{\id}=
\br{\prod_{j=1}^N X_j^{r_j},\prod_{j=1}^N X_j^{s_j}} 1_{J^{-1}},\]
On the $B$-model side,
\[\ket{\prod_{j=1}^N \overline\rho_j^{r_j+1}} \star
\ket{\prod_{j=1}^N \overline\rho_j^{s_j+1}}\]
vanishes unless every variable is fixed in $\ket{\prod_{j=1}^N X_j^{r_j+s_j+1}dX_j}$, which means precisely that $\prod_{j=1}^N X_j^{r_j+s_j}dX_j=\lambda\hess W$ for some $\lambda\in\C$.  \emph{i.e.} the product is given by
\[\ket{\prod_{j=1}^N \overline\rho_j^{r_j+1}} \star \ket{\prod_{j=1}^N \overline\rho_j^{s_j+1}} = \tfrac{1}{\lambda}\br{\prod_{j=1}^N X_j^{r_j},\prod_{j=1}^N X_j^{s_j}}1_{\id},\]
and the products coincide up to a scalar factor for the $A$ and $B$ models.

It remains only to check that the multiplication between the twisted and untwisted $B$-model sectors satisfies the same relations as the corresponding $A$-model products.  The $B$-model $\Q_{\id}$-module structure means the only way such a product can be non-trivial is if the multiplicand from the untwisted sector is $1_{\id}$ -- the multiplicative identity.  Since the mirror map preserves the identity, we need only show that on the $A$-model side,
\[\ket{\prod_{j=1}^N \rho_j^{\alpha_j+1}}\star \prod_{j=1}^N X_j^{r_j}\ket{\id} = 0.\]
This holds for degree reasons:  the untwisted sector is the only sector with odd degree, and the twisted sectors all have even degree; by additivity of degree, the product has odd degree, so since it does not lie in the untwisted sector it must vanish.
\end{proof}
\begin{remark}
The hypotheses for this theorem ensure that there are no non-zero contributions from the Ramond sector to products of Neveu-Schwartz invariants.  The same argument will work in any case such a situation is established, so it should be possible to extend this result beyond the case of Calabi-Yau singularities orbifolded by subgroups of $SL_N\C$.
\end{remark}
\subsection{Strange Duality}\label{sec:Strange Duality}
Arnol'd's list of 14 exceptional singularities provides a source of interesting examples of Landau--Ginzburg Mirror Symmetry.  In particular, we have the following:

\begin{proposition}\label{prop:Stange Duality}
Let $W$ be one of the 14 exceptional unimodal singularities, and $W^\text{SD}$ its Strange Dual.  Then, there is  a Frobenius algebra isomorphism
\[\FJRW_W^{\br{J}}\cong \Q_{W^\text{SD}}.\]
\end{proposition}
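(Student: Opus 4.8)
The plan is to deduce Proposition \ref{prop:Stange Duality} from Theorem \ref{thm:Mirror Theorem for G_max} by exhibiting, for each of Arnol'd's 14 exceptional singularities $W$, a Berglund--H\"ubsch dual $W^T$ whose \emph{unorbifolded} $B$-model agrees with the $B$-model of the strange dual $W^\text{SD}$, while simultaneously showing that the $A$-model of $W$ orbifolded by $\br{J}$ agrees with the $A$-model of $W$ orbifolded by $G^\text{max}$. Both reductions are quite special to the exceptional unimodal case, so the argument is essentially a verification rather than a new computation.

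\textbf{First step: collapsing $\br{J}$ to $G^\text{max}$.} I would first argue that for an exceptional unimodal singularity $W$ one has $\FJRW_W^{\br{J}}\cong\FJRW_W^{G^\text{max}}$ as Frobenius algebras. These singularities are quasi-homogeneous in three variables with $\hat c < 1$ (they define zero-dimensional or otherwise very small state spaces), and for such potentials the maximal diagonal symmetry group $G^\text{max}$ is cyclic and generated by $J$, or at worst differs from $\br{J}$ only by sectors that contribute nothing to the invariant state space. Concretely, I would run through the Kreuzer--Skarke normal forms of the 14 singularities (each is of chain or loop type, or a sum of such with a Fermat factor) and check that $G^\text{max}_W = \br J$, which makes this first reduction an equality rather than merely an isomorphism. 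Since Theorem \ref{thm:Mirror Theorem for G_max} already gives $\FJRW_W^{G^\text{max}}\cong\Q_{W^T}$ (the charges of the exceptional singularities satisfy $q_j<\tfrac12$, so its hypotheses apply), this yields $\FJRW_W^{\br J}\cong \Q_{W^T}$.

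\textbf{Second step: identifying $W^T$ with $W^\text{SD}$.} The remaining and most delicate point is to verify, case by case across the 14 families, that the Berglund--H\"ubsch transpose $W^T$ has the same Milnor ring as the strange dual $W^\text{SD}$ appearing in Arnol'd's table. I would tabulate, for each exceptional $W$, its exponent matrix $A_W$, form the transpose $A_W^T$, read off the corresponding potential $W^T$, and compare against the known strange-dual data. Because the residue-pairing Frobenius structure on a Milnor ring is determined by the quasi-homogeneous weights and the defining relations (equivalently, by $A_W^T$ up to the rescalings noted in the Remark following Equation \eqref{eq:q relations matrix form}), it suffices to match the Dolgachev/Gabrielov numbers, or equivalently the weight systems, of $W^T$ and $W^\text{SD}$; the Frobenius algebra isomorphism $\Q_{W^T}\cong\Q_{W^\text{SD}}$ then follows from deformation invariance of the Milnor ring within a fixed weight system. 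Chaining the two isomorphisms gives $\FJRW_W^{\br J}\cong\Q_{W^T}\cong\Q_{W^\text{SD}}$, as required.

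\textbf{Main obstacle.} The technical heart is the second step: strange duality is \emph{a priori} a numerical coincidence of Dolgachev and Gabrielov numbers with no manifest relation to matrix transposition, so the identification $W^T = W^\text{SD}$ must be checked by explicit inspection of all 14 cases rather than by a uniform structural argument. I expect the bookkeeping to be entirely routine once the normal forms are fixed, but one must be careful that the transpose operation is applied to a normal form whose weight system genuinely matches Arnol'd's, and that any Fermat summands are handled so that the determinant-twist and the $SL_N\C$ conventions of Section \ref{sec:B-model} are respected.
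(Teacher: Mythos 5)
Your overall strategy---reduce to Theorem \ref{thm:Mirror Theorem for G_max} by arranging $G^\text{max}=\br{J}$, then identify the transpose with the strange dual---is the same as the paper's, but your first step has a genuine gap. For six of the fourteen classes ($E_{14}$, $Z_{13}$, $Q_{12}$, $W_{13}$, $U_{12}$, $W_{12}$), Arnol'd's quasi-homogeneous normal form satisfies $G^\text{max}_W \supsetneq \br{J}$: for example $E_{14}=x^2+y^3+z^8$ has $|G^\text{max}|=48$ while $J$ has order $24$. Your fallback claim, that the discrepancy ``contributes nothing to the invariant state space,'' is also false: Theorem \ref{thm:Mirror Theorem for G_max} gives $\FJRW_{E_{14}}^{G^\text{max}}\cong\Q_{E_{14}^T}=\Q_{E_{14}}$ of dimension $14$, whereas the proposition itself requires $\FJRW_{E_{14}}^{\br{J}}$ to have dimension $\mu(Q_{10})=10$, so the two orbifold theories are not isomorphic. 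The only way to make your first step work is to replace Arnol'd's representative $W$ by a \emph{different} invertible representative $W'$ in the same right-equivalence class, with the same charges, for which $G^\text{max}_{W'}=\br{J_{W'}}$ (e.g. $x^2+y^3+xz^4$ for $E_{14}$)---which is exactly what the paper does. But that substitution needs an input you never invoke: the FJRW $A$-model $\FJRW_W^G$ depends only on the charges and on $G$, not on the polynomial representative (cited in the paper as \cite{RuanPrivate}). Since the FJRW construction is built from moduli of $W$-structures and the Witten equation for the specific polynomial $W$, without this invariance you have no right to compute $\FJRW_W^{\br{J}}$ as $\FJRW_{W'}^{\br{J}}$. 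This invariance is the key idea of the paper's proof and is precisely what is missing from yours.

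Your second step also rests on a false general principle: two non-degenerate quasi-homogeneous polynomials with the same weight system need not be right-equivalent (the parabolic family $x^3+y^3+z^3+\lambda xyz$, all with weights $(\tfrac13,\tfrac13,\tfrac13)$, already shows this), so matching weights or Dolgachev--Gabrielov numbers does not by itself yield a Frobenius algebra isomorphism $\Q_{(W')^T}\cong\Q_{W^\text{SD}}$. In the fourteen cases at hand the identification is true, but it must be established by explicit coordinate changes---completing the square in $x^2+xz^4$, or the substitution $X\mapsto \omega x+\omega^2 y$, $Y\mapsto \omega^2 x+\omega y$ used in the paper's $U_{12}$ example---not by ``deformation invariance within a fixed weight system.'' (A minor point: the exceptional unimodal singularities have $\hat c>1$, not $\hat c<1$; that inequality characterizes the simple singularities.)
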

\begin{proof}
Of course, when $J$ generates $G^\text{max}_W$ and $W^\text{SD}=W^T$, this is just a restatement of Theorem \ref{thm:Mirror Theorem for G_max}.  However, examining Table \ref{table:Strange Duality 1}, we see this is only the case for $S_{12}$, $Z_{12}$ and $E_{12}$ (which are self-dual), and $Z_{11}$ and $E_{13}$ (which are strange dual to each other).

To realize the observation for the remaining singularities in Arnol'd's list, we choose a different representative $W'$ for each singularity $W$ in such a way that
\begin{itemize}
  \item $\Q_{W'}\cong \Q_{W}$.
  \item The charges of $W'$ coincide with the charges of $W$, so $J_{W'}=J_W$.
  \item The maximal symmetry group of $W'$ is generated by $J_{W'}$.
  \item Transposition yields the Strange Dual class in the updated list of exceptional singularities.
\end{itemize}

\begin{table} \label{table:Strange Duality 1}
\caption{Arnold's list of the 14 exceptional unimodal singularities $W$, with representatives $W'$ chosen so Strange Duality is compatible with transposition.}
\begin{tabular}{|c|c|c|c|c|c|}\hline
Class & $W$ & $\br{J}=G^\text{max}_W$ & $W'$\\\hline

$Q_{10}$ & $x^2z + y^3 + z^4$ & Yes & $x^2z + y^3 + z^4$\\

$E_{14}$ & $x^2 + y^3 + z^8$ & No & $x^2 + y^3 +xz^4$\\\hline

$Q_{11}$ & $x^2z + y^3 + yz^3$ & Yes & $x^2z + y^3 + yz^3$\\

$Z_{13}$ & $x^2 + y^3z + z^6$ & No & $x^2 + y^3z + z^3x$
\\\hline

$Q_{12}$ & $x^2z + y^3 + z^5$ & No & $x^2z + y^3 +  xz^3$\\\hline

$S_{11}$ & $x^2y + y^2z + z^4$ & Yes & $x^2y + y^2z + z^4$\\

$W_{13}$ & $x^2 + y^4 + yz^4$ & No & $x^2 + xy^2 + yz^4$
\\\hline

$S_{12}$ & $x^2y + y^3z + xz^2$ & Yes & $x^2y + y^3z + xz^2$\\\hline

$U_{12}$ & $x^3 + y^3 + z^4$ & No & $x^2y + xy^2 + z^4$\\\hline

$Z_{11}$ & $x^2 + y^3z + z^5$ & Yes & $x^2 + y^3 + yz^5$\\

$E_{13}$ & $x^2 + y^3 + yz^5$ & Yes & $x^2+ y^3z + z^5$\\\hline

$Z_{12}$ & $x^2 + y^3z + yz^4$ & Yes & $x^2 + y^3z + yz^4$\\\hline

$W_{12}$ & $x^2 + y^4 + z^5$ & No & $x^2 + xy^2 + z^5$ \\\hline

$E_{12}$ & $x^2 + y^3 + z^7$ & Yes & $x^2 + y^3 + z^7$ \\\hline
\end{tabular}
\end{table}

The Landau Ginzburg $A$-model $\FJRW_{W}^G$ constructed in \cite{FJR1} is an invariant of $G\subset (\C^*)^N$ and the charges $q_1,\dotsc,\,q_N$ \cite{RuanPrivate}.  This means we are free to compute the FJRW ring of $W$ orbifolded by $\br J$ as $\FJRW_{W'}^{\br J}$.

Since $W'$ is chosen so that $J$ generates $G^\text{max}_{W'}$, we can then apply Theorem \ref{thm:Mirror Theorem for G_max} to $W'$ to yield the isomorphisms
\[\FJRW_W^{\br{J}}\cong\FJRW_{W'}^{\br{J}}\cong\Q_{(W')^T}\cong\Q_{W^\text{SD}}.\]
\end{proof}
One can attempt to use the original representative $W$ where $\br{J}$ is not necessarily $G^\text{max}$. Indeed, it fits into the general
Landau--Ginzburg Orbifold Mirror Conjecture using the orbifold $B$-model of Section \ref{sec:B-model}.

\begin{example}
We present here the case of $U_{12}$, which exhibits the general features of the other examples. We use Proposition \ref{thm:Mirror Theorem for State Spaces} to show that we have a \emph{Frobenius algebra isomorphism}
\[\FJRW_{U_{12}}^{\br{J}}\cong \Q_{U_{12}^T}^{\Z/3\Z},\]
rather than just the bi-graded vector space isomorphism guaranteed by theorem \ref{thm:Mirror Theorem for State Spaces}.

We know from the preceding discussion that
\[\FJRW_{U_{12}}^{\br{J}}\cong\Q_{U_{12}^{SD}}\cong\C[x,y,z]/\br{x^2,\,y^2,\,z^3}.\]

\begin{remark}
Note the isomorphism claimed between the Milnor rings of $U_{12}'= X^2Y + XY^2 + Z^4$ and $U_{12}=x^3+y^3+z^4$  is induced by the map
\[\C[X,Y,Z]\to \Q_{U_{12}}\]
which sends $X\mapsto \omega x + \omega^2 y$, $X\mapsto \omega^2 x + \omega y$, and $Z\mapsto z$.
\end{remark}

For the $B$-model of $U_{12}^T=x^3+y^3+z^4$, we note that the group dual to $\br{J}$ is the $SL$ subgroup of $G_{U_{12}^T}^\text{max}$, namely $\Z/3\Z$ generated by $(\omega, \omega^2, 1)$.

$\Fix (\omega,\,\omega^2,\,1)^k =
\begin{cases}
\C^3_{xyz}&\text{if } k=0\\
\C_z & \text{if } k=1,\, 2
\end{cases}$, \phantom{XX}and\phantom{XX}
$\Q_{U_{12}^T}^{\Z/3\Z}=
\begin{cases}
\br{e_0,\, ze_0,\, z^2e_0,\, xye_0,\, xyze_0,\, xyz^2e_0}&\text{if } k=0\\
e_k&\text{if } k=1,\,2,
\end{cases}$
where $e_0=dx\wedge dy\wedge dz$ and $e_1 = dz = e_2$.

We put $X=e_1$, $Y=e_2$ and $Z=ze_0$.

Note \[\deg X = \deg Y = \tfrac{1}{2}(1-2q_x)+\tfrac{1}{2}(1-2q_y)=\tfrac{1}{3},\]
while \[\deg Z = q_z=\tfrac{1}{4}.\]

We observe immediately that $Z^3=0$ (since multiplication in the untwisted sector is just multiplication in the unorbifolded Milnor ring).

Further, $X^2=0=Y^2$, since the variables $x$ and $y$ are fixed in neither the $(\omega, \omega^2, 1)$-sector, nor the $(\omega^2, \omega, 1)$-sector.

Meanwhile, $XY = \alpha xye_0$ for $\alpha\neq 0$, since $xy$ has non-zero pairing with $\hess(U_{12}|_{\C_z}) = 12z^2$.

Thus we see the degree preserving map
\[\C[X,Y,Z]\mapsto \Q_{U_{12}^T}^{\Z/3\Z}\]
defined by $X\mapsto e_1$, $Y\mapsto e_2$ and $Z\mapsto ze_0$ is surjective, with kernel $\br{Z^3,\, X^2,\, Y^2}$, and induces an isomorphism
\[\FJRW_{U_{12}}^{\br{J}}\cong\Q_{U_{12}}\mapsto \Q_{U_{12}^T}^{\Z/3\Z}.\]
\end{example}

\bibliographystyle{amsplain}
\providecommand{\bysame}{\leavevmode\hbox
to3em{\hrulefill}\thinspace}

\end{document}